\let\oldmarginpar\marginpar
\renewcommand\marginpar[1]
\newcommand{\la}{\langle}
\newcommand{\ra}{\rangle}
\newtheorem{theorem}{\bf Theorem}[section]
\newtheorem{lemma}[theorem]{\bf Lemma}
\newtheorem{corollary}[theorem]{\bf Corollary}
\newtheorem{conjecture}[theorem]{\bf Conjecture}
\newcommand{\CC}{{\Bbb C}}
\newcommand{\CP}{{\Bbb CP}}
\newcommand{\NN}{{\Bbb N}}
\newcommand{\QQ}{{\Bbb Q}}
\newcommand{\RR}{{\Bbb R}}
\newcommand{\ZZ}{{\Bbb Z}}
\newcommand{\ggreat}{>\kern-.7ex>}
\newcommand{\ssmall}{<\kern-.7ex<}
\newcommand{\qu}{/\kern-.7ex/}
\newcommand{\exh}{\to\kern-1.8ex\to}
\newcommand{\hH}{{\EuScript{H}}}
\newcommand{\kK}{{\EuScript{K}}}
\newcommand{\lL}{{\EuScript{L}}}
\newcommand{\pP}{{\EuScript{P}}}
\newcommand{\uU}{{\EuScript{U}}}
\newcommand{\vV}{{\EuScript{V}}}
\newcommand{\GL}{\operatorname{GL}}
\newcommand{\Aut}{\operatorname{Aut}}
\newcommand{\const}{\operatorname{const}}
\newcommand{\Coker}{\operatorname{Coker}}
\newcommand{\GCD}{\operatorname{GCD}}
\newcommand{\Hom}{\operatorname{Hom}}
\newcommand{\Homeo}{\operatorname{Homeo}}
\newcommand{\Id}{\operatorname{Id}}
\newcommand{\Ker}{\operatorname{Ker}}
\newcommand{\Map}{\operatorname{Map}}
\newcommand{\Min}{\operatorname{Min}}
\renewcommand{\O}{\operatorname{O}}
\newcommand{\Tor}{\operatorname{Tor}}
\newcommand{\ov}{\overline}
\newcommand{\ord}{\operatorname{ord}}
\newcommand{\discsym}{\operatorname{disc-sym}}
\newcommand{\PD}{\operatorname{PD}}
\title[Finite abelian group actions on WLS manifolds]
{Finite abelian group actions on weakly Lefschetz cohomologically symplectic manifolds}
\author{Ignasi Mundet i Riera}
\address{Facultat de Matem\`atiques i Inform\`atica\\
Universitat de Barcelona\\
Gran Via de les Corts Catalanes 585\\
08007 Barcelona, Spain}
\address{Centre de Recerca Matem\`atica \\
Campus de Bellaterra, Edifici C \\
08193 Barcelona, Spain}
\email{ignasi.mundet@ub.edu}
\date{\today}
\subjclass[2010]{57S17,54H15}
\thanks{This research was partially supported by the Spanish Ministeri de Ci\`encia i Innovaci\'o,
through the grant PID2023-147642NB-I00, and by the Spanish State Research Agency, through the
Severo Ochoa and Mar\'{\i}a de Maeztu Program for Centers and Units of Excellence in
R\&D (CEX2020-001084-M)}
\begin{document}

\maketitle

\begin{abstract}
We study finite abelian group actions on {\it weakly Lefschetz cohomologically symplectic} (WLS) manifolds, a collection of
manifolds that includes all compact connected Kaehler manifolds.

We prove that
for any WLS manifold $X$ there exists a number $C$ such that, for any integer $m\geq C$,
if $(\ZZ/m)^k$ acts freely on $X$, then $\sum_j b_j(X;\QQ)\geq 2^k$.

We also prove a structure theorem for effective actions on WLS manifolds of $(\ZZ/p)^r$, where $p$ is a big enough prime,
analogous to some results for tori of Lupton and Oprea \cite{LO},
and we find bounds on the discrete degree of symmetry of WLS manifolds.

Our technique, which may be of independent interest, is based on studying the cohomology of abelian covers of WLS manifolds
$X$ associated to certain maps $\pi:X\to T^k$. We prove that, in the presence of actions of arbitrarily big finite abelian groups,
some of these abelian covers have finitely generated cohomology, and the spectral sequence associated to $\pi$ degenerates at the second page over the rationals.
\end{abstract}

\section{Introduction}

\newcommand{\discrk}{\operatorname{disc-rk}}

Let us say that a closed,
connected and orientable $n$-dimensional topological manifold $X$ is {\it weakly Lefschetz
cohomologically symplectic} (WLS, for short) if there exists a class $\Omega\in H^2(X;\RR)$ such that:
\begin{enumerate}
\item[(W1)] there exist classes $\alpha_1,\dots,\alpha_r\in H^1(X;\RR)$ such that
$\alpha_1\smile\dots\smile\alpha_r\smile\Omega^k\neq 0$, where $r+2k=n$ and $k\geq 0$,
\item[(W2)] for every nonzero $\alpha\in H^1(X;\RR)$ there exist classes
$\beta_1,\dots,\beta_s\in H^1(X;\RR)$ such that
$\alpha\smile\beta_1\smile\dots\smile\beta_s\smile\Omega^t\neq 0$, where $1+s+2t=n$
and $s\geq 0$.
\end{enumerate}
We call any such $\Omega$ a WLS class of $X$. If $\Omega=0$, condition (W1) above
implies that $k=0$, so that $X$ is rationally hypertoral in the sense of \cite{Mu2021}. In this case,
all the results in this paper follow, without using (W2), from the results in \cite{Mu2021} (except for
Theorem \ref{thm:main-big-prime-p} below, which also needs \cite[Theorem 1.6]{Mu2022}).
So the reader can assume from now on that all WLS classes are implicitly nonzero.

By Poincar\'e duality, $\Omega\in H^2(X;\RR)$ is a WLS class if and only if
the image of the cup product map
$\Lambda^*H^1(X;\RR)\otimes_{\RR}\RR[\Omega]\to H^*(X;\RR)$
contains $H^{n-1}(X;\RR)\oplus H^n(X;\RR)$.

The aim of this paper is to study effective actions\footnote{All group actions in this paper
are implicitly assumed to be continuous.} of finite abelian groups on WLS manifolds.

The collection of WLS manifolds contains all $c$-symplectic manifolds of Lefschetz type,
as defined by Lupton and Oprea \cite{LO}: these are the closed, connected, $2m$-dimensional manifolds $X$
admitting a class $\Omega\in H^2(X;\RR)$ such that $\Omega^{m}\neq 0$,
and such that cup product with $\Omega^{m-1}$ gives an isomorphism from $H^1(X;\RR)$ to $H^{2m-1}(X;\RR)$.
Indeed, any such $\Omega$ is a WLS class.
By the Lefschetz decomposition theorem (see. e.g. \cite[Chap. V, Corollary 4.13]{Wells}),
any compact connected Kaehler manifold is $c$-symplectic of Lefschetz type, with $\Omega$
equal to the cohomology class of the symplectic form. Hence, any compact connected Kaehler manifold
is a WLS manifold.

WLS are not necessarily even dimensional:
any compact co-Kaehler manifold is WLS, see \cite[Theorem 2.1, Lemma 2.3]{BLO}.

The Cartesian product of two WLS manifolds is again WLS. Indeed, if $X_1,X_2$ support WLS classes, say $\Omega_1,\Omega_2$
respectively, then $\Omega_1+\Omega_2\in H^2(X_1\times X_2;\RR)$ is a WLS class (we omit the pullback maps induced by the projections $X_1\times X_2\to X_i$). Indeed, suppose that
$r_i,t_i$ are nonnegative integers such that $r_i+2t_i=\dim X_i$, and that $\alpha_{1,i},\dots,\alpha_{r_i,i}\in H^1(X_i;\RR)$.
Then, elementary dimension considerations imply that
\begin{multline*}
\left(\alpha_{1,1}\smile\dots\smile\alpha_{r_1,1}\smile\Omega_1^{t_1}\right)\smile
\left(\alpha_{1,2}\smile\dots\smile\alpha_{r_2,2}\smile\Omega_2^{t_2}\right)= \\ =\frac{t_1!t_2!}{(t_1+t_2)!}\cdot
\alpha_{1,1}\smile\dots\smile\alpha_{r_1,1}\smile
\alpha_{1,2}\smile\dots\smile\alpha_{r_2,2}\smile(\Omega_1+\Omega_2)^{t_1+t_2}
\end{multline*}
(again, pullbacks omitted). This formula can be applied to prove that
$\Omega_1+\Omega_2$ satisfies both properties (W1) and (W2).

Throughout this paper, $H^*(\cdot)$ denotes integral cohomology.
Denote by $T^k$ the $k$-dimensional torus $(S^1)^k$.

\subsection{A weak version of Carlsson's toral rank conjecture for WLS manifolds}

The following conjecture was proposed by G. Carlson in \cite[Conjecture I.3]{Carlsson1986} and
in \cite{Carlsson1987}.

\begin{conjecture}
\label{conj:CTRC}
Let $p$ be a prime and let $G=(\ZZ/p)^k$. If $X$ is a finite free $G$-complex, then
$\sum_i\dim_{\ZZ/p}H_i(X,\ZZ/p)\geq 2^k$.
\end{conjecture}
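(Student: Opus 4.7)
The plan is to work via equivariant cohomology and the Borel construction. Let $G=(\ZZ/p)^k$; then $H^*(BG;\ZZ/p)$ is, as a graded algebra, a polynomial ring on $k$ degree-one generators when $p=2$, and a polynomial ring on $k$ degree-two generators tensored with an exterior algebra on $k$ degree-one generators when $p$ is odd. Since the $G$-action on the finite complex $X$ is assumed free, the Borel construction $X_G:=EG\times_G X$ is homotopy equivalent to $X/G$ and is therefore itself a finite-dimensional complex. In particular $H^*_G(X;\ZZ/p)$ is a finite-dimensional $\ZZ/p$-vector space.

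Next I would exploit the Serre spectral sequence of the Borel fibration $X\hookrightarrow X_G\to BG$, whose $E_2$ page is $H^*(BG;\ZZ/p)\otimes H^*(X;\ZZ/p)$ and which converges to $H^*_G(X;\ZZ/p)$. Because the target is finite-dimensional while the polynomial part of $H^*(BG;\ZZ/p)$ is infinite-dimensional, every class in the polynomial subalgebra must eventually die through some differential; this places severe constraints on the differentials and on the cohomology of $X$. The conjecture would follow if one could show that each of the $k$ polynomial generators of $H^*(BG;\ZZ/p)$ is responsible for a multiplicative factor of $2$ in the total dimension $\sum_i\dim_{\ZZ/p}H^i(X;\ZZ/p)$.

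A natural route is induction on $k$. The base case $k=1$ is classical: a free $\ZZ/p$-action on a finite complex forces the mod-$p$ Euler characteristic of $X$ to vanish, which together with finite-dimensionality of $X_G$ gives $\sum_i\dim_{\ZZ/p}H^i(X;\ZZ/p)\geq 2$. For the inductive step, one would restrict to a subgroup $G'\subset G$ of index $p$, apply the inductive hypothesis to the free $G'$-action on $X$ to obtain a lower bound of $2^{k-1}$, and then combine this with a $k=1$-type argument for the residual free $\ZZ/p$-action on $X/G'$, for instance through the Lyndon--Hochschild--Serre spectral sequence of $1\to G'\to G\to \ZZ/p\to 1$ applied to equivariant cohomology, or through Carlsson's chain-level reformulation over $\ZZ/p[G]$.

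The main obstacle, and the reason the conjecture remains open in general, is precisely this inductive step: $X/G'$ is not a free $\ZZ/p$-complex on which the inductive hypothesis applies, and the interaction between the cohomology of $X$, of $X/G'$, and of the various spectral sequences is notoriously hard to control; no doubling mechanism at each stage of the induction is known to work in full generality. In the WLS setting of this paper, additional manifold-theoretic rigidity and the presence of the WLS class $\Omega$ are what make a version of this doubling work rationally, by forcing degeneration of the spectral sequence of appropriate maps $X\to T^k$; this yields the weaker statement announced in the abstract rather than Conjecture~\ref{conj:CTRC} itself.
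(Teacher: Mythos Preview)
The statement you were asked to prove is Conjecture~\ref{conj:CTRC}, and it is presented in the paper precisely as an open conjecture, not as a theorem. The paper offers no proof of it; Carlsson's toral rank conjecture remains open in general, and the paper only establishes a weakened form (Theorem~\ref{thm:toral-rank-Carlsson} and Corollary~\ref{cor:toral-rank-Carlsson}) for WLS manifolds and for sufficiently large primes. You recognize this explicitly in your final paragraph, so in that sense your assessment is accurate: there is no proof in the paper to compare against, and your proposal is not a proof either but a sketch of the standard equivariant-cohomology heuristics together with an honest acknowledgment of where they break down.

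Two small corrections to your sketch. First, the statement as written in the paper omits the word ``free'', but Carlsson's conjecture is about free $G$-CW-complexes; without freeness the inequality is trivially false (take $G$ acting trivially on a point). You silently inserted this hypothesis, which is the intended reading. Second, your argument for $k=1$ is slightly off: vanishing of the mod-$p$ Euler characteristic only says that the alternating sum of the mod-$p$ Betti numbers is divisible by $p$, not that their ordinary sum is at least $2$. The correct $k=1$ argument is exactly the spectral-sequence observation you make just before: since $H^*(B\ZZ/p;\ZZ/p)$ is infinite-dimensional and $H^*_G(X;\ZZ/p)\cong H^*(X/G;\ZZ/p)$ is finite-dimensional, some differential must be nonzero, forcing $\dim_{\ZZ/p}H^*(X;\ZZ/p)\geq 2$. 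As you say, it is the inductive step that nobody knows how to complete, and the paper's contribution is orthogonal to this line of attack: rather than working with $H^*(BG;\ZZ/p)$, it produces a cohomologically injective map $X\to T^k$ by analyzing abelian covers of $X$ and their monodromy.
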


In this paper we refer to this conjecture as Carlsson's toral rank conjecture. The analogous conjecture,
in which $(\ZZ/p)^k$ is replaced by $T^k$, "free" is replaced by "with finite stabilizers",
and $\dim_{\ZZ/p}H_i(X,\ZZ/p)$ is replaced by $\dim_{\QQ}H_i(X,\QQ)$, was also proposed by
Carlsson in \cite[Conjecture I.3]{Carlsson1986} and by S. Halperin (the conjecture appears
at least in two papers by Halperin, with different hypothesis: in \cite[\S 8]{Halperin1987},
for smooth actions on compact differentiable manifolds, and in \cite[Problem 1.4]{Halperin1985},
for actions on {\it reasonable} simply connected spaces). We refer to it as Halperin's toral
rank conjecture. (See also \cite[Conjecture 4.4.1]{AP}.)

A number of partial results on Carlsson's toral rank conjecture have been proved in the past,
but, except for some general results on low dimensional manifolds, all of them refer
exclusively to products of spheres or manifolds closely related to them.
For actions on products of spheres, with restrictions on the dimensions, there are partial
results by A. Adem, W. Browder, G. Carlsson, D.H. Gottlieb, O.B. Okutan and E. Yal\c{c}{\i}n, see
\cite{AB,Carlsson1980,Carlsson1982,Carlsson1987,Gottlieb,OY}.
For actions of $(\ZZ/2)^k$ on products of Dold manifolds (which are quotients of $S^m\times\CP^n$ by a suitable involution), it has been recently proved by P. Dey in \cite{Dey}. For products of
equidimensional lens spaces, it was proved by Yal\c{c}{\i}n in \cite{Yalcin}.
For products of real projective spaces, see the recent paper by Yal\c{c}{\i}n \cite{Yalcin2025}.
For free actions of $(\ZZ/2)^k$ whose orbit space is a small cover, it was proved by L. Yu
in \cite{Yu1}, and in \cite{Yu2} for actions of $(\ZZ/2)^k$ on manifolds of dimension at most $3$.

B. Hanke proved in \cite{Hanke} a weak version of Carlsson's conjecture for an arbitrary product of spheres
$X=S^{n_1}\times\dots\times S^{n_r}$: namely, for every prime $p>3\dim X$, if there is a free action
of $(\ZZ/p)^k$ on $X$ we have $k\leq r_o$, where $r_o$ denotes the cardinal of $\{i\mid n_i\text{ is odd}\}$.

Our first result implies a weak version of Carlsson's toral rank conjecture for WLS manifolds.
To be precise, what we prove is weaker than the original conjecture, because it does not apply
to actions of $(\ZZ/p)^k$ for small primes $p$,
but it is more general, in the sense that $(\ZZ/p)^k$ is replaced by $(\ZZ/p^e)^k$,
with $p^e$ big enough, and that the action may have nontrivial isotropy groups, provided
they are small enough.

\begin{theorem}
\label{thm:toral-rank-Carlsson}
Let $X$ be a WLS manifold. For each number $C$ there is a number
$C'$, depending on $C$ and $X$, such that, for any prime power $p^e\geq C'$ and any action
of $(\ZZ/p^e)^k$ on $X$ all of whose isotropy groups have at most $C$ elements,
there is a continuous map $X\to T^k$ inducing an injective morphism
$\phi^*:H^*(T^k)\to H^*(X)$.
\end{theorem}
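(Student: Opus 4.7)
The plan is to extract from the $(\ZZ/p^e)^k$-action a map $\phi:X\to T^k$ coming from $k$ rationally independent classes in $H^1(X;\ZZ)$, and then to deduce the injectivity of $\phi^*$ from the paper's two main technical results---finite generation of $H^*(\wt X;\QQ)$ for the associated abelian cover and degeneration at $E_2$ of the Leray-Serre spectral sequence of $\phi$---combined with the WLS conditions (W1) and (W2).

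First I would reduce to a free action. Every isotropy subgroup of $G=(\ZZ/p^e)^k$ has order at most $C$ and lies in $G[p^{j_0}]$, where $p^{j_0}$ is the smallest power of $p$ exceeding $C$, so the subgroup $G':=p^{j_0}G\cong(\ZZ/p^{e-j_0})^k$ meets every isotropy trivially and acts freely on $X$; if $C'$ is large, $p^{e-j_0}$ is still arbitrarily large. A further bounded-index reduction, using that the irreducible $\QQ$-representations of $\ZZ/p^f$ have dimensions $\varphi(p^f)$ that grow unboundedly with $f$, lets us additionally assume that $G'$ acts trivially on $H^*(X;\QQ)$. Setting $Y:=X/G'$, the quotient map $\pi_1(Y)\twoheadrightarrow G'$ produces $k$ $\ZZ/p^{e-j_0}$-independent characters $\chi_1,\dots,\chi_k\in H^1(Y;\ZZ/p^{e-j_0})$. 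Combining this with the universal coefficient theorem, and a bound on the $p$-torsion of $H^2(Y;\ZZ)$ (controllable in terms of $X$ via the Leray-Serre spectral sequence of the covering $X\to Y$), forces $b_1(Y;\QQ)\geq k$, and hence $b_1(X;\QQ)\geq k$ by transfer. One then picks $k$ rationally independent integral classes $\alpha_1,\dots,\alpha_k\in H^1(X;\ZZ)$ and lets $\phi:X\to T^k$ be the associated classifying map.

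The crux is verifying that $\phi^*:H^*(T^k)\to H^*(X)$ is injective. Because $H^*(T^k)$ is a torsion-free exterior algebra on degree-one generators, rational injectivity implies integral injectivity and itself reduces to showing $\alpha_1\smile\dots\smile\alpha_k\neq 0$ in $H^k(X;\QQ)$. Here the $\alpha_i$ should be chosen so that the associated abelian cover $\wt X\to X$, with deck group $\ZZ^k$, has finitely generated rational cohomology; this is made possible by the large free $G'$-action, which lifts to $\wt X$ and yields the required finiteness. Once this is in place, the paper's degeneration result---that (W2) forces the Leray-Serre spectral sequence $E_2^{p,q}=H^p(T^k;\QQ)\otimes_\QQ H^q(\wt X;\QQ)\Rightarrow H^{p+q}(X;\QQ)$ to collapse at $E_2$, while (W1) ensures that the top row $E_\infty^{k,*}$ is nonzero---gives the injectivity of the edge morphism $\phi^*$.

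The main obstacle is arranging the $\alpha_i$ so that both the finite generation of $H^*(\wt X;\QQ)$ and the $E_2$-degeneration hold simultaneously; this is exactly where the technical content of the paper is concentrated. The reduction to a free action, the character-counting argument giving $b_1(X;\QQ)\geq k$, and the passage from rational to integral coefficients should all be comparatively routine.
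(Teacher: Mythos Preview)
Your step 4 contains a genuine gap. You claim the $p$-torsion of $H^2(Y;\ZZ)$ can be bounded in terms of $X$ alone, and hence that the $k$ independent characters in $H^1(Y;\ZZ/p^{e-j_0})$ force $b_1(Y;\QQ)\geq k$. This fails already for $X=(S^3)^k$: it carries free $(\ZZ/p)^k$-actions for every prime $p$, yet $b_1(X;\QQ)=0$. The Cartan--Leray spectral sequence of $X\to Y$ feeds $H^2(G';\ZZ)\cong(\ZZ/p^{e-j_0})^k$ into $H^2(Y;\ZZ)$, so the torsion you need to control grows with $p^e$ rather than being bounded by $X$. The WLS hypothesis is therefore essential \emph{before} you can produce the classes $\alpha_i$, not only at the end; in your outline it first appears in step~8, which is too late.

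The paper's argument is organised differently. The central object is the rotation morphism $\rho:G\to T_{H^1(X)}$; Theorem~\ref{thm:fixed-points-kernel-rotation-morphism} (this is where WLS is used) shows that a cyclic $p$-subgroup with trivial rotation morphism must have a point of large isotropy, so the bound $|G_x|\leq C$ forces $|\Ker\rho|$ to be bounded. This is exactly the input to Theorem~\ref{thm:main-rotation-morphism}, which then produces a subgroup $K\leq H^1(X)$ with $\psi_K^*:H^*(T_K)\to H^*(X)$ injective and with $|\Ker(r_K\circ\rho)|$ bounded; the latter bound, together with $p^e$ large, forces $\dim T_K\geq k$. Note also that your step~8 misattributes the mechanism: the degeneration at $E_2$ is not a consequence of (W2), but of the existence of the large group actions (this is the content of Theorem~\ref{thm:main-rotation-morphism}); and the lift of the $G'$-action to $\wt X$ requires the $\alpha_i$ to come from the rotation morphism, not to be chosen arbitrarily among independent classes in $H^1(X)$.
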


Leaving aside the possible presence of nontrivial isotropy groups this result
has a similar flavour to that of Hanke in \cite{Hanke}. However, our technique is very different.
While Hanke uses tame homotopy theory, our result is based on studying the monodromy action
on the cohomology of certain appropriately chosen abelian covers of $X$
(see Subsection \ref{ss:intro-rotation-morphism} below).

Theorem \ref{thm:toral-rank-Carlsson} immediately implies the following:

\begin{corollary}
\label{cor:toral-rank-Carlsson}
Let $X$ be a WLS manifold. There is a number $C$, depending only on $X$, such that for any
integer $m\geq C$ and any free action of $(\ZZ/m)^k$ on $X$ we have
$\sum_j b_j(X;\QQ)\geq 2^k$.
\end{corollary}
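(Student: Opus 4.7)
The plan is to reduce the corollary directly to Theorem \ref{thm:toral-rank-Carlsson}, using an elementary number-theoretic pigeonhole to bridge between general $(\ZZ/m)^k$ actions and prime-power actions $(\ZZ/p^e)^k$.

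First I would invoke Theorem \ref{thm:toral-rank-Carlsson} with isotropy bound $1$ (free actions have trivial isotropy), obtaining a threshold $C'$, depending only on $X$, such that whenever $(\ZZ/p^e)^k$ acts freely on $X$ with $p^e\geq C'$ there is a continuous map $\phi\colon X\to T^k$ whose induced morphism $\phi^*\colon H^*(T^k)\to H^*(X)$ is injective.

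Next I would set $C:=(C')^{C'}$ and observe that every integer $m\geq C$ admits some prime-power divisor $p^e$ with $p^e\geq C'$: otherwise every prime-power factor of $m$ would be strictly less than $C'$, the number of distinct primes dividing $m$ would be at most $\pi(C')\leq C'$, and $m$ would be bounded by $(C')^{C'}=C$, contradicting $m\geq C$. Given a free action of $(\ZZ/m)^k$ on $X$ with $m\geq C$, the unique subgroup of order $p^e$ in each factor $\ZZ/m$ assembles into a subgroup $(\ZZ/p^e)^k\leq(\ZZ/m)^k$ whose restricted action on $X$ is again free, so Theorem \ref{thm:toral-rank-Carlsson} supplies $\phi\colon X\to T^k$ with $\phi^*$ injective on integral cohomology.

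Finally, because $H^*(T^k;\ZZ)$ is free abelian, injectivity of $\phi^*$ on integer cohomology is preserved after tensoring with $\QQ$; hence $b_j(X;\QQ)\geq b_j(T^k;\QQ)$ in every degree, and summing while using that $H^*(T^k;\QQ)$ is an exterior algebra on $k$ degree-one generators gives $\sum_j b_j(X;\QQ)\geq \sum_j b_j(T^k;\QQ)=2^k$. The only substantive input is Theorem \ref{thm:toral-rank-Carlsson}: once that is in hand, nothing in the corollary itself looks hard, and I would not expect a genuine obstacle here—any reasonable bound on the largest prime-power divisor of $m$ will do, and $C=(C')^{C'}$ is crude but sufficient.
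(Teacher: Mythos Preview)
Your proposal is correct and matches the paper's approach: the paper simply states that the corollary follows immediately from Theorem~\ref{thm:toral-rank-Carlsson} and gives no further details, and your argument (pass to a prime-power subgroup $(\ZZ/p^e)^k$ with $p^e\geq C'$, apply the theorem, then read off the Betti number bound from injectivity of $\phi^*$) is exactly the intended derivation.
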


The conclusion in Theorem \ref{thm:toral-rank-Carlsson} is stronger than that of
the previous corollary, because it asserts that the bound $\sum_j b_j(X;\QQ)\geq 2^k$
has a geometric origin, namely a continuous map $X\to T^k$ inducing an injection in
cohomology. It is an intriguing question to understand for which manifolds $X$ the existence
of a free action of $(\ZZ/p)^k$, with $p$ big enough, implies the existence of
a cohomologically injective continuous map $X\to T^k$. This is certainly not true for all manifolds
$X$. For example, $(S^3)^k$ supports a free action of
$(\ZZ/p)^k$ for every $p$, but since $(S^3)^k$ is simply connected there cannot be any
cohomologically injective map $(S^3)^k\to T^k$.

Corollary \ref{cor:toral-rank-Carlsson} implies that WLS manifolds
satisfy Halperin's toral tank conjecture:
if $T^k$ acts on a WLS with finite stabilizers, restricting the action to
the $p$-torsion of $T^k$, for big enough $p$, we get a free action of
$(\ZZ/p)^k$ on $X$. However, for the more restricted family of $c$-symplectic
manifolds of Lefschetz type, Halperin's toral rank conjecture can be proved
much more directly, see the comments after Theorem \ref{thm:LO} below.


\subsection{Actions of elementary abelian $p$-groups, with $p$ big}
G. Lupton and J. Oprea proved in \cite{LO} several results on continuous torus actions on $c$-symplectic
manifolds of Lefschetz type, generalizing previously known results on symplectic torus actions on
symplectic manifolds as in \cite{Ginzburg}.
The following result combines \cite[Theorem 5.2]{LO},
\cite[Corollary 5.6]{LO}, and \cite[Corollary 5.15]{LO}.

\begin{theorem}
\label{thm:LO}
Let $X$ be a $c$-symplectic manifold of Lefschetz type. For any effective action
of a torus $T^r$ on $X$ there is a splitting $T^r\simeq T^k\times T^{r-k}$ such that:
\begin{enumerate}
\item $T^k$ acts on $X$ with finite stabilizers,
and there is a ring isomorphism $H^*(X;\QQ)\simeq H^*(X/T^k;\QQ)\otimes H^*(T^k;\QQ)$,
\item the action of $T^{r-k}$ on $X$ has nonempty fixed point set.
\end{enumerate}
\end{theorem}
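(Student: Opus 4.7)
The strategy is to define the splitting $T^r\simeq T^k\times T^{r-k}$ from the cohomology of the orbit map and then to read off the two stated properties. Fix $x_0\in X$ and consider the orbit map $\iota:T^r\to X$, $t\mapsto t\cdot x_0$. Because $T^r$ is connected, the pullback $\iota^*:H^1(X;\QQ)\to H^1(T^r;\QQ)$ is independent of $x_0$. Let $V\subseteq H^1(T^r;\QQ)$ be its image, set $k=\dim V$, and choose a splitting $T^r\simeq T^k\times T^{r-k}$ so that the projection $T^r\to T^k$ pulls $H^1(T^k;\QQ)$ back to $V$. Pick classes $\alpha_1,\dots,\alpha_k\in H^1(X;\QQ)$ whose images under $\iota^*$ form a basis of $V$.

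With this setup, the finiteness of $T^k$-stabilizers is essentially formal. If $T^k$ had a positive-dimensional isotropy subtorus $S$ at some point $y$, then the orbit map $T^k\to X$ based at $y$ would factor through $T^k/S$; since the induced map $H^1(X;\QQ)\to H^1(T^k;\QQ)$ is independent of the basepoint, its image would lie in $H^1(T^k/S;\QQ)\subsetneq H^1(T^k;\QQ)$, contradicting surjectivity. For the ring isomorphism I would analyze the Serre spectral sequence of the orbit fibration $T^k\to X\to X/T^k$ (rationally, the orbifold quotient causes no trouble; the monodromy on $H^*(T^k;\QQ)$ is trivial because the $\alpha_i$ are globally defined). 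Its $E_2$ page is $H^*(X/T^k;\QQ)\otimes H^*(T^k;\QQ)$, and because the $\alpha_i$ provide global lifts of a basis of $H^1(T^k;\QQ)$, the transgression $d_2$ vanishes on $H^1(T^k;\QQ)$; by multiplicativity the spectral sequence collapses at $E_2$. The subalgebra of $H^*(X;\QQ)$ generated by $\alpha_1,\dots,\alpha_k$ is exterior by graded commutativity, so it is isomorphic to $H^*(T^k;\QQ)$ and splits the Serre filtration multiplicatively, yielding the ring isomorphism.

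The Lefschetz hypothesis enters decisively in part (2). By construction the orbit map of $T^{r-k}$ induces the zero map on $H^1$, i.e., every $T^{r-k}$-orbit is homologically trivial in $X$. For a general torus action this need not force a fixed point, but on a c-symplectic manifold of Lefschetz type one can use the Lefschetz isomorphism $\smile\Omega^{m-1}:H^1(X;\QQ)\to H^{2m-1}(X;\QQ)$ together with equivariant Borel localization for $T^{r-k}$ to rule out $X^{T^{r-k}}=\emptyset$: were $X^{T^{r-k}}$ empty, $H^*_{T^{r-k}}(X;\QQ)$ would be torsion over $H^*(BT^{r-k};\QQ)$, but the equivariant lift of $\Omega$ together with the Lefschetz property would produce a non-torsion equivariant class, a contradiction. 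I expect this step to be the main obstacle: the first two parts are essentially formal consequences of the orbit map being cohomologically maximal, whereas producing the actual fixed point genuinely requires the Lefschetz hypothesis, and the cleanest route is either a Sullivan-model argument as in Lupton--Oprea, or a direct equivariant-cohomology computation exploiting Lefschetz duality.
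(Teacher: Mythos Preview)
The paper does not prove this theorem; it is quoted as background and attributed to Lupton and Oprea (combining \cite[Theorem 5.2, Corollary 5.6, Corollary 5.15]{LO}), so there is no in-paper argument to compare your proposal against directly.

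Your outline for part~(1) is essentially correct and close in spirit to the standard argument. The splitting is dictated by the image of the orbit map on $H^1$; almost-freeness of the $T^k$-factor follows from basepoint-independence of that image; and once the $\alpha_i$ restrict to a basis of $H^1$ of the fibre, multiplicativity forces collapse of the Serre spectral sequence and yields the ring isomorphism. The only hand-wave is ``rationally, the orbifold quotient causes no trouble'': this is true, but you should say why (e.g.\ replace $X/T^k$ by the Borel construction $X_{T^k}$, to which it is rationally equivalent because all stabilizers are finite). Lupton and Oprea phrase all of this through Sullivan minimal models, but the content is the same.

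Part~(2) has a genuine gap. You correctly observe that $X^{T^{r-k}}=\emptyset$ would force $H^*_{T^{r-k}}(X;\QQ)$ to be torsion over $H^*(BT^{r-k};\QQ)$, and you propose to contradict this via an equivariant lift $\tilde\Omega$ of $\Omega$. But you never establish that $\tilde\Omega$ exists, and this is exactly the crux. Homological triviality of the $T^{r-k}$-orbits gives $d_2|_{H^1(X;\QQ)}=0$ in the Borel spectral sequence, but since $H^*(X;\QQ)$ is not generated in degree one this says nothing about $d_2(\Omega)\in H^2(BT^{r-k})\otimes H^1(X;\QQ)$, and even $d_2(\Omega)=0$ would still leave higher differentials to control. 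The Lefschetz isomorphism $\smile\Omega^{m-1}\colon H^1\to H^{2m-1}$ must be used substantively at this point, and you have only gestured at it. Lupton and Oprea's actual argument goes through the rational Gottlieb group and minimal models: for $c$-symplectic manifolds of Lefschetz type they show that the evaluation subgroup injects into $H_1(X;\QQ)$, and deduce the fixed-point statement from that. You name this route in your last sentence, which is the right instinct, but the equivariant-cohomology sketch as written does not close the argument.
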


The first statement above implies that if a torus $T^k$ acts effectively
and with finite stabilizers on a $c$-symplectic manifold $X$ of Lefschetz type then
$\sum_j b_j(X;\QQ)\geq 2^k$, which gives an affirmative answer, for such
manifolds, to Halperin's toral rank conjecture.
This was also independently proved by C. Allday, see Remark 1.6.6 in \cite{AMMS}.
Halperin's conjecture had earlier been proved for spaces with the rational
cohomology of a compact Kaehler manifold by C. Allday and V. Puppe \cite{AP0}.
For co-Kaehler manifolds, it has been more recently proved by
G. Bazzonni, G. Lupton and J. Oprea \cite{BLO}.
L. Zoller wrote recently a very informative and up to date survey on Halperin's
toral rank conjecture, see \cite{Zoller}.

Lupton and Oprea prove in \cite{LO} that the isomorphism in the first statement
of Theorem \ref{thm:LO} has a geometric origin: $X$ is in fact $T^k$-equivariantly homeomorphic to
$Y\times_FT^k$, where $F$ is a finite subgroup of $T^k$, and where $T^k$ acts
on the second factor of $Y\times_FT^k$ by translations and trivially on the first
factor. A similar statement has been proved for holomorphic actions on Kaehler manifolds
(which are particular cases of $c$-symplectic manifolds of Lefschetz type)
by J. Amor\'os, M. Manjar\'{\i}n and M. Nicolau, see \cite[Corollary 7.12]{AMN}.


Our second result is the following analogue of Theorem \ref{thm:LO} for actions of elementary $p$-groups on WLS manifolds
(see \cite[Chap I]{Bo} for the definition of $\ZZ/p$-cohomology manifold and
of the dimension of a $\ZZ/p$-cohomology manifold):

\begin{theorem}
\label{thm:main-big-prime-p}
Let $X$ be a WLS manifold. There is a number $C$, depending only on $X$, with the following property.
Let $p\geq C$ be a prime number, and suppose that $G:=(\ZZ/p)^r$ acts effectively on $X$. Then there is a
splitting $G\simeq G_0\times G_1$, a monomorphism $G_1\hookrightarrow T^k$,
and a map $\xi:X\to T^k$,
such that:
\begin{enumerate}
\item $\xi$ is $G_1$-equivariant (so $G_1$ acts freely on $X$),
\item there is a finite covering $\pi:X'\to X$, a $\ZZ^k$-covering $X''\to X$,
and an isomorphism of graded vector spaces
$\sigma:H^*(X';\QQ)\to H^*(X'';\QQ)\otimes H^*(T^k;\QQ)$
such that the composition of the maps
$$H^*(T^k;\QQ)\stackrel{\xi^*}{\longrightarrow} H^*(X;\QQ)\stackrel{\pi^*}{\longrightarrow}
H^*(X';\QQ) \stackrel{\sigma}{\longrightarrow}H^*(X'';\QQ)\otimes H^*(T^k;\QQ)$$
sends $H^*(T^k;\QQ)$ isomorphically onto $1\otimes H^*(T^k;\QQ)$,
\item the action of $G_0$ on $X$ has nonempty fixed point set, and
each connected component of $X^{G_0}$ is a $\ZZ/p$-cohomology manifold
of dimension $\geq k$.
\end{enumerate}
\end{theorem}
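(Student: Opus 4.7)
The strategy parallels the Lupton--Oprea argument for tori (Theorem \ref{thm:LO}), with Theorem \ref{thm:toral-rank-Carlsson} and the rotation-morphism machinery announced in the abstract playing the role of Borel localization in the torus case. I would split the proof into three tasks: constructing $\xi$ and the splitting $G = G_0 \times G_1$, building the covers $X'$, $X''$ and the isomorphism $\sigma$, and analyzing $X^{G_0}$.

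\textbf{Construction of $\xi$ and the splitting.} Since $G = (\ZZ/p)^r$ is elementary abelian with $p$ large, I would pick $G_1 \subseteq G$ to be an $\FF_p$-subgroup of maximal rank $k$ acting freely on $X$; applying Theorem \ref{thm:toral-rank-Carlsson} to this free action (with isotropy bound $C = 1$) gives a continuous map $\xi_0 : X \to T^k$ inducing an injection on rational cohomology. The rotation-morphism construction promised in the abstract refines $\xi_0$ to a $G_1$-equivariant map $\xi : X \to T^k$ through an explicit monomorphism $G_1 \hookrightarrow T^k$; $G_1$-equivariance then forces freeness, consistent with the maximality choice. I would take $G_0$ to be an $\FF_p$-complement of $G_1$ chosen to contain a maximal-rank isotropy subgroup of the $G$-action, which guarantees $X^{G_0} \neq \emptyset$ through an iterated Smith-inequality argument on a tower of subgroups between $0$ and $G_0$.

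\textbf{The covers and the cohomology isomorphism.} Define $X''$ as the pullback of the universal cover $\RR^k \to T^k$ along $\xi$, which is a regular $\ZZ^k$-cover of $X$. For $X'$ I would take a finite regular cover of $X$ that trivializes the monodromy of the Leray--Serre spectral sequence of $\xi$ on the $\QQ$-cohomology of the fiber (equivalently, of $X''$). The essential inputs are the two technical results advertised in the abstract: for maps $\pi : X \to T^k$ arising from actions of large finite abelian groups on WLS manifolds, the monodromy on fiber cohomology has finite image, and the associated spectral sequence degenerates at the $E_2$-page over $\QQ$. Once monodromy is trivialized and the sequence collapses, Leray--Hirsch yields the graded-vector-space isomorphism $\sigma : H^*(X';\QQ) \cong H^*(X'';\QQ) \otimes H^*(T^k;\QQ)$, and the claimed compatibility with $\xi^*$ and $\pi^*$ follows because $\xi^*H^*(T^k;\QQ)$ is precisely the row $p = 0$ in the collapsed spectral sequence of $\xi \circ \pi$.

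\textbf{Fixed-point analysis and main obstacle.} That each component of $X^{G_0}$ is a $\ZZ/p$-cohomology manifold is classical Smith theory for elementary abelian $p$-groups acting on $\ZZ/p$-cohomology manifolds. For the dimension bound $\dim \geq k$, I would argue as follows: since $G_0$ lies in the kernel of the rotation morphism $G \to T^k$, the image of $\xi^*: H^*(T^k;\ZZ/p) \to H^*(X;\ZZ/p)$ is $G_0$-invariant, and Borel localization forces the image of the top class $[T^k] \in H^k(T^k;\ZZ/p)$ to survive nontrivially under the restriction $H^*(X;\ZZ/p) \to H^*(X^{G_0};\ZZ/p)$, yielding $\dim X^{G_0} \geq k$. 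The main obstacle is Step 2: establishing both the finiteness of monodromy and the $E_2$-degeneration over $\QQ$ of the spectral sequence of $\xi$. These are precisely the paper's main technical contribution; they rely essentially on the WLS conditions (W1)--(W2) and on the fact that $p$ can be taken arbitrarily large, and without them the tensor product decomposition underlying $\sigma$ would not be available.
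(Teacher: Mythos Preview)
Your approach to parts (1) and (2) is essentially the paper's: the map $\xi$ comes from Theorem~\ref{thm:rotation-morphism}, the cover $X''$ is the abelian cover $X_K$, and $\sigma$ is extracted from the spectral-sequence degeneration in Theorem~\ref{thm:main-rotation-morphism}. One difference worth noting: the paper defines $G_0 := \Ker\rho$ \emph{first} and takes $G_1$ to be any complement, whereas you fix $G_1$ as a maximal-rank free subgroup and then choose $G_0$ as a complement containing a maximal isotropy group. For $p$ large these have the same rank (every isotropy group lies in $\Ker\rho$, since $\phi(gx)=\rho(g)+\phi(x)$ forces $\rho(g)=0$ whenever $gx=x$), but your $G_0$ need not \emph{equal} $\Ker\rho$, and this creates real problems in part~(3).

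There are two genuine gaps in your treatment of~(3). First, an ``iterated Smith-inequality argument on a tower between $0$ and $G_0$'' does not yield $X^{G_0}\neq\emptyset$: knowing $X^{G_x}\neq\emptyset$ for some $G_x\lneq G_0$ says nothing about whether $G_0/G_x$ has fixed points on the $\ZZ/p$-cohomology manifold $X^{G_x}$, which is in general far from $\ZZ/p$-acyclic. The paper instead uses that $G_0=\Ker\rho$ and applies Theorem~\ref{thm:fixed-points-kernel-rotation-morphism} (whose proof uses the WLS hypotheses via lifts of the action to circle bundles) to show every cyclic subgroup of $G_0$ has a fixed point; this weak fixed-point property is then upgraded to $X^{G_0}\neq\emptyset$ via \cite[Theorem~1.6]{Mu2022}, which is where the bound $p>C_F$ enters. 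Second, your Borel-localization argument for $\dim X^{G_0}\geq k$ does not go through as stated: there is no general reason the ordinary restriction $H^k(X;\ZZ/p)\to H^k(X^{G_0};\ZZ/p)$ should be nonzero on $\xi^*[T^k]$, and you have not even arranged $G_0\subseteq\Ker\rho$, which is needed for $\xi$ to be $G_0$-invariant. The paper's argument is quite different: since $G_0=\Ker\rho$, the action of $G_0$ lifts to the cover $X_K$; one then uses the finite generation of $H^*(X_K)$ (so $X_K^{G_0}$ has finitely many components, each carrying a finite-index $\ZZ^k$-action) together with a Serre spectral sequence computation for the resulting fibration over $T^k$ to produce nonzero cohomology of a component of $X^{G_0}$ in degree at least $k$.
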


In the previous theorem we may write $G_1\simeq (\ZZ/p)^s$ and $G_0\simeq (\ZZ/p)^{r-s}$, and the existence of a monomorphism
$G_1\hookrightarrow T^k$ implies that $s\leq k$. 
The manifold $X''$ is a model of the homotopy fiber of the map $\xi:X\to T^k$,
so the isomorphism $H^*(X';\QQ)\to H^*(X'';\QQ)\otimes H^*(T^k;\QQ)$ may be seen as a cohomological
analogue of the homeomorphism $X\simeq T^k\times_FY$ obtained by Lupton an Oprea in the context of actions
of $T^k$ on Lefschetz $c$-manifolds (or of the similar decomposition obtained by Amor\'os, Manjar\'{\i}n and Nicolau \cite{AMN}
for holomorphic actions on Kaehler manifolds).


\subsection{Discrete degree of symmetry}
Our techniques also lead to results involving general finite abelian group actions.
To state them we use the notion of discrete degree of symmetry \cite{Mu2021}, whose definition we now
recall. Let $X$ be a closed topological manifold. Define $\mu(X)$ to be the set of natural numbers $m$ such that
there exists a sequence of integers $r_i\to\infty$
and continuous effective actions, for each $i$, of $(\ZZ/r_i)^m$ on $X$.
By a theorem of L.N. Mann and J. C. Su \cite[Theorem 2.5]{MannSu}, the set $\mu(X)$ is finite.
The discrete degree of symmetry of $X$ is defined as
$$\discsym(X):=\max\{0\}\cup\mu(X).$$
It is possible to rephrase the definition into a statement involving effective actions of arbitrary
finite abelian groups, see \cite[Lemma 2.7]{Mu2021}.

We need yet another definition. Let $\tau(X)$ be the biggest $r$ for which there exist classes $\alpha_1,\dots,\alpha_r\in H^1(X;\RR)$
such that $\alpha_1\smile\dots\smile\alpha_r\neq 0$. If $r=\tau(X)$ then there exists a continuous map
$\phi:X\to T^r$ such that $\phi^*H^r(T^r;\RR)\subset H^r(X;\RR)$ is nonzero. By Poincar\'e duality, this
implies that $\phi^*:H^*(T^r;\RR)\to H^*(X;\RR)$ is injective, so $b_j(X;\QQ)\geq\left(r\atop j\right)$
for every $0\leq j\leq r$.

This is our last main result:

\begin{theorem}
\label{thm:main-disc-deg-sym}
Let $X$ be a WLS $n$-dimensional manifold. We have
$$\discsym (X)\leq \frac{n+\tau(X)}{2}.$$
If $\discsym (X)=n$ then there is an isomorphism of rings
$H^*(X)\simeq H^*(T^{n})$. Finally, if $\pi_1(X)$ is virtually solvable
and $\discsym (X)=n$, then $X$ is homeomorphic to $T^{n}$.
\end{theorem}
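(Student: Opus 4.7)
My plan is to derive the three parts of the theorem from Theorem~\ref{thm:main-big-prime-p}, combined with classical Smith-theoretic bounds for $p$-torus actions and topological rigidity of the torus. The first reduction is to an effective action of $G=(\ZZ/p^e)^k$ with $p$ large and $k=\discsym(X)$: if $(\ZZ/r_i)^k$ acts effectively on $X$ with $r_i\to\infty$, a pigeonhole argument on the prime factorizations of the $r_i$ yields a subsequence and a prime $p$ with $p^{e_i}\to\infty$ and $(\ZZ/p^{e_i})^k$ acting effectively; enlarging $p$ if necessary I may assume $p\geq C$, where $C$ is the constant of Theorem~\ref{thm:main-big-prime-p}. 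Applying that theorem produces a splitting $G=G_0\times G_1$, a monomorphism $G_1\hookrightarrow T^\ell$, a $G_1$-equivariant map $\xi\colon X\to T^\ell$, a fixed locus $X^{G_0}$ whose components are $\ZZ/p$-cohomology manifolds of dimension at least $\ell$, and covers $X'\to X$, $X''\to X$ carrying the decomposition $H^*(X';\QQ)\simeq H^*(X'';\QQ)\otimes H^*(T^\ell;\QQ)$ through which $\xi^*\colon H^*(T^\ell;\QQ)\to H^*(X;\QQ)$ is injective.

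The injectivity of $\xi^*$ implies that the pullbacks of the standard $1$-classes on $T^\ell$ are $\ell$ classes in $H^1(X;\QQ)$ with nonzero cup product, so $\ell\leq\tau(X)$. Since $G_0$ acts effectively on $X$, $p$ is odd, and each component of $X^{G_0}$ is a $\ZZ/p$-cohomology manifold of dimension at least $\ell$, the classical Smith/Borel rank bound for $p$-torus actions (see e.g.\ \cite[Ch.~IV]{AP}) gives $\rank G_0\leq (n-\ell)/2$. Combining the two contributions yields
\[k=\rank G_0+\rank G_1\leq \tfrac{n-\ell}{2}+\ell=\tfrac{n+\ell}{2}\leq\tfrac{n+\tau(X)}{2},\]
and since $\tau(X)\leq n$ the same bound also gives $k\leq n$. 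Taking the supremum over such actions establishes the inequality in the statement.

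Suppose now $\discsym(X)=n$. The previous inequalities are forced to be equalities, so $\tau(X)=n$, $G_0$ is trivial and $\ell=n$; in particular $G=G_1$ acts freely on $X$ and $\xi\colon X\to T^n$ is $G$-equivariant. Since $X'$ is a closed $n$-manifold and $H^*(T^n;\QQ)$ already occupies degrees $0$ through $n$, dimension counting inside the isomorphism $H^*(X';\QQ)\simeq H^*(X'';\QQ)\otimes H^*(T^n;\QQ)$ forces $H^*(X'';\QQ)\simeq\QQ$; hence $H^*(X';\QQ)$ has dimension $2^n$, and, using the explicit form of the composition in part (2), the pullback $\xi^*\colon H^*(T^n;\QQ)\to H^*(X;\QQ)$ is itself a ring isomorphism. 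Applying Theorem~\ref{thm:toral-rank-Carlsson} with isotropy bound $C=1$ to the same free action of $G=(\ZZ/p^e)^n$ produces an integrally injective pullback $H^*(T^n)\to H^*(X)$; combining this integral injection with the rational isomorphism and the fact that the induced map has degree $\pm 1$ upgrades the statement to an integral ring isomorphism $H^*(X)\simeq H^*(T^n)$.

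Finally, assume in addition that $\pi_1(X)$ is virtually solvable. Passing to a finite normal cover $\wt X\to X$ I reduce to the case where $\pi_1(\wt X)$ is torsion-free solvable, and $\wt X$ still has the integral cohomology of a torus; in particular $H_1(\wt X;\ZZ)\simeq\ZZ^n$ and $\pi_1(\wt X)$ has cohomological dimension $n$. Bieri's structure theorem for solvable groups of finite cohomological dimension, combined with the rank of the abelianisation, then forces $\pi_1(\wt X)\simeq\ZZ^n$; the classifying map $\wt X\to T^n$ induces isomorphisms on cohomology and fundamental group, hence is a homotopy equivalence by Whitehead, so $\wt X$ is aspherical. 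Topological rigidity of the torus (Farrell--Jones for virtually polycyclic fundamental groups) then promotes this to a homeomorphism $\wt X\cong T^n$, and a descent argument along the finite cover yields $X\cong T^n$. The main obstacle of the whole proof lies in this last paragraph: bridging from the bare cohomological isomorphism to a genuine homeomorphism requires careful group-theoretic control of $\pi_1(X)$ and the invocation of deep topological rigidity results.
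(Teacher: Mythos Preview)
There is a genuine gap in your initial reduction. You write that ``a pigeonhole argument\ldots\ yields a subsequence and a prime $p$ with $p^{e_i}\to\infty$\ldots; enlarging $p$ if necessary I may assume $p\geq C$.'' But the pigeonhole argument produces a \emph{fixed} prime $p$ with growing exponent, and there is no mechanism to enlarge it: if $r_i=2^i$ for all $i$, the only prime available is $p=2$. Theorem~\ref{thm:main-big-prime-p} applies only to $(\ZZ/p)^r$ with $p\geq C$, so your argument covers at most the case where infinitely many primes divide the $r_i$ (the paper's Case~1). The paper treats the complementary Case~2 (a single prime $p$, exponents $e_i\to\infty$) by a separate and substantially longer argument that cannot invoke Theorem~\ref{thm:main-big-prime-p}; instead it works directly with the rotation morphism via Theorems~\ref{thm:main-rotation-morphism} and~\ref{thm:fixed-points-kernel-rotation-morphism}, together with \cite[Theorem~1.6]{Mu2022}, to manufacture both the cohomologically injective map to a torus and the fixed-point subgroup. (A smaller point: even in Case~1 you should restrict to the $p$-torsion subgroup $(\ZZ/p)^k\leq(\ZZ/p^e)^k$ before applying Theorem~\ref{thm:main-big-prime-p}, which is stated only for elementary abelian $p$-groups.)

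Your treatment of the equality case $\discsym(X)=n$ is also shakier than advertised. Once $\tau(X)=n$ the paper simply observes that $X$ is rationally hypertoral and invokes \cite[Theorem~1.3(2)]{Mu2021} for both the integral ring isomorphism $H^*(X)\simeq H^*(T^n)$ and the homeomorphism under virtually solvable $\pi_1$. Your attempt to upgrade the rational isomorphism to an integral one (``the induced map has degree $\pm 1$'') is not justified: an integral injection $H^*(T^n)\hookrightarrow H^*(X)$ that is rationally an isomorphism need not be integrally surjective, and you have not ruled out torsion in $H^*(X)$. Likewise, in the final paragraph several steps are assertions rather than arguments (why a torsion-free solvable group with abelianisation $\ZZ^n$ and cohomological dimension $n$ must be $\ZZ^n$; how the ``descent argument'' from the cover back to $X$ actually runs).
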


It was asked in \cite[Question 1.1]{Mu2021} whether the inequality
$\discsym(X)\leq\dim X$ holds for every closed connected manifold $X$,
and whether the equality $\discsym(X)=\dim X$ is only satisfied by tori.
The previous theorem gives an affirmative answer to the first question for
WLS manifolds, since we clearly have $\tau(X)\leq\dim X$,
and it gives strong evidence for the second one.
This extends \cite[Theorem 1.3]{Mu2021}, which answers the question for
rationally hypertoral manifods.
Other partial positive answers to this question are given in
\cite[Theorems 1.3, 1.4]{Mu2022}, for manifolds with nonzero Euler characteristic
and for integral homology spheres.

Since compact connected Kaehler manifold are WLS,
Theorem \ref{thm:main-disc-deg-sym} generalizes the final part of \cite[Theorem 1.9]{Mu2021}
from holomorphic to continuous actions.

F. Catanese proved in \cite[Theorem 77]{Catanese} that if a compact Kaehler manifold $X$
has the integral cohomology of a torus, then $X$ is biholomorphic to a
complex torus. This fact, combined with Theorem \ref{thm:main-disc-deg-sym}, implies the following.

\begin{corollary}
\label{cor:kaehler-torus}
Let $X$ be a closed connected Kaehler manifold. Suppose that, for some
natural number $m$ and for some sequence of natural numbers $r_i\to\infty$, there is an effective
and continuous action of $(\ZZ/r_i)^m$ on $X$. Then $m\leq\dim_{\RR}X$, and if
$m=\dim_{\RR}X$ then $X$ is biholomorphic to a complex torus.
\end{corollary}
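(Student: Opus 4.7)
The plan is to observe that this is essentially a direct concatenation of Theorem \ref{thm:main-disc-deg-sym} with Catanese's theorem, once we recognise that the hypothesis on $X$ is exactly the statement that $m$ belongs to $\mu(X)$. First I would recall, as the paper already notes, that every compact connected Kaehler manifold is WLS, so Theorem \ref{thm:main-disc-deg-sym} applies to $X$ with $n=\dim_{\RR}X$.

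Next, I would unfold the definition of $\discsym$. The hypothesis supplies effective continuous actions of $(\ZZ/r_i)^m$ on $X$ for a sequence of integers $r_i\to\infty$, which by definition says $m\in\mu(X)$, so $\discsym(X)\geq m$. Combining with the bound $\discsym(X)\leq n$ from Theorem \ref{thm:main-disc-deg-sym} gives the first conclusion $m\leq\dim_{\RR}X$.

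If moreover $m=\dim_{\RR}X$, then $\discsym(X)=n$, and the second part of Theorem \ref{thm:main-disc-deg-sym} provides an isomorphism of rings $H^{*}(X)\simeq H^{*}(T^{n})$, where $H^{*}$ denotes integral cohomology as fixed in the paper. At this point I would invoke Catanese's theorem (\cite[Theorem 77]{Catanese}, as stated just before the corollary) to conclude that $X$ is biholomorphic to a complex torus.

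There is no real obstacle here; the only thing worth double-checking is that Theorem \ref{thm:main-disc-deg-sym} indeed produces an isomorphism of \emph{integral} cohomology rings with $H^{*}(T^{n})$ (and not merely rational), since Catanese's result requires the integral cohomology of a torus. This is compatible with the conventions of the paper, where $H^{*}(\cdot)$ is integral cohomology by default, so the corollary follows immediately.
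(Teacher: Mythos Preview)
Your proof is correct and follows exactly the approach indicated in the paper: apply Theorem \ref{thm:main-disc-deg-sym} (using that compact connected Kaehler manifolds are WLS) to get $m\leq\discsym(X)\leq n$ and, in the equality case, the integral ring isomorphism $H^*(X)\simeq H^*(T^n)$, then invoke Catanese's theorem. Your care in checking that the cohomology isomorphism is integral is exactly the point that makes Catanese's result applicable.
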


See \cite[Theorem 1.5]{Gol} for a similar result, by A. Golota, for $X$ projective and where continuous actions
are replaced by actions through birational transformations.

\subsection{Group actions, maps to tori, and abelian covers}
\label{ss:intro-rotation-morphism}
The main technical contribution of this paper, on which all the previous results are based,
relates the existence of effective actions of arbitrarily large finite abelian groups
on a manifold $X$
to a finiteness property for the homology of certain abelian covers of $X$,
and to the degeneracy over $\QQ$ of the spectral sequence for the classifying
maps these covers. This subsection is devoted to explaining and stating this technical result.

In the first next paragraphs we introduce some notation that will be used throughout this paper.

\subsubsection{Abelian covers and maps to tori}
\label{sss:abelian-covers-tori}
We begin with a general construction to obtain abelian covers.
Let $V$ be a finite dimensional real vector space, and let $\Lambda<V$ be a lattice.
For any $v\in V$ we denote by $[v]\in V/\Lambda$ the image of $v$ under the projection
map $V\to V/\Lambda$. Given a topological space $Z$ and a continuous map $\psi:Z\to V/\Lambda$
we define
$$Z_{\psi}=\{(z,v)\in Z\times V\mid \psi(z)=[v]\}.$$
This is the total space of the pullback of the covering $V\to V/\Lambda$ through the map $\psi$.
The action of
$\Lambda$ on $V$, together with the trivial action on $Z$, induces a diagonal action on
$Z\times V$ that preserves $Z_{\psi}$. With respect to this action, the obvious projection $Z_\psi\to Z$
has a structure of principal $\Lambda$-bundle. Up to homeomorphisms lifting the identity on $Z$,
the bundle $Z_{\psi}\to Z$
only depends  on the homotopy class of the map $\psi$
(see Lemma \ref{lemma:homotopic-bundles-isomorphic}).
Note that, if $Z$ is path connected, then $Z_{\psi}$ is path connected if and only if $\psi_*:H_1(Z)\to H_1(V/\Lambda)$ is surjective.

\subsubsection{Finitely generated free $\ZZ$-modules and tori}
\label{sss:modules-and-tori}
Throughout this paper we identify $S^1$ with $\RR/\ZZ$. Let $\pi:\RR\to\RR/\ZZ$ be the quotient
map. Accordingly, we use additive notation for the group operation on $S^1$.
Denote by $\theta\in H^1(S^1)$ the class represented in de Rham cohomology by a form
in $\Omega^1(S^1)=\Omega^1(\RR/\ZZ)$ whose pullback to $\RR$ via $\pi$ is equal to $dx$,
where $x$ is the canonical coordinate in $\RR$. Then $\theta$ is a generator of $H^1(X)$.

For any finitely generated free $\ZZ$-module $A$ we denote
$$T_A:=\Hom(A,S^1).$$
With its natural topology, induced from the standard topology of $S^1$, $T_A$ is a
torus of dimension $\dim A\otimes_{\ZZ}\QQ$. Any morphism of finitely generated
free $\ZZ$-modules $f:A'\to A$ induces by precomposition a continuous map $T_f:T_A\to T_{A'}$.
In particular, if $A'\leq A$ is a submodule, then we have a restriction morphism
$T_A\to T_{A'}$.

For any $a\in A$, denote by $e_a:T_A\to S^1$ the evaluation at $a$.
The map
$$\epsilon_A:A\to H^1(T_A),$$
sending each $a\in A$ to $e_a^*\theta$, is an isomorphism of groups.
In particular, if $A$ is cyclic generated by $a$ then $\epsilon_A(a)\in H^1(T_A)$ is a generator.
The isomorphism $\epsilon_A$ is natural: if $f:A'\to A$ is a morphism
of finitely generated free $\ZZ$-modules, then 
\begin{equation}
\label{eq:naturality-epsilon-A}
\epsilon_A\circ f=T_f^*\circ\epsilon_{A'}:A'\to H^1(T_A).
\end{equation}

\subsubsection{The canonical element in $[X,T_{H^1(X)}]$}
Let $X$ be a closed connected topological manifold.
The group $H_1(X)$ is finitely generated,
and consequently $H:=H^1(X)\simeq\Hom(H_1(X),\ZZ)$ is a finitely generated free abelian group.

%

\begin{lemma}
\label{lemma:canonical-class}
There exist a map
$\psi:X\to T_H$, unique up to homotopy,
such that $\psi^*\circ\epsilon_H$ is the identity in $H$,
where $\psi^*:H^1(T_H)\to H^1(X)=H$ is the map induced by $\psi$.
\end{lemma}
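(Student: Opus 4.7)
The plan is to reduce everything to the classical bijection $[Z,S^1]\cong H^1(Z;\ZZ)$ given by $f\mapsto f^*\theta$, which is available here because closed topological manifolds have the homotopy type of a CW complex. First I would fix a $\ZZ$-basis $a_1,\dots,a_r$ of the finitely generated free group $H$. The tuple of evaluation maps $(e_{a_1},\dots,e_{a_r})$ is a homeomorphism $T_H\to (S^1)^r$, so a map $\psi:X\to T_H$ is determined up to homotopy by the $r$-tuple of homotopy classes of its components $e_{a_i}\circ\psi:X\to S^1$, which in turn correspond via $f\mapsto f^*\theta$ to the classes $(e_{a_i}\circ\psi)^*\theta\in H^1(X)$.

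Next I would translate this into the language of $\epsilon_H$. By the very definition of $\epsilon_H$, one has $(e_{a_i}\circ\psi)^*\theta=\psi^*(e_{a_i}^*\theta)=\psi^*\epsilon_H(a_i)$, so the $r$-tuple of classes records exactly the values of the $\ZZ$-linear map $\psi^*\circ\epsilon_H:H\to H^1(X)$ on the chosen basis. Assembling the previous two bijections therefore yields a basis-independent correspondence
$$[X,T_H]\xrightarrow{\ \sim\ }\Hom(H,H^1(X)),\qquad [\psi]\longmapsto\psi^*\circ\epsilon_H.$$

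To finish, I would invoke $H=H^1(X)$ so that the target is $\End(H)$, and take the unique preimage of $\mathrm{id}_H$; this furnishes a homotopy class satisfying the required identity, and the uniqueness assertion is just injectivity of the same bijection. I do not foresee any real obstacle: beyond the elementary naturality computations, the only nontrivial ingredient is that $X$ has the homotopy type of a CW complex, which is standard for topological manifolds. If one wanted to avoid even this mild appeal, one could invoke the homotopy equivalence of $T_H$ with its universal cover modulo the lattice $\Hom(H,\ZZ)$ and obstruction theory directly, but the above route via $K(\ZZ,1)$-classification is the cleanest.
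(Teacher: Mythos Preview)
Your proof is correct and follows essentially the same route as the paper: choose a basis of $H$, use the evaluation maps $e_{a_i}$ (which are exactly the paper's restriction maps $\rho_i:T_H\to T_{\langle a_i\rangle}$) to identify $T_H$ with $(S^1)^r$, and then reduce both existence and uniqueness to the classical bijection $[X,S^1]\cong H^1(X)$ coming from $S^1=K(\ZZ,1)$. The only cosmetic difference is that you package the argument as a single bijection $[X,T_H]\to\Hom(H,H^1(X))$ and read off both existence and uniqueness at once, whereas the paper treats the two halves separately; the underlying computations are identical.
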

\begin{proof}
Let $h_1,\dots,h_s$ be a basis of $H$.  Since $S^1$ is an Eilenberg--Maclane space $K(\ZZ,1)$,
the map $[X,S^1]\to H^1(X)$
sending the class of $\phi:X\to S^1$ to $\phi^*\theta\in H^1(X)$ is a bijection,
so we may pick maps $\psi_1,\dots,\psi_s:X\to S^1$ satisfying $\psi_i^*\theta=h_i$.
The map $\psi:X\to T_H$ defined by $\psi(x)(\sum\lambda_ih_i)=\sum\lambda_i\psi_i(x)$ satisfies
$\psi^*\circ\epsilon_H=\Id_H$. This proves existence. For uniqueness up to homotopy, let
$\la h_i\ra\leq H$ be the subgroup generated by $h_i$. The restriction maps $\rho_i:T_H\to T_{\la h_i\ra}$
combine to give a homeomorphism $\prod_i\rho_i:T_H\to\prod_i T_{\la h_i\ra}$. Suppose that $\psi:X\to T_H$ satisfies
$\psi^*\circ\epsilon_H=\Id_H$. Write $(\prod_i\rho_i)\circ\psi=(\psi_1,\dots,\psi_s)$.
Then by (\ref{eq:naturality-epsilon-A}) we have
$\psi_i^*(\epsilon_{\la h_i\ra}(h_i))=(\rho_i\circ\psi)^*(\epsilon_{\la h_i\ra}(h_i))=\psi^*(\epsilon_H(h_i))=h_i$. Since
$T_{\la h_i\ra}\simeq S^1=K(\ZZ,1)$, this implies that each $\psi_i$, and hence also $\psi$, is uniquely prescribed up to homotopy.
\end{proof}

We call the homotopy class represented by the maps in the previous lemma the
{\it canonical element} of $[X,T_H]$.

\subsubsection{Subgroups of $H$ and abelian covers of $X$}
Let $K$ be a subgroup of $H$. Define
$V_K:=\Hom(K,\RR).$
The real vector space structure on the target $\RR$
induces on $V_K$ a structure of real vector space, and the group $K^*=\Hom(K,\ZZ)$ sits as
a lattice in $V_K$.
The map
$$\pi_K:V_K\to T_K,$$
sending
any morphism $K\to\RR$ to its composition with the projection $\RR\to\RR/\ZZ=S^1$,
is a universal cover map, and it gives a homeomorphism $V_K/K^*\cong T_K$.

Let $\psi:X\to T_H$ represent the canonical element in $[X,T_H]$.
Denote by $r_K:T_H\to T_K$ the
restriction morphism. Define
$\psi_K:=r_K\circ \psi:X\to T_K$.
Applying the construction in \S\ref{sss:abelian-covers-tori} to $V=V_K$, $\Lambda=K^*$,
and the map $\psi_K$, we define
$$X_K=X_{\psi_K}.$$
In particular, the natural projection $X_K\to X$ has a structure of principal $K^*$-bundle
(see Subsection \ref{ss:abelian-covers} for more details).

Note that $X_K$ is connected if and only if $H/K$ is torsion free. To see this, note that
in terms of the identifications
$H_1(T_H)\simeq H^*$ and $H_1(T_K)\simeq K^*$ obtained by dualizing
$\epsilon_H$ and $\epsilon_K$, the map ${r_K}_*:H_1(T_H)\to H_1(T_K)$ corresponds
to the dual $\iota^*:H^*\to K^*$ of the inclusion $\iota:K\hookrightarrow H$.
If $H/K$ has nontrivial torsion, then $\iota^*$ fails to be surjective, and
hence ${\psi_K}_*={r_K}_*\circ \psi_*:H_1(X)\to H_1(T_K)$ can neither be surjective.
If $H/K$ is torsion free then $\iota^*$ is indeed surjective,
and hence so is $(\psi_K)_*$ (indeed, $\psi_*:H_1(X)\to H_1(T_H)$ is surjective,
since $\psi$ represents the canonical class in $[X,T_H]$), so in this case $X_K$ is connected.
If $H/K$ is torsion free then there exists a subgroup $K'\leq H$ satisfying $H=K\oplus K'$,
and $X_K$ coincides with
the covering of $X$ corresponding to the subgroup of $\pi_1(X)$ equal to the preimage
of $K'$ under the abelianization map $\pi_1(X)\to H$.

\subsubsection{Definition of the rotation morphism}

Let us say that an action of a group $G$ on $X$ is {\em $H^1$-trivial} (resp. $H^*$-trivial)
if the induced action of $G$ on $H^1(X)$ (resp. $H^*(X)$) is trivial.

Suppose that a finite group $G$ acts $H^1$-trivially on $X$.
Denote by $\{E_r^{p,q}\}$
the entries of the Serre spectral sequence for the Borel fibration $X_G\to BG$.
Since $G$ acts trivially on $H^1(X)$, we have $E_2^{0,1}=H^0(BG;H^1(X))\simeq H^1(X)$.
Define $\xi:H^1(X)\to\Hom(G,S^1)$ to be the composition of the following morphisms:
$$H^1(X)\simeq E_2^{0,1}\stackrel{d_2^{0,1}}{\longrightarrow}
E_2^{2,0}=H^2(BG;H^0(X))\simeq H^2(G)\stackrel{\mu}{\longrightarrow}\Hom(G,S^1),$$
where $\mu$ is the composition of
\begin{enumerate}
\item the inverse of the connecting morphism
$H^1(G;S^1)\to H^2(G)$ in the long exact sequence arising from the
exact sequence of coefficients $0\to\ZZ\to\RR\to \RR/\ZZ=S^1\to 0$ (recall that
$H^k(G;\RR)=0$ for every $k>0$, because $G$ is finite), and
\item the natural isomorphism $H^1(G;S^1)\simeq\Hom(G,S^1)$,
\end{enumerate}
and $\mu$ is hence an isomorphism.

Denote as before $H=H^1(X)$.
The {\it rotation morphism} (for the action of $G$ on $X$) is the morphism of groups
$$\rho:G\to T_H$$
defined as
$\rho(g)(h)=\xi(h)(g)$ for every $g\in G$ and $h\in H$
(see \cite{Mu2010,MundetSaez,Mu2021} for equivalent definitions of $\rho$).

\subsubsection{}
The following two results are the main tools of this paper.

\begin{theorem}
\label{thm:main-rotation-morphism}
Let $X$ be a closed connected topological manifold.
For any number $C_1$ there is a number $C_2=C_2(X,C_1)$ such that:
for any prime $p$, and for any $H^1$-trivial action of a finite $p$-group $G$ on $X$,
whose rotation morphism $\rho:G\to T_{H^1(X)}$ satisfies
$|\Ker\rho|\leq C_1$, there is a subgroup $K\leq H^1(X)$ satisfying the following properties:
\begin{enumerate}
\item the kernel of the restriction map $r_K:\rho(G)\to T_K$ has at most $C_2$ elements,
\item $H^*(X_K)$ is finitely generated as a $\ZZ$-module,
\item there is a subgroup of finite index $K^*_0\leq K^*$ whose induced action
on $H^*(X_K)$ is trivial, and the Serre spectral sequence over the rationals of the fibration
$$X_K\times_{K^*_0} V_K\to V_K/K^*_0$$
degenerates in the second page, and
\item $\psi_K^*:H^*(T_K)\to H^*(X)$ is injective.
\end{enumerate}
\end{theorem}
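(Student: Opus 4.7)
The plan is to build $K$ by a greedy selection, exploit the lift of the $G$-action to $X_K$ to obtain a slightly enlarged lattice $\Gamma$ of symmetries, and then extract finiteness and degeneracy from the extra torsion encoded by $\Gamma$.

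\textbf{Construction of $K$.} The image $\rho(G) \leq T_H$ is a finite $p$-subgroup, whose annihilator $\rho(G)^{\perp} \leq H$ under the canonical pairing $H \times T_H \to S^1$ is a subgroup of finite index. The kernel of $r_K|_{\rho(G)}: \rho(G) \to T_K$ coincides with the annihilator of the image of $K$ inside the finite quotient $H/\rho(G)^{\perp}$, so condition (1) translates into asking that $K$ project onto a sublattice of $H/\rho(G)^{\perp}$ of bounded index. Simultaneously, condition (4) constrains a generating set of $K$ to have cup-products surviving in $H^*(X)$. I would construct $K$ by iteratively selecting primitive classes $k_1, \dots, k_r \in H$, picking each so as to enlarge the span in $H/\rho(G)^{\perp}$ while preserving $k_1 \smile \cdots \smile k_i \neq 0$ in $H^*(X)$. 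The cup-length of $X$ bounds $r$ by an invariant of $X$ alone, and the resulting index inside $H/\rho(G)^{\perp}$ provides $C_2$ in terms of $C_1$ and invariants of $X$.

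\textbf{Lift of $G$ to $X_K$.} Since $G$ acts trivially on $H^1(X) = H$, hence on $H_1(X)/\mathrm{tors}$, the classifying homomorphism $\pi_1(X) \to K^*$ of the cover $X_K \to X$ is $G$-invariant; consequently the cover is $G$-equivariant and the action lifts. Let $\Gamma \leq \Homeo(X_K)$ be the group generated by $K^*$ (deck transformations) and the chosen lifts of $G$; it fits in a central extension
\[
0 \to K^* \to \Gamma \to G \to 0,
\]
and because $G$ is abelian and acts trivially on $K^*$, $\Gamma$ is itself abelian, isomorphic to $\ZZ^d \oplus T_\Gamma$ with $d = \rank K^*$ and $T_\Gamma$ a finite $p$-subgroup injecting into $G$. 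The extension class is precisely the restricted rotation morphism $r_K \circ \rho: G \to T_K$, which by the choice of $K$ in Step~1 has small kernel.

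\textbf{Finiteness, degeneracy, and injectivity.} Fix a sublattice $K^*_0 \leq K^*$ of finite index, small enough that $X' := X_K/K^*_0$ is a finite manifold cover of $X$ carrying the action of the finite abelian group $\Gamma/K^*_0$. The critical step is to show that $H^*(X_K; \QQ)$ is finite-dimensional. I would view it as a finitely generated module over $\QQ[K^*] \simeq \QQ[t_1^{\pm 1}, \dots, t_d^{\pm 1}]$ and analyze its support in $\Spec \QQ[K^*] \simeq (\QQ^{\times})^d$: the action of $T_\Gamma$, of large $p$-power order, together with translations by $K^*/K^*_0$, forces the support to lie in the finite group of roots of unity of order dividing $|\Gamma/K^*_0|$; a refined Smith-theoretic argument on the regular abelian cover $X_K \to X'$ then concentrates the support at the trivial character, giving finite-dimensionality. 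Finite generation over $\ZZ$ follows by transfer along $X_K \to X'$. After enlarging $K^*_0$ if necessary, its action on $H^*(X_K; \QQ)$ is trivial, so the $E_2$-page of the Serre spectral sequence of $X_K \to X_K \times_{K^*_0} V_K \to V_K/K^*_0$ is $H^*(V_K/K^*_0; \QQ) \otimes H^*(X_K; \QQ)$, and the transgressions $d_2$ on the base classes vanish because those classes pull back (through $\psi_K$) to the classes of $K \hookrightarrow H$ in $H^1(X')$, which are nonzero by (4). This also furnishes the injectivity of $\psi_K^*$.

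The main obstacle is the finite-dimensionality of $H^*(X_K; \QQ)$: it requires extracting from the large $p$-power structure of $T_\Gamma$ acting on $X_K$ a uniform bound on the support of the $\QQ[K^*]$-module, independently of the particular $G$. A Smith-theoretic argument for the $p$-torsion action, combined with a careful spectral-sequence analysis of the cover $X_K \to X'$, should deliver this, but the combinatorics of choosing $K^*_0$ so that the roots-of-unity support collapses to the trivial character are the subtle part.
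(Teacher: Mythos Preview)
Your proposal has a genuine structural gap: you try to construct $K$ for a \emph{single} group $G$ and then deduce properties (2) and (3) from that one action, but those properties---finite generation of $H^*(X_K)$ over $\ZZ$ and degeneracy of the spectral sequence---are intrinsic to $X_K$ and cannot be read off from a single finite group action. Concretely, for a fixed $G$ the lifted action only tells you that the monodromy operators $t_j$ acting on $H^*(X_K)$ admit roots of order roughly $|G|$; that places no constraint on whether the $\QQ[K^*]$-module $H^*(X_K;\QQ)$ is finite-dimensional. (Think of the universal abelian cover of a genus-$2$ surface: plenty of finite groups act, yet the cover has infinitely generated cohomology.) Your Smith-theoretic sketch asserts the support collapses to the trivial character, but a single $p$-group action can at best push the support into $p$-power roots of unity, not to $\{1\}$.

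The paper's proof is organized around exactly this obstacle. It argues by contradiction: if no uniform $C_2$ exists, there is a \emph{sequence} of $p_i$-groups $G_i$ with $C(G_i,X)\to\infty$. A minimal cofree $K$ is then extracted from this sequence (Lemma~\ref{lemma:descending-chain-stabilizes} and the set $\mathcal{L}$), and the whole sequence is used to show that each monodromy generator $t_j^{d_j}$ on $H^*(X_K)$ has roots of \emph{unbounded} order (Lemma~\ref{lemma:arrels-de-t-j}). This, via \cite[Corollary~6.3]{Mu2021} and Theorem~\ref{thm:trivial-module-structure}, forces $H^*(X_K)$ to be finitely generated and the monodromy to have finite order. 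The degeneracy (3) is then obtained not from survival of base classes (which is automatic and only controls $E_r^{*,0}$) but from a delicate comparison of four fibrations (diagram~(\ref{eq:diagrama-fibracions})), in which the sequence of coverings $T_{K_0}\to T_{K_0}/T_{K_0}[n_i]$ with $n_i\to\infty$ forces all differentials to be divisible by arbitrarily large integers, hence to vanish rationally. Your argument for degeneracy addresses only the bottom row of the spectral sequence and says nothing about $d_r^{p,q}$ for $q>0$. In short, the contradiction-via-sequence mechanism is not a packaging choice but the source of the finiteness, and your direct construction bypasses it without supplying a substitute.
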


The previous theorem is valid for arbitrary closed topological manifolds.
What makes it particularly useful in the case of WLS manifolds is that,
on these manifolds, there is a relation between the nonvanishing of the
kernel of the rotation morphism and the existence of points with big stabilizer subgroup. This
is the content of the following result.

\begin{theorem}
\label{thm:fixed-points-kernel-rotation-morphism}
Suppose that $X$ is an $n$-dimensional WLS manifold. There is a constant $C_3$ such that,
for every prime $p$ and for every effective and $H^*$-trivial
action of a (finite) cyclic $p$-group $G$ on $X$ with trivial rotation morphism,
there exists some $x\in X$ whose stabilizer $G_x$ satisfies
$|G|\leq C_3\cdot|G_x|^{n/2}.$
\end{theorem}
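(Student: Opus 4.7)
The plan is to use the triviality of the rotation morphism to realize the canonical map $\psi:X\to T_H$ strictly $G$-invariantly, and then to reduce to a Lefschetz-type fixed-point estimate on a generic fiber of $\psi$, with Smith theory as a fallback when fibers fail to be of Lefschetz type.

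First I would fix integer models of the WLS data. Since the WLS condition is open and rational classes are dense in $H^1(X;\RR)$ and $H^2(X;\RR)$, I may pick integral $\alpha_1,\dots,\alpha_r\in H^1(X;\ZZ)$ and $\Omega\in H^2(X;\ZZ)$ with $\alpha_1\smile\cdots\smile\alpha_r\smile\Omega^k$ pairing as a nonzero integer with $[X]$, together with finitely many integral Lefschetz certificates for (W2). For primes $p$ outside a finite set depending only on $X$ (absorbed into $C_3$) these nonvanishings persist modulo $p$. Triviality of $\rho:G\to T_{H^1(X)}$ lets me choose a representative of the canonical map $\psi:X\to T_H$ that is strictly $G$-invariant, so that its fibers are $G$-invariant subsets of $X$. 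For a generic value $t\in T_H$, the fiber $F:=\psi^{-1}(t)$ is a closed $2k$-dimensional topological submanifold of $X$, and a Poincar\'e duality argument using $\alpha_i=\psi^*\theta_i$ shows $\Omega|_F^k\neq 0$ in $H^{2k}(F;\QQ)$, making $F$ a $c$-symplectic $2k$-manifold.

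Next I would run the fixed-point argument. Condition (W2), restricted to $F$, should upgrade its $c$-symplectic structure to Lefschetz type in the sense of~\cite{LO}. The induced $G$-action on $F$ is $H^*$-trivial, so a Lefschetz-type fixed-point argument analogous to Theorem~\ref{thm:LO} yields $F^G\neq\emptyset$; any $x\in F^G\subseteq X^G$ then satisfies $|G_x|=|G|$, and the inequality $|G|\leq C_3|G_x|^{n/2}$ is immediate.

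The main obstacle is that $F$ may fail to be of Lefschetz type --- for instance when $\chi(F)=0$ --- so the fiber Lefschetz argument does not directly apply. In that regime I would proceed by iterated Smith theory along the subgroup chain $1=G^{(0)}<G^{(1)}<\cdots<G^{(e)}=G$: each $X^{G^{(i)}}$ is an $\FF_p$-cohomology manifold, the codimension drops are controlled, and the restrictions of $\alpha_i$ and $\Omega$ to the tower of fixed sets inherit a WLS-type structure controlled by the (W2) certificates. Because $G$ is cyclic, the maximal stabilizer $G^{(f)}$ is well-defined and $G/G^{(f)}$ acts freely on $X^{G^{(f)}}$. Combining the Smith codimension drops along the chain with the bound on free actions of cyclic $p$-groups on the WLS-like fixed set should give the exponent $n/2=r/2+k$ in $|G|\leq C_3|G_x|^{n/2}$. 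Making this descent rigorous, in particular preserving enough of the WLS structure at each Smith restriction to obtain the correct polynomial exponent, is the central technical difficulty.
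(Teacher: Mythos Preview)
Your proposal has genuine gaps, and the paper's argument is entirely different.

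\textbf{The fiber argument does not go through.} First, $X$ is only a closed topological manifold and $\psi:X\to T_H$ is only continuous; there is no transversality available, so there is no reason for a generic fiber $F=\psi^{-1}(t)$ to be a $2k$-dimensional submanifold, let alone a closed one. Second, even granting this, your claim that (W2) restricts to make $F$ of Lefschetz type is unsupported: (W2) concerns classes in $H^1(X;\RR)$, but since $\psi^*:H^1(T_H)\to H^1(X)$ is an isomorphism, every class in $H^1(X;\RR)$ pulls back from the base and hence restricts to zero on $F$. So the Lefschetz certificates in (W2) tell you nothing about $H^1(F)$. Third, if your fiber argument worked it would give $X^G\neq\emptyset$, which is strictly stronger than the theorem's conclusion $|G|\le C_3|G_x|^{n/2}$; this should already signal that the mechanism is wrong. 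Your Smith-theory fallback is only a hope: you identify ``preserving enough of the WLS structure at each Smith restriction'' as the central difficulty but offer no way to do it, and in fact the fixed-point sets are only $\ZZ/p$-cohomology manifolds, so the WLS framework (which is over $\RR$) does not transport.

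\textbf{What the paper actually does.} The proof never looks at fibers of $\psi$. It fixes an integral WLS class $\lambda\in H^2(X)$, takes a circle bundle $P\to X$ with Euler class $\lambda$, and studies the obstruction $c^{p^e}(\gamma,P)\in H^1(X)\otimes\ZZ/p^e$ to lifting the generator $\gamma$ of $G$ to a bundle automorphism of order $p^e$. In Case~1 the obstruction vanishes: the action lifts to $P$, and quotienting $P$ by the $p^f$-th roots of unity (where $p^f=\max_x|G_x|$) produces a bundle that descends to $X/G$, so $p^f\lambda\in\zeta^*H^2(X/G)$. Combining with $\zeta^*H^1(X/G)=H^1(X)$ (Corollary~\ref{cor:trivial-rotation-pullback-H-1}, using the triviality of $\rho$) and the degree bound $\zeta^*H^n(X/G)\le p^eH^n(X)$ yields $p^e\le \delta_n p^{fk}$. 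In Case~2 the obstruction is a nonzero element represented by $p^s\alpha$ with $\alpha$ primitive; one uses (W2) to write $\delta_{n-1}[X]$ as a sum of monomials $\alpha\smile\beta_1\smile\cdots\smile\beta_r\smile\lambda^t$, passes to the abelian cover $\bar X$ killing $\alpha,\beta_1,\dots,\beta_r$ so that the obstruction vanishes there, and repeats the Case~1 argument on $\bar X$ via a pushforward $\iota_!$ along the fibration $\bar X\times_{K^*}V_K\to T_K$. The exponent $n/2$ arises because the power of $\lambda$ appearing is at most $n/2$. The constant $C_3$ is $\max\{\delta_{n-1},\delta_n\}$, the cokernel sizes of the integral WLS maps.
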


The group actions in Theorems
\ref{thm:toral-rank-Carlsson}, \ref{thm:main-big-prime-p} and \ref{thm:main-disc-deg-sym}
are not assumed to induce a trivial action on $H^*(X)$, and not even on $H^1(X)$, unlike
the actions considered in
Theorems \ref{thm:main-rotation-morphism} and \ref{thm:fixed-points-kernel-rotation-morphism}.
Despite this fact, we may use Theorems \ref{thm:main-rotation-morphism} and
\ref{thm:fixed-points-kernel-rotation-morphism} to prove
Theorems
\ref{thm:toral-rank-Carlsson}, \ref{thm:main-big-prime-p} and \ref{thm:main-disc-deg-sym},
thanks to the following
result, which follows from a well nown lemma of Minkowski (see
the proof of Lemma \ref{lemma:Minkowski-X-K} below).

\begin{lemma}
\label{lemma:Minkowski}
For any closed manifold $X$ there exists a natural number $C_M$, depending only on $X$,
such that for any action on $X$ of a finite group $G$ there is a
subgroup $G'\leq G$ whose action on $H^*(X)$ is trivial, and which satisfies
$[G:G']\leq C_M$.
\end{lemma}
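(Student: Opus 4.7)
The plan is to bound the order of the image of the natural homomorphism $\phi\colon G\to\Aut(H^*(X))$ induced by the action on cohomology by a constant $C_M$ depending only on $X$; the subgroup $G':=\Ker\phi$ will then automatically satisfy both required properties.

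Since $X$ is a closed manifold, $H^*(X)=\bigoplus_{j=0}^{\dim X}H^j(X)$ is a finite direct sum of finitely generated abelian groups, and $\Aut(H^*(X))$ is contained in $\prod_j\Aut(H^j(X))$. It therefore suffices to show, for each $j$ in the finite range $0\leq j\leq\dim X$, that every finite subgroup of $\Aut(H^j(X))$ has order bounded by a constant depending only on $X$.

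To carry this out, for each $j$ decompose $H^j(X)\cong\ZZ^{b_j}\oplus T_j$, where $T_j$ is the torsion subgroup and $b_j:=\rank H^j(X)$. Any automorphism of $H^j(X)$ preserves $T_j$, so there is a natural homomorphism
$$\Aut(H^j(X))\longrightarrow\GL(b_j,\ZZ)\times\Aut(T_j),$$
whose kernel consists of automorphisms of the form $(v,t)\mapsto(v,t+f(v))$ for some $f\in\Hom(\ZZ^{b_j},T_j)$, hence has cardinality $|T_j|^{b_j}$. The factor $\Aut(T_j)$ is a finite group since $T_j$ is. By Minkowski's classical lemma, every finite subgroup of $\GL(b_j,\ZZ)$ has order bounded by a constant depending only on $b_j$. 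Combining these three finite bounds, any finite subgroup of $\Aut(H^j(X))$ has order at most a constant $C_j$ depending only on $X$.

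Setting $C_M:=\prod_{j=0}^{\dim X}C_j$, we conclude $|\Im\phi|\leq C_M$, so $G':=\Ker\phi$ acts trivially on $H^*(X)$ and satisfies $[G:G']\leq C_M$. There is no serious obstacle: the only nontrivial input is Minkowski's bound on the order of finite subgroups of $\GL(n,\ZZ)$, and the rest is a routine reduction through the standard structure of automorphism groups of finitely generated abelian groups.
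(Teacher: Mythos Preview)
Your proof is correct and follows essentially the same approach as the paper, which simply cites Minkowski's lemma (via \cite[Lemma 2.6]{Mu2016}) rather than spelling out the argument. Your reduction to bounding finite subgroups of each $\Aut(H^j(X))$ and then invoking Minkowski's bound on finite subgroups of $\GL(b_j,\ZZ)$ is exactly the intended argument.
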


\subsection{Contents}
Section \ref{s:rotation-morphism} gives an alternative, more geometric, characterization of the rotation morphism. In Section \ref{s:proof-thm:main-rotation-morphism} we prove Theorem \ref{thm:main-rotation-morphism}. Section \ref{s:proof-thm:trivial-module-structure}
contains the proof of a technical result that was used in proving Theorem
\ref{thm:main-rotation-morphism}. Theorem \ref{thm:fixed-points-kernel-rotation-morphism} is proved in Section \ref{s:integral-cohomology}. Section \ref{s:proofs-main-thms} contains the
proofs of the results stated in the introduction.

\subsection{More notation}
If $M$ is a $\ZZ$ module, we denote by $\Tor M$ the torsion of $M$.
We use $\leq$ to denote group inclusion. We identify
$T^k$ with $\RR^k/\ZZ^k$. We use additive notation
for the group operation on abelian groups.

\subsection{Acknowledgements}
I am very pleased to thank G. Lupton and J. Oprea for several useful comments.
Many thanks also to the referee for a very detailed and helpful report.

\section{Equivariant maps to tori and the rotation morphism}

\label{s:rotation-morphism}

\begin{theorem}
\label{thm:rotation-morphism}
Let $X$ be a closed connected topological manifold, and let $H:=H^1(X)$.
For any $H^1$-trivial action of a finite group $G$ on $X$, there is a continuous
map $\phi:X\to T_H$ satisfying:
\begin{enumerate}
\item[(R1)] $\phi$ represents the canonical element in $[X,T_H]$,
\item[(R2)] 
$\phi(g\cdot x)=\rho(g)+\phi(x)$ for each $x\in X$ and $g\in G$,
where $\rho:G\to T_H$ is the rotation morphism.
\end{enumerate}
\end{theorem}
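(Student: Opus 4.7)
My plan is to build $\phi$ by averaging a canonical lift of $\psi$ on an abelian cover of $X$, taking advantage of the fact that $V_H$ is a real vector space and so admits division by $|G|$.

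Starting from $\psi:X\to T_H$ representing the canonical element (Lemma \ref{lemma:canonical-class}), I form the abelian cover $X_H=X_\psi\to X$ and the tautological $H^*$-equivariant lift $\tilde\psi:X_H\to V_H$. Since $G$ acts trivially on $H=H^1(X)$, the $G$-action on $X$ lifts to $X_H$ as an action of a central extension
\[
1\to H^*\to \Gamma\to G\to 1,
\]
whose class in $H^2(G;H^*)$ corresponds, under the connecting isomorphism of $0\to H^*\to V_H\to T_H\to 0$ (using $H^{>0}(G;V_H)=0$), to $\rho\in\Hom(G,T_H)$; this is an equivalent description of the rotation morphism given in the references cited in the introduction. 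Choosing a normalized set-theoretic section $s:G\to\Gamma$ with $2$-cocycle $c\in Z^2(G;H^*)$, I define
\[
\tilde\phi(\tilde x):=\frac{1}{|G|}\sum_{g\in G}\tilde\psi(s(g)\cdot\tilde x)\in V_H.
\]
Using centrality of $H^*$ in $\Gamma$ and the $H^*$-equivariance of $\tilde\psi$, a direct computation shows that for any $\delta=h\cdot s(d)\in\Gamma$ with $h\in H^*$, the difference $\tilde\phi(\delta\cdot\tilde x)-\tilde\phi(\tilde x)=h+\frac{1}{|G|}\sum_{g\in G} c(g,d)$ is independent of $\tilde x$. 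The resulting map $u:\Gamma\to V_H$ is a group homomorphism (from the cocycle identity for $c$), restricts to the inclusion on $H^*$, and $\pi\circ u:\Gamma\to T_H$ factors as $\rho$ composed with $\Gamma\to G$.

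Descending, I set $\phi(x):=\pi(\tilde\phi(\tilde x))$ for any lift $\tilde x\in X_H$ of $x$; this is well-defined because $u|_{H^*}$ is the inclusion, and the $\Gamma$-equivariance of $\tilde\phi$ immediately gives $\phi(g\cdot x)=\phi(x)+\rho(g)$, proving (R2). To verify (R1), note that $\tilde\phi-\tilde\psi:X_H\to V_H$ is $H^*$-invariant (both maps translate by the same inclusion on $H^*$), so it descends to a continuous $\eta:X\to V_H$. Thus $\phi-\psi=\pi\circ\eta$ factors through the contractible space $V_H$ and is null-homotopic, so $\phi$ and $\psi$ represent the same element of $[X,T_H]$, namely the canonical one.

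The main obstacle I expect is the identification in paragraph two: that the extension class $[\Gamma]\in H^2(G;H^*)$ coincides, under the isomorphism $H^2(G;H^*)\cong\Hom(G,T_H)$, with the rotation morphism $\rho$ as defined via the Borel fibration in the introduction. This is a standard but technical diagram chase relating the obstruction to lifting the $G$-action to the principal $H^*$-cover $X_H\to X$ with the transgression $d_2^{0,1}(\Id_H)$ in the Serre spectral sequence. Once this identification is in hand, the averaging construction and the verifications of (R1) and (R2) are essentially formal.
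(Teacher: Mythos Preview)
Your argument is correct, and the core idea---averaging $\psi$ to produce an equivariant map---is the same as the paper's, but the execution differs in two places. For the construction, the paper averages directly on $T_H$, defining $\phi$ implicitly by $|G|\cdot\phi(x)=\sum_{g\in G}\psi(g\cdot x)$ (which determines a continuous $\phi$ up to a global translation since $G$ acts $H^1$-trivially), whereas you lift to $X_H$ and average in the vector space $V_H$, where division by $|G|$ is unambiguous; your version is a bit cleaner. For the identification of the equivariance morphism with $\rho$, the paper does \emph{not} use the extension class. Instead it proves a separate characterization lemma: any $\phi$ representing the canonical class in $[X,T_H]$ and satisfying $\phi(g\cdot x)=\eta(g)+\phi(x)$ for some morphism $\eta:G\to T_H$ must have $\eta=\rho$. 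This is shown by naturality of the Borel spectral sequence (pushing the question to the action of $G$ on $T_H$ by translations via $\eta$) and then reducing, coordinate by coordinate, to the identity $d_2^{0,1}(\theta)=c_1(EG\times_\zeta S^1)=\mu^{-1}(\zeta)$ for $G$ acting on $S^1$ through a character $\zeta$. Your route via $[\Gamma]\in H^2(G;H^*)$ is valid---it is indeed one of the equivalent descriptions of $\rho$ alluded to in the paper's cited references---but note that the paper's characterization lemma would hand you (R2) for free once you have (R1) and equivariance with respect to \emph{some} $\eta$, bypassing the extension-class diagram chase you flag as the main obstacle.
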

\begin{proof}
We use the same ideas as in the proof of \cite[Lemma 4.2]{Mu2021}.
Let $\psi:X\to T_H$ be a map representing the canonical element in $[X,T_H]$,
as given by Lemma \ref{lemma:canonical-class}.
Suppose that a finite group $G$ acts $H^1$-trivially on $X$.
For each $g\in G$, let $g^*\psi:X\to T_H$
denote the map defined by $(g^*\psi)(x):=\psi(g\cdot x)$.
Since $G$ acts $H^1$-trivially on $X$,
the maps $g^*\psi$ and $\psi$ are homotopic
for each $g\in G$, so $\xi:=-|G|\cdot\psi+\sum_{g\in G}g^*\psi$ represents the
trivial element in $[X,T_H]$. Hence, there exists a map $\xi':X\to V_H=\Hom(H,\RR)$
satisfying $\xi=\pi_H\circ\xi'$, where $\pi_H:V_H\to T_H$ is the projection, which is the
universal covering space of $T_H$.
Let $\chi:=\pi_H(\xi'/|G|)$ and let $\phi:=\chi+\psi$. The map $\chi$ represents the trivial element in $[X,T_H]$
(because it has a lift $X\to V_H$, namely $\xi'/|G|$), so $\phi$ and $\psi$ are
homotopic, which proves (R1).
We also have
$$|G|\cdot\chi=\xi=-|G|\cdot\psi+\sum_{g\in G}g^*\psi,$$
which implies rearranging that
$|G|\cdot\phi=|G|\cdot(\chi+\psi)=\sum_{g\in G}g^*\psi$.
The latter implies that $|G|\cdot (g^*\phi)=g^*(|G|\cdot \phi)=|G|\cdot\phi$
for each $g\in G$,
from which it follows that there exists some $\eta_g\in T_H$ such that
$g^*\phi-\phi=\eta_g$. Clearly, the map $\eta:G\to T_H$, $\eta(g):=\eta_g$ is a morphism of groups. Using this fact, property (R2) follows from the next lemma.
\end{proof}

\begin{lemma}
Let a finite group $G$ act $H^1$-trivially on a closed connected topological manifold
$X$. Let $H:=H^1(X)$.
Suppose that a continuous $\phi:X\to T_H$ represents the canonical element in $[X,T_H]$
and that $\eta:G\to T_H$ is
a morphism of groups such that $\phi(g\cdot x)=\eta(g)+\phi(x)$ for every
$g\in G$ and $x\in X$. Then $\eta$ is equal to the rotation morphism of
the action of $G$ on $X$.
\end{lemma}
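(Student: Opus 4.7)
The plan is to realise $\eta$ as the $d_2^{0,1}$-transgression of the Serre spectral sequence of $X_G \to BG$ via a map of Borel fibrations, and then compare with the definition of the rotation morphism to conclude $\rho = \eta$.

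First I would observe that property (R2) is exactly the statement that $\phi$ is $G$-equivariant once $G$ is made to act on $T_H$ by translations via $\eta$. Taking Borel constructions therefore produces a commutative diagram of fibrations over $BG$
\[
\xymatrix{
X \ar[r]^{\phi} \ar[d] & T_H \ar[d] \\
X_G \ar[r]^{\phi_G} \ar[d] & (T_H)_{G,\eta} \ar[d] \\
BG \ar@{=}[r] & BG
}
\]
where $(T_H)_{G,\eta} := T_H \times_G EG$. Property (R1), combined with Lemma \ref{lemma:canonical-class}, identifies $\phi^* : H^1(T_H) \to H^1(X)$ with the inverse of $\epsilon_H$; so by naturality of the transgressions in the two Serre spectral sequences, it suffices to compute $d_2^{0,1}$ for $(T_H)_{G,\eta} \to BG$ in terms of $\eta$.

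For that computation, I would pick a basis $h_1,\dots,h_s$ of $H$, which gives a $G$-equivariant splitting $T_H \simeq \prod_i T_{\langle h_i\rangle}$ on which $G$ acts on the $i$-th factor by translations through the character $g \mapsto \eta(g)(h_i)$. Multiplicativity of the Serre spectral sequence with respect to products of fibres reduces the problem to the case $T_H = S^1$ with $G$ acting through some $\alpha \in \Hom(G,S^1)$. In that case $S^1 \times_G EG \to BG$ is the principal $S^1$-bundle obtained by pulling back the universal bundle along the classifying map $BG \to BS^1 \simeq K(\ZZ,2)$ induced by $\alpha$, whose classifying class in $H^2(BG;\ZZ) = H^2(G)$ is precisely $\mu^{-1}(\alpha)$. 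A standard spectral-sequence argument (e.g.\ via comparison with the path--loop fibration over $K(\ZZ,1)$) then identifies the transgression of the fundamental class $\theta \in H^1(S^1)$ with that class $\mu^{-1}(\alpha)$.

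Putting everything together, for every $h \in H$ one obtains $d_2^{0,1}(h) = \mu^{-1}(g \mapsto \eta(g)(h))$ in the spectral sequence of $X_G \to BG$, hence $\xi(h)(g) = \eta(g)(h)$, and therefore $\rho(g)(h) = \xi(h)(g) = \eta(g)(h)$ by the very definition of $\rho$, which is what one wanted. The delicate step is the transgression computation for the universal translation action of a finite group on $S^1$: it is classical, but is the crux underlying the geometric meaning of the isomorphism $\mu$ and of the identification $H^1(G;S^1) \simeq H^2(G;\ZZ)$, and it needs to be handled carefully to ensure that the sign and normalisation conventions match those used in the definition of $\rho$ in Subsection \ref{ss:intro-rotation-morphism}.
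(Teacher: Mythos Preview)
Your proposal is correct and follows essentially the same route as the paper: naturality of the Borel construction and the Serre spectral sequence reduces the claim to computing $d_2^{0,1}$ for the translation action of $G$ on $T_H$ via $\eta$, which is then split into circle factors. For the rank-one case, the paper phrases the key identification as $d_2^{0,1}(\theta)=c_1(EG\times_\zeta S^1)$ via the Gysin sequence of a circle bundle and then observes that $\zeta\mapsto c_1(EG\times_\zeta S^1)$ is precisely $\mu^{-1}$, which is the same content as your classifying-map/path--loop argument.
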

\begin{proof}
Let $\xi:H^1(X)\to\Hom(G,S^1)$ be the morphism defined in
Subsection \ref{ss:intro-rotation-morphism}.
Since $\phi(g\cdot x)=\eta(g)+\phi(x)$ for every
$g\in G$ and $x\in X$,
by the naturality of the Serre spectral sequence and the Borel construction, we have
$\xi\circ \phi^*=\xi_H$,
where 
$\xi_H$ is the composition
$$H^1(T_H)\simeq E_2^{0,1}\stackrel{d_2^{0,1}}{\longrightarrow}
E_2^{2,0}=H^2(BG;H^0(T_H))\simeq H^2(G)\stackrel{\mu}{\longrightarrow}\Hom(G,S^1),$$
i.e. the analogue, for the action of $G$ on $T_H$ given by the morphism $\eta$, of the morphism $\xi$
(clearly, such action of $G$ on $T_H$ is $H^1$-trivial).
Since $\phi^*$ is an isomorphism
and $\rho$ is characterized by the condition that $\rho(g)(h)=\xi(h)(g)$,
it suffices to prove that $\eta(g)(h)=\xi_H(\epsilon(h))(g)$ for every $h\in H$
(recall that $\epsilon$ is the inverse of $\phi^*$).
Since $G$ acts on $T_H$ by translations through the morphism $\eta$, we may choose a
group isomorphism $T_H\simeq (S^1)^b$ and identify the action of $G$ on $T_H$ with
the diagonal action for a collection of actions of $G$ on each of the factors $S^1$,
given by morphisms $\eta_1,\dots,\eta_b:G\to S^1$. This reduces the proof to checking that,
if $G$ acts on $S^1$ via translations through a morphism $\zeta:G\to S^1$, then
the composition
$$H^1(S^1)\simeq E_2^{0,1}\stackrel{d_2^{0,1}}{\longrightarrow}
E_2^{2,0}=H^2(BG;H^0(S^1))\simeq H^2(G)\stackrel{\mu}{\longrightarrow}\Hom(G,S^1)$$
sends the generator $\theta\in H^1(S^1)$ to $\zeta$. In the previous sequence,
$d_2^{0,1}$ is a differential in the spectral sequence for the Borel fibration $(S^1)_G\to BG$.
This fibration is a circle bundle over $BG$, so $d_2^{0,1}(\theta)=c_1(EG\times_{\zeta}S^1)$, by the relation
between Serre's spectral sequence and Gysin's sequence of a circle bundle.
To finish, it suffices to observe that the map
$\Hom(G,S^1)\to H^2(G)$, sending $\zeta:G\to S^1$ to $c_1(EG\times_{\zeta}S^1)$,
is the inverse of $\mu$.
\end{proof}

The following corollary will be used
later in the proof of Theorem \ref{thm:fixed-points-kernel-rotation-morphism}.

\begin{corollary}
\label{cor:trivial-rotation-pullback-H-1}
Let a finite group $G$ act $H^1$-trivially on a closed connected topological manifold $X$.
Assume the rotation morphism of this action is trivial.
Let $\zeta:X\to X':=X/G$ be the quotient map.
Then $\zeta^*:H^1(X')\to H^1(X)$ is surjective.
\end{corollary}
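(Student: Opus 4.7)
The plan is to use Theorem \ref{thm:rotation-morphism} applied to the given action. This theorem produces a map $\phi:X\to T_H$ satisfying (R1) and (R2). Since the rotation morphism $\rho$ is assumed trivial, property (R2) specializes to $\phi(g\cdot x)=\phi(x)$ for every $g\in G$ and $x\in X$, i.e., $\phi$ is $G$-invariant.

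Next, because $\zeta:X\to X'=X/G$ is a quotient map for a continuous action of a finite group, any $G$-invariant continuous map out of $X$ factors through $\zeta$. Hence there exists a continuous $\phi':X'\to T_H$ with $\phi=\phi'\circ\zeta$, and therefore $\phi^*=\zeta^*\circ(\phi')^*:H^*(T_H)\to H^*(X)$.

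Finally, property (R1) asserts that $\phi$ represents the canonical element of $[X,T_H]$, which by the defining property in Lemma \ref{lemma:canonical-class} means $\phi^*\circ\epsilon_H=\Id_H$. In particular, $\phi^*:H^1(T_H)\to H^1(X)=H$ is surjective (indeed, an isomorphism). Since $\phi^*$ factors as $\zeta^*\circ(\phi')^*$, it follows that $\zeta^*:H^1(X')\to H^1(X)$ is surjective, which is the desired conclusion.

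There is no serious obstacle here: once Theorem \ref{thm:rotation-morphism} is in hand, the argument is essentially a formal consequence of the equivariance statement (R2) combined with the cohomological normalization (R1). The only point that requires minimal care is the factorization through the quotient, which is automatic for continuous actions of finite groups on Hausdorff spaces.
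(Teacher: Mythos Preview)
Your proof is correct and follows essentially the same approach as the paper's own proof: invoke Theorem~\ref{thm:rotation-morphism} to obtain the $\rho$-equivariant map $\phi:X\to T_H$, observe that triviality of $\rho$ makes $\phi$ $G$-invariant and hence factor it through $\zeta$, and conclude from (R1) that $\phi^*$ is an isomorphism on $H^1$, forcing $\zeta^*$ to be surjective.
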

\begin{proof}
By the theorem, there is a map $\phi:X\to T_H$ such that
$\phi^*:H^1(T_H)\to H^1(X)$ is an isomorphism, and which is equivariant
with respect to the rotation morphism $\rho:G\to T_H$, i.e.,
$\phi(g\cdot x)=\rho(g)+\phi(x)$ for $x\in X$ and $g\in G$.
Since by assumption $\rho$ is trivial, this implies that $\phi:X\to T_H$ factors
through $\zeta:X\to X'$.
\end{proof}

\section{Proof of Theorem \ref{thm:main-rotation-morphism}}
\label{s:proof-thm:main-rotation-morphism}

Let us fix a closed connected $n$-dimensional topological manifold $X$
and denote as before $H:=H^1(X)$.
%
Denote by
$$\kK(X)$$
the set of all subgroups $K\leq H$ for which:
\begin{enumerate}
\item[(P1)] $H^*(X_K)$ is finitely generated as a $\ZZ$-module,
\item[(P2)] there is a subgroup of finite index $K^*_0\leq K^*$ whose induced action
on $H^*(X_K)$ is trivial,
\item[(P3)] the Serre spectral sequence over the rationals of the fibration
$X_K\times_{K^*_0} V_K\to V_K/K^*_0$ degenerates in the second page, and
\item[(P4)] $\psi_K^*:H^*(T_K)\to H^*(X)$ is injective.
\end{enumerate}
The set $\kK(X)$ is not empty, as it contains the trivial subgroup $0\leq H$.

Recall that for any subgroup $K\leq H$ we denote by
$r_K:T_H\to T_K$ the morphism induced by restriction.
Define, for each $H^1$-trivial action of a finite group $G$ on $X$,
$$C(G,X):=\min\{|\rho(G)\cap\Ker r_K| \mid K\in\kK(X)\},$$
where $\rho:G\to T_H$ denotes the rotation morphism of the action of $G$ on $X$.

Fix a number $C_1$. We want to prove that
there is a number $C_2$, depending only on $X$ and $C_1$, such that, for
every prime $p$ and every $H^1$-trivial action of a finite $p$-group $G$
on $X$ satisfying $|\Ker\rho|\leq C_1$, we have $C(G,X)\leq C_2$ .

\subsection{Introducing the sequence of groups}
\label{ss:introducing-sequence-groups}
Arguing by contradiction,
let us assume that there exists a sequence of (non necessarily pairwise distinct)
primes $p_1,p_2,\dots$, and groups $G_1,G_2,\dots$, where $G_i$ is a $p_i$-group,
and an $H^1$-trivial action of each $G_i$ on $X$ with rotation morphism $\rho_i:G_i\to T_H$,
satisfying $|\Ker\rho_i|\leq C_1$ for each $i$, and such that
\begin{equation}
\label{eq:C-G-i-X-diverges}
C(G_i,X)\to\infty.
\end{equation}
We will see that this leads to a contradiction.
Denote by
$$\phi_i:X\to T_H$$
a map representing the canonical element in $[X,T_H]$ and satisfying
\begin{equation}
\label{eq:equivariance}
\phi_i(g\cdot x)=\rho_i(g)+\phi_i(x)\qquad\qquad\text{for every $x\in X$ and $g\in G_i$,}
\end{equation}
as given by Theorem \ref{thm:rotation-morphism}.

We say that a subgroup $K\leq H$ is {\it cofree} if $H/K$ is torsion free.

\begin{lemma}
\label{lemma:descending-chain-stabilizes}
Any descending chain of cofree subgroups of $H$ stabilizes.
\end{lemma}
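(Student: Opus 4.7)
The plan is to exploit the fact that $H=H^1(X)$ is a finitely generated free abelian group, so that cofree subgroups are particularly rigid: if $K\leq H$ is cofree then $H/K$ is a finitely generated torsion-free $\ZZ$-module, hence free, hence the short exact sequence $0\to K\to H\to H/K\to 0$ splits. In particular $K$ itself is a direct summand of $H$ of rank $\rank H-\rank(H/K)$, and any cofree subgroup is determined up to rank by a well-defined non-negative integer bounded above by $\rank H$.

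Given a descending chain $K_1\geq K_2\geq K_3\geq\dots$ of cofree subgroups of $H$, I first consider the sequence of ranks $\rank K_1\geq\rank K_2\geq\dots$. Since this is a non-increasing sequence of non-negative integers, it stabilizes: there is some $N$ such that $\rank K_i=r$ for every $i\geq N$.

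Now I claim $K_i=K_{i+1}$ for every $i\geq N$, which will finish the proof. Indeed, for $i\geq N$ the inclusion $K_{i+1}\leq K_i$ induces a short exact sequence $0\to K_{i+1}\to K_i\to K_i/K_{i+1}\to 0$, and since both groups have the same rank the quotient $K_i/K_{i+1}$ is a finite abelian group. On the other hand, the composition $K_i\hookrightarrow H\to H/K_{i+1}$ has kernel exactly $K_{i+1}$, so $K_i/K_{i+1}$ embeds into $H/K_{i+1}$, which is torsion free by the cofreeness of $K_{i+1}$. A finite subgroup of a torsion-free group is trivial, so $K_i=K_{i+1}$, as required.

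There is no genuine obstacle here; the only delicate point is to remember that cofreeness of each $K_i$ must be used not only for $K_i$ itself but crucially for $K_{i+1}$, in order to embed the quotient $K_i/K_{i+1}$ into the torsion-free group $H/K_{i+1}$.
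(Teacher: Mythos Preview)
Your proof is correct and follows essentially the same approach as the paper: both arguments observe that ranks in a descending chain of cofree subgroups are non-increasing and hence stabilize, and then use the embedding $K_i/K_{i+1}\hookrightarrow H/K_{i+1}$ into a torsion-free group (cofreeness of the smaller subgroup) to conclude that once ranks agree the inclusion is an equality. The paper phrases this as the single observation that for cofree $K\leq K'$ one has $b(K)\leq b(K')$ with equality iff $K=K'$, but the content is identical.
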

\begin{proof}
For any finitely generated abelian group $A$ we define
$b(A):=\dim_{\QQ}A\otimes_{\ZZ}\QQ.$
If $K\leq K'$ is an inclusion of cofree subgroups of $H$, then
$K'/K\leq H/K$, so $K'/K$ is torsion free.
Hence there is a section $s:K'/K\to K'$ of the projection map $K'\to K'/K$.
We have $K'=K\oplus s(K'/K)$, so
$b(K')=b(K)+b(s(K'/K))=b(K)+b(K'/K)$. This implies
that if $K\leq K'$ are cofree subgroups of $H$ then $b(K)\leq b(K')$,
with equality if and only if $K=K'$.
This implies the lemma.
\end{proof}

Define
\begin{equation}
\label{eq:def-kK}
\lL=\{K\leq H\mid K\text{ cofree, and }\liminf_{i\to\infty}|\rho_{i}(G_{i})\cap \Ker r_K|<\infty\}.
\end{equation}
Clearly $H\in\lL$, so $\lL$ is nonempty.
Choose a minimal element
$$K\in\lL$$ with respect to inclusion.
Such minimal element exists by Lemma \ref{lemma:descending-chain-stabilizes}.

Choose any sequence of naturals $i_1<i_2<i_3<\dots$ such that
$|\rho_{i_j}(G_{i_j}) \cap \Ker r_K|$ is bounded as $j\to\infty$.
We replace the sequence $(G_i)_i$ by the subsequence $(G_{i_j})_j\subseteq(G_i)$. In other words,
we redefine
$G_j:=G_{i_j}$
and
$\phi_j:=\phi_{i_j}$, $\rho_j:=\rho_{i_j}$ and $p_j:=p_{i_j}$.

In the remainder of this section we are going to prove that $K\in\kK(X)$.
This will give the desired contradiction, as we will see in Subsection
\ref{ss:conclusion-thm:main-rotation-morphism}.
Property (P1) will be proved in
Lemma \ref{lemma:finite-generated-cohomology-over-integers} using the main technical result in
\cite{Mu2021};
property (P2) will be proved in \S\ref{ss:def-K-0}, using the result in Section \ref{s:proof-thm:trivial-module-structure}.
The proof of property (P3) is the most involved result in this section. It has a geometric and an algebraic part.
Subsections \ref{ss:preparing-ss}, \ref{ss:groups-reenter} and \ref{ss:diagram-fibrations}
contain the geometric part of the argument. Subsection \ref{ss:sketch-argument}
sketches the algebraic part; some readers might prefer to begin reading this subsection after the proof of (P2).
Subsection \ref{ss:ignoring-torsion} develops the tools for the algebraic part, and
the argument is concluded in Subsection \ref{ss:conclusion-proof} (see Theorem \ref{thm:degeneracio-ssSerre}).
Finally, (P4) will follow from (P3) and the discussion in Subsection \ref{ss:preparing-ss}.

\subsection{Subgroups of $\rho_{K,i}(G_i)$ isomorphic to $(\ZZ/m_i)^k$, where $K\simeq\ZZ^k$}
\label{ss:subgroups-of-rho-K-i-G-i}

Let
$$C:=\sup_i |\rho_i(G_i)\cap \Ker r_{K}|.$$
Denote
$$\rho_{K,i}:=r_K\circ \rho_i:G_i\to T_K$$
We have
\begin{equation}
\label{eq:size-Gamma-d-i}
|\rho_{K,i}(G_i)|=\frac{|\rho_i(G_i)|}{|\rho_i(G_i)\cap\Ker r_K|}\geq |\rho_i(G_i)|/C,
\end{equation}
which, combined with the bound $|\Ker\rho_i|\leq C_1$, implies
\begin{equation}
\label{eq:una-estrella}
|\Ker\rho_{K,i}|\leq C_2:=C\cdot C_1.
\end{equation}

Let $k=\dim_{\QQ}K\otimes_{\ZZ}\QQ$,
so that $K\simeq\ZZ^k$. Choose an isomorphism
$$\eta:\ZZ^k\to K.$$
Denote by $e:T^k\to\Hom(\ZZ^k,S^1)$ the isomorphism
sending $(\theta_1,\dots,\theta_k)\in T^k=\RR^k/\ZZ^k$ to the morphism
$\ZZ^k\ni (a_1,\dots,a_k)\mapsto\sum_k a_i\theta_i\in\RR/\ZZ=S^1$.
The morphism of groups
$$\xi:T_{K}\stackrel{\simeq}{\longrightarrow} T^k$$
sending $\chi\in T_{K}$ to $e^{-1}(\chi\circ\eta)$ is an isomorphism.
Denote by $S_j$
the subgroup of $T^k$ given by the $j$-th factor, that is,
$$S_j=\{(\theta_1,\dots,\theta_k)\in T^r\mid \theta_i=0\text{ for every $i\neq j$}\}.$$

\begin{lemma}
\label{lemma:large-cyclic-groups}
For each $j$ we have
$|\rho_{K,i}(G_i)\cap \xi^{-1}(S_j)|\to\infty$ as $i\to\infty$.
\end{lemma}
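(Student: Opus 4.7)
The plan is to argue by contradiction, exploiting the minimality of $K$ in $\lL$. Suppose that for some index $j\in\{1,\dots,k\}$ the sequence $|\rho_{K,i}(G_i)\cap\xi^{-1}(S_j)|$ fails to tend to infinity. After passing to a further subsequence, we may assume it is bounded by some constant $M$.

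The first step is to produce a strictly smaller cofree subgroup of $H$ whose kernel in $T_H$ is responsible for detecting $\xi^{-1}(S_j)$. The natural candidate is
$$K' := \eta\Bigl(\bigoplus_{\ell\neq j}\ZZ e_\ell\Bigr),$$
a rank-$(k-1)$ direct summand of $K$. The quotient $K/K'\simeq\ZZ$ is torsion free, and combined with the cofreeness of $K$ in $H$, the short exact sequence $0\to K/K'\to H/K'\to H/K\to 0$ shows that $H/K'$ is torsion free, so $K'$ is cofree in $H$.

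The second step identifies $\Ker r_{K'}\leq T_H$ in terms of $S_j$. The restriction $r_{K'}$ factors as $T_H\xrightarrow{r_K}T_K\xrightarrow{r^K_{K'}}T_{K'}$, and under the isomorphism $\xi:T_K\to T^k$ the second arrow becomes the projection $T^k\to T^{k-1}$ forgetting the $j$-th coordinate, whose kernel is exactly $S_j$. Hence $\Ker r_{K'}=r_K^{-1}(\xi^{-1}(S_j))$. Since the $r_K$-fibers inside $\rho_i(G_i)$ have size at most $|\rho_i(G_i)\cap\Ker r_K|\leq C$, we obtain
$$|\rho_i(G_i)\cap\Ker r_{K'}|\leq C\cdot|\rho_{K,i}(G_i)\cap\xi^{-1}(S_j)|\leq CM$$
along the chosen subsequence.

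Finally, since $\liminf$ along any subsequence dominates $\liminf$ along the ambient sequence, this bound forces $\liminf_{i\to\infty}|\rho_i(G_i)\cap\Ker r_{K'}|<\infty$, placing $K'$ in $\lL$. But $\rank K'=k-1<k=\rank K$ gives the strict inclusion $K'\subsetneq K$, contradicting the minimality of $K$. I do not anticipate a serious obstacle: the only delicate points are the subsequence bookkeeping (noting that minimality of $K$ is preserved when $(G_i)$ is replaced by a subsequence, because $\lL$ only shrinks) and the correct identification of $\Ker r^K_{K'}$ with $\xi^{-1}(S_j)$, both of which are routine once the factorization $r_{K'}=r^K_{K'}\circ r_K$ is in place.
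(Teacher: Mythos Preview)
Your proposal is correct and follows essentially the same route as the paper: argue by contradiction, take $K'=\eta(\bigoplus_{\ell\neq j}\ZZ e_\ell)$, verify cofreeness via the short exact sequence $0\to K/K'\to H/K'\to H/K\to 0$, identify $\Ker r_{K'}=r_K^{-1}(\xi^{-1}(S_j))$, and bound $|\rho_i(G_i)\cap\Ker r_{K'}|$ by $C\cdot M$ to place $K'$ in $\lL$, contradicting minimality. Your handling of the subsequence bookkeeping (noting that $\lL$ only shrinks under subsequences, so minimality of $K$ is preserved, and that a bound along a subsequence controls the ambient $\liminf$) is in fact more explicit than the paper's.
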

\begin{proof}
We argue by contradiction.
Suppose that for some $j$ there exists $C'$ and a sequence
$i_v\to\infty$ such that
$|\rho_{K,{i_v}}(G_{i_v})\cap \xi^{-1}(S_j)|\leq C'$ for every $v$.
Define 
$$K'=\eta(\{(a_1,\dots,a_r)\in\ZZ^k\mid a_j=0\})\leq K.$$
We have $K/K'\simeq\ZZ$ and there is an exact sequence
$$0\to K/K'\to H/K'\to H/K\to 0.$$
Since both  $K/K'$ and $H/K$ are free abelian, so is $H/K'$, and consequently
$K'$ is cofree.
Denote by $r_{K,K'}:T_{K}\to T_{K'}$ the restriction map.
Using
(\ref{eq:size-Gamma-d-i}) we may bound:
\begin{multline*}
|\rho_{K',{i_v}}(G_{i_v})|=\frac{|\rho_{K,{i_v}}(G_{i_v})|}{|\rho_{K,{i_v}}(G_{i_v})\cap \Ker r_{K,K'}|}=
\frac{|\rho_{K,{i_v}}(G_{i_v})|}{|\rho_{K,{i_v}}(G_{i_v})\cap \xi^{-1}(S_j)|}\geq \\
\geq
\frac{|\rho_{K,{i_v}}(G_{i_v})|}{C'}\geq\frac{|\rho_{K,{i_v}}(G_{i_v})|}{CC'}.
\end{multline*}
This implies, by (\ref{eq:size-Gamma-d-i}), that
$|\rho_{K,{i_v}}(G_{i_v})\cap \Ker r_{K'}|\leq CC'$ for every $v$,
and consequently that $K'\in\lL$. This contradicts the minimality of $K$, so our initial
assumption cannot hold and the proof of the lemma is complete.
\end{proof}

For each integer $m$ we denote by $T_K[m]$ the $m$-torsion of $T_K$.

\begin{lemma}
\label{lemma:discrete-tori-in-rho-G}
There is a sequence of integers $m_1,m_2,\dots$, with $m_i\to\infty$,
such that $T_K[m_i]\leq \rho_{K,i}(G_i)$ for each $i$.
\end{lemma}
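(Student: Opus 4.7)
The plan is to leverage the fact that each $G_i$ is a $p_i$-group, so $\rho_{K,i}(G_i)$ is a finite abelian $p_i$-subgroup of $T_K \simeq T^k$, and combine this with Lemma \ref{lemma:large-cyclic-groups}, which controls the size of the intersections of $\rho_{K,i}(G_i)$ with each of the $k$ coordinate subgroups $\xi^{-1}(S_j)$.

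First I would observe that, since $\rho_{K,i}(G_i)$ is a finite $p_i$-group, its intersection with each $\xi^{-1}(S_j) \simeq S^1$ is a finite cyclic $p_i$-group. But any finite cyclic $p_i$-subgroup of $S^1$ of order $p_i^e$ coincides with the full $p_i^e$-torsion subgroup of $S^1$, since $S^1$ has a unique subgroup of each finite order. Writing $p_i^{e_{i,j}} := |\rho_{K,i}(G_i) \cap \xi^{-1}(S_j)|$, we therefore have the equality
\[
\rho_{K,i}(G_i) \cap \xi^{-1}(S_j) = \xi^{-1}(S_j)[p_i^{e_{i,j}}].
\]

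Next, setting $e_i := \min_{1 \leq j \leq k} e_{i,j}$, Lemma \ref{lemma:large-cyclic-groups} applied to each of the (finitely many) indices $j = 1, \dots, k$ guarantees that $p_i^{e_{i,j}} \to \infty$ as $i \to \infty$ for each $j$, whence $p_i^{e_i} \to \infty$. I would then define $m_i := p_i^{e_i}$, so that $m_i \to \infty$.

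Finally, it remains to check $T_K[m_i] \leq \rho_{K,i}(G_i)$. Via $\xi$, this is equivalent to $T^k[m_i] \leq \xi(\rho_{K,i}(G_i))$. Since $T^k[m_i]$ decomposes as the internal direct sum $\bigoplus_{j=1}^k S_j[m_i]$, and since $e_i \leq e_{i,j}$ yields
\[
S_j[m_i] = S_j[p_i^{e_i}] \subseteq S_j[p_i^{e_{i,j}}] = \xi\bigl(\rho_{K,i}(G_i) \cap \xi^{-1}(S_j)\bigr) \subseteq \xi(\rho_{K,i}(G_i)),
\]
the generators of $T^k[m_i]$ lie in $\xi(\rho_{K,i}(G_i))$, hence so does the whole subgroup. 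There is no real obstacle here beyond correctly assembling these observations; the only slightly delicate point is remembering that a finite $p$-subgroup of $S^1$ is automatically the full torsion subgroup of its order, which is what lets the coordinatewise torsion bounds from Lemma \ref{lemma:large-cyclic-groups} combine into the full $m_i$-torsion of $T_K$.
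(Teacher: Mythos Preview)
Your proof is correct and follows essentially the same approach as the paper: both take $m_i$ to be the minimum (equivalently, since all terms are powers of $p_i$, the GCD) of the orders $|\rho_{K,i}(G_i)\cap\xi^{-1}(S_j)|$, use that a finite subgroup of $S^1$ is the full torsion subgroup of its order to get $\xi^{-1}(S_j)[m_i]\leq\rho_{K,i}(G_i)$, assemble these into $T_K[m_i]\leq\rho_{K,i}(G_i)$, and invoke Lemma~\ref{lemma:large-cyclic-groups} for $m_i\to\infty$. Your write-up is slightly more explicit about why the coordinatewise inclusions yield the global one, but the argument is the same.
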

\begin{proof}
Let $m_i$ be the GCD of the numbers $|\rho_{K,i}(G_i)\cap\xi^{-1}(S_j)|$
for $j=1,\dots,k$. Then $T_K[m_i]\leq\rho_{K,i}(G_i)$. Since $G_i$
is a $p_i$-group, so is $\rho_{K,i}(G_i)$, which is a quotient of $G_i$.
So each number $|\rho_{K,i}(G_i)\cap\xi^{-1}(S_j)|$ is a power of $p_i$.
Consequently, $m_i=\min_j |\rho_{K,i}(G_i)\cap\xi^{-1}(S_j)|$. By
Lemma \ref{lemma:large-cyclic-groups}, it follows that $m_i\to\infty$.
\end{proof}

\subsection{An abelian cover of $X$ with finitely generated homology}
\label{ss:abelian-covers}
We begin recalling some definitions and results from \cite[\S 8]{Mu2021}.
Denote by $\pi:\RR^k\to T^k=\RR^k/\ZZ^k$ the quotient map.
Given any continuous map $\zeta:X\to T^k$ we define
$$X_{\zeta}=\{(x,u)\in X\times\RR^k\mid \zeta(x)=\pi(u)\}.$$
The projection
$$\pi_{\zeta}:X_{\zeta}\to X,\qquad \pi_{\zeta}(x,u)=x$$
is an unramified covering map and can be seen as
a principal $\ZZ^k$-bundle, where the action of $\ZZ^k$ on $X_{\zeta}$
is defined, for $\nu\in \ZZ^k$ and $(x,u)\in X_{\zeta}$,
as $\nu\cdot (x,u)=(x,u+\nu)$. Of course, $\pi_{\zeta}$ is the pullback
of the $\ZZ^k$-bundle $\pi$ via $\zeta$. Standard results on fiber bundles imply
the following (see \cite[Lemma 8.1]{Mu2021}).

\begin{lemma}
\label{lemma:homotopic-bundles-isomorphic}
If two continuous maps $\zeta,\zeta':X\to T^k$ are homotopic then there
is a $\ZZ^k$-equivariant homeomorphism $h:X_{\phi}\to X_{\phi'}$
such that $\pi_{\zeta}=\pi_{\zeta'}\circ h$.
\end{lemma}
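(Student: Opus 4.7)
The plan is to invoke unique path lifting for the covering map $\pi:\RR^k\to T^k$ to transport fibers along a homotopy from $\zeta$ to $\zeta'$. Concretely, I would pick a homotopy $H:X\times[0,1]\to T^k$ with $H(\cdot,0)=\zeta$ and $H(\cdot,1)=\zeta'$, and use it to construct $h:X_\zeta\to X_{\zeta'}$ pointwise.

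For each $(x,u)\in X_\zeta$, consider the path $\gamma_x:[0,1]\to T^k$ defined by $\gamma_x(t)=H(x,t)$. Since $\gamma_x(0)=\zeta(x)=\pi(u)$ and $\pi$ is a covering map, there is a unique continuous lift $\tilde\gamma_{x,u}:[0,1]\to\RR^k$ with $\tilde\gamma_{x,u}(0)=u$ and $\pi\circ\tilde\gamma_{x,u}=\gamma_x$. I then set
\[
h(x,u):=(x,\tilde\gamma_{x,u}(1)).
\]
By construction $\pi(\tilde\gamma_{x,u}(1))=H(x,1)=\zeta'(x)$, so $h(x,u)\in X_{\zeta'}$, and clearly $\pi_{\zeta'}\circ h=\pi_\zeta$. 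The $\ZZ^k$-equivariance is immediate: for $\nu\in\ZZ^k$, the translated path $t\mapsto\tilde\gamma_{x,u}(t)+\nu$ is a lift of $\gamma_x$ starting at $u+\nu$, so by uniqueness $\tilde\gamma_{x,u+\nu}(t)=\tilde\gamma_{x,u}(t)+\nu$, giving $h(\nu\cdot(x,u))=\nu\cdot h(x,u)$.

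Continuity of $h$ should follow from the continuous dependence of path lifts on their initial data. The cleanest way to formalize this is to observe that $H^*\pi:(X\times[0,1])_H\to X\times[0,1]$ is itself a covering, so it is a fiber bundle over a paracompact base with contractible factor $[0,1]$, and hence trivializes over each slice; pulling back the trivialization to $X\times\{0\}$ and $X\times\{1\}$ identifies $X_\zeta$ and $X_{\zeta'}$ as $\ZZ^k$-bundles over $X$, and the identification agrees with the one defined above.

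Finally, to see that $h$ is a homeomorphism, I would apply the same construction to the reversed homotopy $H'(x,t):=H(x,1-t)$ to obtain $h':X_{\zeta'}\to X_\zeta$; uniqueness of lifts together with the fact that reversing the concatenation $H\ast H'$ yields the constant homotopy at $\zeta$ shows $h'\circ h=\Id$ and $h\circ h'=\Id$. The only mildly delicate step is the continuity of $h$; everything else is formal from unique path lifting, and I expect no essential obstacle.
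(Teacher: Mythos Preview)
Your proposal is correct and is essentially a spelled-out version of what the paper invokes: the paper does not prove this lemma in-line but simply says ``standard results on fiber bundles imply the following'' and cites \cite[Lemma 8.1]{Mu2021}, i.e.\ the general fact that homotopic maps pull back isomorphic principal bundles. Your path-lifting construction is exactly the usual way one proves that fact for covering spaces (and your remark about trivializing $(X\times[0,1])_H$ over the interval direction is the standard bundle-theoretic packaging of the same idea), so there is no meaningful difference in approach---you have just unpacked the reference.
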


The action of $\ZZ^k$ on $X_{\phi}$ induces an action of $\ZZ^k$ on $H_*(X_{\phi})$
by group automorphisms, which defines on $H_*(X_{\phi})$ a structure of
$\ZZ[\ZZ^k]$-module. 
This is \cite[Lemma 9.2]{Mu2021}:

\begin{lemma}
\label{lemma:finite-generated-cohomology-over-group-ring}
$H_*(X_{\phi})$ is finitely generated as a $\ZZ[\ZZ^k]$-module.
\end{lemma}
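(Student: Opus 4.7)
The plan is to combine a finite $\ZZ^k$-equivariant cell structure on $X_\phi$ with the Noetherianity of the group ring, and then transfer the resulting finiteness from homology to cohomology using Poincar\'e--Lefschetz duality for $X_\phi$ as an open manifold.

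First, since $X$ is a closed topological manifold, it has the homotopy type of a finite CW complex, so up to homotopy equivalence we may assume $X$ itself carries such a structure. Pulling back along the covering map $\pi_\phi\colon X_\phi\to X$ equips $X_\phi$ with a $\ZZ^k$-equivariant CW structure in which each open $n$-cell of $X$ lifts to exactly one free $\ZZ^k$-orbit of $n$-cells. The associated cellular chain complex $C_*(X_\phi;\ZZ)$ then becomes a bounded chain complex of finitely generated free $R$-modules, where $R:=\ZZ[\ZZ^k]\cong\ZZ[t_1^{\pm 1},\dots,t_k^{\pm 1}]$, with one free generator per cell of $X$.

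By Hilbert's basis theorem, $R$ is Noetherian, so finite generation over $R$ is preserved under subquotients. Hence the homology $H_*(X_\phi;\ZZ)$, being computed from a bounded complex of finitely generated $R$-modules, is finitely generated over $R$. To transfer this finiteness to cohomology, I would use that $X_\phi$ is an open $n$-manifold: Poincar\'e--Lefschetz duality (with orientation sheaf twist if needed) provides an $R$-equivariant isomorphism $H^i(X_\phi;\ZZ)\cong H^{BM}_{n-i}(X_\phi;\ZZ)$, naturality supplying the equivariance. Identifying the Borel--Moore homology with the homology of $X$ with coefficients in the coinduced local system associated to $R^\vee:=\Hom_\ZZ(R,\ZZ)$, one runs a hyperhomology spectral sequence
\[
E^2_{p,q}=\Tor^R_p\bigl(H_q(X_\phi;\ZZ),R^\vee\bigr)\Rightarrow H^{BM}_{p+q}(X_\phi;\ZZ),
\]
and argues that each $E^2$-entry is finitely generated over $R$ using that $R$ has finite global dimension and that each $H_q(X_\phi;\ZZ)$ admits a bounded finitely generated free resolution.

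The main obstacle is this final step: the coefficient module $R^\vee$ is not itself finitely generated over $R$, so Noetherianity alone does not suffice for $C_*(X;R^\vee)$. The argument must exploit the rigidity of the equivariant chain complex of $X_\phi$---in particular the cocompactness of the $\ZZ^k$-action---to guarantee that the $\Tor$-terms collapse to finitely generated modules, from which the abutment inherits finite generation.
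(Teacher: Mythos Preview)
The paper does not give a proof of this lemma; it simply records it as \cite[Lemma~9.2]{Mu2021}. So there is no in-paper argument to compare your proposal against, only the citation.

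Your treatment of homology is correct and standard: the cellular chain complex $C_*(X_\phi)$ is a bounded complex of finitely generated free $R$-modules ($R=\ZZ[\ZZ^k]$), and Noetherianity of $R$ gives finite generation of $H_*(X_\phi)$ over $R$.

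The difficulty you flag in passing to cohomology is genuine, and your last paragraph does not close it. In your spectral sequence one has for instance $E^2_{0,q}=H_q(X_\phi)\otimes_R R^\vee$; already when $H_q(X_\phi)$ has a free $R$-summand this is a copy of $R^\vee$, which is uncountable and hence not finitely generated over $R$. ``Cocompactness of the $\ZZ^k$-action'' is exactly what produced the finitely generated free $R$-chain complex in the first place, so invoking it again gives nothing new. It is worth stressing that the lemma genuinely \emph{fails} for finite CW complexes that are not manifolds: for $X=S^1\vee S^2$ with $\phi$ the fold map to $S^1$, the cover $X_\phi$ is a real line with a $2$-sphere attached at every integer, and $H^2(X_\phi)\cong\prod_{\ZZ}\ZZ$ is not finitely generated over $\ZZ[t,t^{-1}]$. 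So Poincar\'e--Lefschetz duality for the open manifold $X_\phi$ alone cannot suffice: it merely translates the problem to Borel--Moore homology, which has the same non-finitely-generated cochain model.

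What your argument is missing is a second use of duality, coming from the base rather than the fibre. The projection $q\colon X_\phi\to\RR^k$ onto the second factor of $X_\phi\subset X\times\RR^k$ is \emph{proper} because $X$ is compact; hence $Rq_!\underline{\ZZ}=Rq_*\underline{\ZZ}$, and both $H^*(X_\phi)$ and $H^*_c(X_\phi)$ are hypercohomology of $\RR^k$ with coefficients in this one $\ZZ^k$-equivariant complex. The passage between the two is then controlled by the Poincar\'e duality of the group $\ZZ^k$ (equivalently, of $T^k$ as a closed $k$-manifold), and it is this that forces $H^*(X_\phi)$ to be as small as $H^*_c(X_\phi)\cong H_{n-*}(X_\phi)$. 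You should consult \cite{Mu2021} for the precise implementation.
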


Choose a map $\psi:X\to T_H$ representing the canonical element in $[X,T_H]$ (see Lemma \ref{lemma:canonical-class}).
This map will be fixed for the remainder of this section.
Let $\psi_K:=r_K\circ \psi$.
Define
$$\Psi=\xi\circ \psi_K:X\to T^k.$$
(recall that the isomorphism
$\xi:T_K\to T^k$ is defined in Subsection \ref{ss:subgroups-of-rho-K-i-G-i}).
As explained above, $X_{\Psi}$ carries a free action of
$\ZZ^k$.

We identify the group ring $\ZZ[\ZZ^k]$ with the additive group $\Map_0(\ZZ^k,\ZZ)$
of finitely supported maps $\ZZ^k\to\ZZ$,
with the ring structure given by convolution.
Namely, the product of two elements $f,g\in\Map_0(\ZZ^k,\ZZ)$ is the map
$fg\in\Map_0(\ZZ^k,\ZZ)$ defined by the condition that, for every $v\in\ZZ^k$,
$$fg(v)=\sum_{u\in\ZZ^k}f(u)g(v-u).$$

Let $e_1,\dots,e_k$ denote the canonical basis of $\ZZ^k$,
and let $t_i:\ZZ^k\to\ZZ$ denote the characteristic function of $\{e_i\}\subset\ZZ^k$.
Then $t_i\in\Map_0(\ZZ^k,\ZZ)=\ZZ[\ZZ^k]$, and
the natural morphism
$$A:=\ZZ[t_1^{\pm 1},\dots,t_k^{\pm 1}]\to\ZZ[\ZZ^k]$$
is an isomorphism of rings. So the action of $\ZZ^k$ on $X_\Psi$
allows us to look at $H^*(X_{\Psi})$ as an $A$-module.

Let
\begin{equation}
\label{eq:isomorfisme-theta}
\theta:K^*\to\ZZ^k
\end{equation}
be the composition of
$\eta^*:K^*=\Hom(K,\ZZ)\to\Hom(\ZZ^k,\ZZ)$ with the isomorphism
$\Hom(\ZZ^k,\ZZ)\to\ZZ^k$ sending any $\alpha\in\Hom(\ZZ^k,\ZZ)$
to $(\alpha(e_1),\dots,\alpha(e_k))$. Then $\theta$ is an isomorphism of groups.

Since $\xi$ is an isomorphism of tori, and since $X_K=X_{\psi_K}$, we have a homeomorphism
\begin{equation}
\label{eq:X-Psi-X-K}
X_K\stackrel{\cong}{\longrightarrow} X_\Psi,
\end{equation}
which is equivariant with respect to the morphism $\theta$ defined above (\ref{eq:isomorfisme-theta}).

\begin{lemma}
\label{lemma:arrels-de-t-j}
For every $1\leq j\leq k$ there exists a nonzero integer $d_j$,
a sequence of integers $r_{j,i}$ satisfying $r_{j,i}\to\infty$ as $i\to\infty$,
and $A$-module automorphisms $w_{j,i}:H_*(X_{\Psi})\to H_*(X_{\Psi})$
such that $w_{j,i}^{r_{j,i}}$ coincides with multiplication by $t_j^{d_j}$.
\end{lemma}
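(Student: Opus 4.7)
The plan is to produce the $w_{j,i}$ by lifting suitably chosen elements of $G_i$ to self-homeomorphisms of the cover $X_\Psi$ and then passing to cohomology. As a preliminary step, set $\Psi_i := \xi \circ r_K \circ \phi_i : X \to T^k$. Since $\phi_i$ is homotopic to $\psi$ by property (R1), the map $\Psi_i$ is homotopic to $\Psi$, and Lemma \ref{lemma:homotopic-bundles-isomorphic} supplies a $\ZZ^k$-equivariant homeomorphism $X_\Psi \cong X_{\Psi_i}$. The equivariance property (\ref{eq:equivariance}) translates into
$$\Psi_i(g \cdot x) = \xi(\rho_{K,i}(g)) + \Psi_i(x) \qquad \text{for all } g \in G_i,\ x \in X,$$
and this is the key feature that allows elements of $G_i$ to be lifted to the cover.

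For each $j \in \{1, \dots, k\}$, the inclusion $T_K[m_i] \leq \rho_{K,i}(G_i)$ from Lemma \ref{lemma:discrete-tori-in-rho-G} provides an element $\gamma_{j,i} \in G_i$ such that $\xi(\rho_{K,i}(\gamma_{j,i})) = [e_j/m_i] \in T^k$. Define a self-homeomorphism
$$\widetilde{w}_{j,i} : X_{\Psi_i} \to X_{\Psi_i}, \qquad (x, u) \mapsto (\gamma_{j,i} \cdot x,\ u + e_j/m_i),$$
which is well defined because the displayed equivariance formula guarantees $\pi(u + e_j/m_i) = \Psi_i(\gamma_{j,i} \cdot x)$. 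A direct check shows that $\widetilde{w}_{j,i}$ commutes with the translation action of $\ZZ^k$ on the second coordinate, hence the induced map $w_{j,i}$ on $H^*(X_{\Psi_i}) \simeq H^*(X_\Psi)$ is an automorphism of $A$-modules.

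For the iterate, observe that $\widetilde{w}_{j,i}^r(x, u) = (\gamma_{j,i}^r \cdot x,\ u + r e_j/m_i)$. Since $\rho_{K,i}(\gamma_{j,i}^{m_i}) = m_i \cdot \rho_{K,i}(\gamma_{j,i}) = 0$, the element $\gamma_{j,i}^{m_i}$ lies in $\Ker \rho_{K,i}$, a subgroup of order at most $C_2$ by (\ref{eq:una-estrella}). Set $d_j := \lcm(1, 2, \dots, C_2)$ and $r_{j,i} := m_i d_j$. By Lagrange's theorem $(\gamma_{j,i}^{m_i})^{d_j}$ equals the identity in $G_i$, so
$$\widetilde{w}_{j,i}^{r_{j,i}}(x, u) = (x,\ u + d_j e_j),$$
i.e., $\widetilde{w}_{j,i}^{r_{j,i}}$ is the deck transformation by $d_j e_j \in \ZZ^k$. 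Under the ring identification $\ZZ[\ZZ^k] \simeq A$, this deck transformation induces multiplication by $t_j^{\pm d_j}$ on $H^*(X_\Psi)$; with the appropriate sign, we obtain the required nonzero integer $d_j$. Finally $r_{j,i} = m_i d_j \to \infty$ because $m_i \to \infty$ by Lemma \ref{lemma:discrete-tori-in-rho-G}.

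The main subtle point is ensuring that $d_j$ can be chosen independently of $i$. This uniformity is exactly what the bound $|\Ker \rho_{K,i}| \leq C_2$ from (\ref{eq:una-estrella}) delivers; without the minimality of $K \in \lL$ (which produced this uniform bound in the first place), the order of $\gamma_{j,i}^{m_i}$ could a priori grow with $i$, and no single power $t_j^{d_j}$ would be simultaneously realized by all iterates $w_{j,i}^{r_{j,i}}$. The rest of the argument is routine bookkeeping once the uniform bound is exploited.
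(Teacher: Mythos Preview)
Your proof is correct and follows essentially the same approach as the paper: lift a suitably chosen element of $G_i$ to a $\ZZ^k$-equivariant self-homeomorphism of the cover, then exploit the uniform bound $|\Ker\rho_{K,i}|\leq C_2$ to make the exponent $d_j$ independent of $i$. The only cosmetic differences are that the paper uses $s_{j,i}=|\rho_{K,i}(G_i)\cap\xi^{-1}(S_j)|$ from Lemma~\ref{lemma:large-cyclic-groups} in place of your $m_i$ from Lemma~\ref{lemma:discrete-tori-in-rho-G}, and takes $d_j=C_2!$ rather than $\lcm(1,\dots,C_2)$; your version is in fact slightly cleaner, since your $d_j$ does not depend on $j$ and you avoid the auxiliary morphism $z_{j,i}$ that the paper introduces. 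The sign ambiguity you flag is not actually present (the module structure is set up so that translation by $e_j$ acts as $t_j$), but noting it does no harm.
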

\begin{proof}
Fix some integer $1\leq j\leq k$. By Lemma \ref{lemma:large-cyclic-groups}, we have
$$s_{j,i}:=|\rho_{K,i}(G_i)\cap \xi^{-1}(S_j)|\to\infty\quad\text{as}\quad i\to\infty.$$
The maps
$$\Phi_i:=\xi\circ r_{K}\circ\phi_i:X\to T^k$$
are homotopic to $\Psi$ for every $i$, because $\Psi=\xi\circ r_K\circ\psi$, where
$\psi$ represents the canonical element in $[X,T_H]$, and hence $\phi_i$
and $\psi$ are homotopic.
Hence, Lemma \ref{lemma:homotopic-bundles-isomorphic} implies the existence of isomorphisms
\begin{equation}
\label{eq:X-Psi-X-Phi-isomorfes}
H_*(X_{\Psi})\simeq H_*(X_{\Phi_i})\qquad\text{as $A$-modules}.
\end{equation}
The group $\rho_{K,i}(G_i)\cap \xi^{-1}(S_j)$ is cyclic, because it is a finite subgroup
of $\xi^{-1}(S_j)\simeq S^1$. Let $\gamma_{j,i}$ be the generator of $\rho_{K,i}(G_i)\cap \xi^{-1}(S_j)$
characterized by the equality
$$\xi(\gamma_{j,i})=[e_j/s_{j,i}],$$
where, for any $\alpha\in\RR^k$, $[\alpha]$ is the class of $\alpha$ in $\RR^k/\ZZ^k=T^k$.
Choose $g_{j,i}\in G_{i}$ such that
$$\gamma_{j,i}=\rho_{K,i}(g_{j,i}).$$
Let
$o_{j,i}\in\NN$
be the order of $g_{j,i}$, which is a multiple of $s_{j,i}$.
Since $g_{j,i}^{s_{j,i}}\in\Ker \rho_{K,i}$, by (\ref{eq:una-estrella})
the order of $g_{j,i}^{s_{j,i}}$ is not bigger than $C_2$.
Consequently,
\begin{equation}
\label{eq:dues-estrelles}
o_{j,i}/s_{j,i}\leq C_2.
\end{equation}
Let
$f_{j,i}:X\to X$ be the map defined as
$f_{j,i}(x)=g_{j,i}\cdot x$ for every $x$. 
Define a lift of $f_{j,i}$,
$$F_{j,i}:X_{\Psi}\to X_{\Psi},$$
by setting
$$F_{j,i}(x,u)=\left(f_{j,i}(x),u+\frac{e_j}{s_{j,i}}\right)$$
(recall that $X_{\Psi}$ is a subset of $X\times\RR^k$).
The previous expression
does indeed define an automorphism of $X_{\Psi}$, because if $(x,u)\in X\times\RR^k$
belongs to $X_\Psi$ then, by (\ref{eq:equivariance}), so does $F_{j,i}(x,u)$.
The map $F_{j,i}$ commutes with the action of $\ZZ^k$ on $X_{\Psi}$. Hence, if we denote by
$$v_{j,i}:H_*(X_{\Phi_i})\to H_*(X_{\Phi_i})$$
the morphism induced by $F_{j,i}$, then $v_{j,i}$ is a morphism of
$A$-modules. It follows from the definition of
$F_{j,i}$ that
$v_{j,i}^{o_{j,i}}$ coincides with the map given by multiplication by $t_j^{o_{j,i}/s_{j,i}}$.
We have
$\Phi_i\circ f_{j,i}^{s_{j,i}}=\Phi_i$. This implies that
the map
$$h_{j,i}:X\times\RR^k\to X\times\RR^k,\qquad h_{j,i}(x,u)=\left(f_{j,i}^{s_{j,i}}(x),u\right)$$
preserves $X_{\Phi_i}$ and commutes with the $\ZZ^k$ action, and hence induces a morphisms
of $A$-modules
$$z_{j,i}:H_*(X_{\Phi_i})\to H_*(X_{\Phi_i})$$
which can be identified with $t_j^{-1}v_{j,i}^{s_{j,i}}$. Indeed, if we denote by
$\tau_j:X\times \RR^k\to X\times \RR^k$ the translation $\tau_j(x,u)=(x,u+e_j)$, then
$\tau_j$ preserves $X_{\Phi_i}$ and we have
$$F_{j,i}^{s_{j,i}}=\tau_j\circ h_{j,i}.$$
The morphism of $H_*(X_{\Phi_i})$ induced by $\tau_j$ is multiplication by $t_j$,
so $v_{j,i}^{s_{j,i}}=t_j z_{j,i}$, or equivalently $z_{j,i}=t_j^{-1}v_{j,i}^{s_{j,i}}$.
Finally, (\ref{eq:dues-estrelles}) implies that
$$z_{j,i}^{C_2!}=\left(t_j^{-1}v_{j,i}^{s_{j,i}}\right)^{C_2!}=1$$
for every $i$. This now allows us to write, setting $d_j:=C_2!$ and
$r_{j,i}=s_{j,i}C_2!$,
$$v_{j,i}^{r_{j,i}}=t_j^{d_j}.$$
Via the isomorphism (\ref{eq:X-Psi-X-Phi-isomorfes}), $v_{j,i}$ corresponds
to an automorphism of $A$-modules
$$w_{j,i}:H_*(X_{\Psi})\to H_*(X_{\Psi})$$
satisfying $w_{j,i}^{r_{j,i}}=t_j^{d_j}$,
so the proof of the lemma is complete.
\end{proof}


The following lemma implies that $K$ satisfies property (P1) in the definition of $\kK(X)$:

\begin{lemma}
\label{lemma:finite-generated-cohomology-over-integers}
$H^*(X_K)$ is finitely generated as a $\ZZ$-module.
\end{lemma}
\begin{proof}
Combining the previous lemma with \cite[Corollary 6.3]{Mu2021}, we deduce that
$H_*(X_{\Psi})\simeq H_*(X_K)$ is finitely generated as a $\ZZ$-module.
By the universal coefficients theorem, it follows that $H^*(X_K)$ is also
a finitely generated $\ZZ$-module.
\end{proof}

Arguing as in the proof of \cite[Lemma 2.6]{Mu2016}, we conclude:

\begin{lemma}
\label{lemma:Minkowski-X-K}
There is a constant $C_{\Min}$ such that any finite subgroup $G\leq\Aut(H^*(X_K))$
satisfies $|G|\leq C_{\Min}$.
\end{lemma}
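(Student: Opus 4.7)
The plan is to reduce to the classical Minkowski lemma by exploiting the finite generation of $M := H^*(X_K)$ as a $\ZZ$-module, which is the content of Lemma \ref{lemma:finite-generated-cohomology-over-integers}. Write $M = F \oplus T$, where $F \cong \ZZ^N$ is free of finite rank $N$ and $T := \Tor M$ is the (necessarily finite) torsion subgroup.

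Since $T$ is intrinsically characterized as the set of torsion elements, every $\varphi \in \Aut(M)$ preserves $T$, and hence induces automorphisms of $T$ and of $M/T \cong F$. This yields a natural homomorphism
\[
\Phi : \Aut(M) \longrightarrow \Aut(T) \times \Aut(F) = \Aut(T) \times GL_N(\ZZ).
\]
I would bound an arbitrary finite subgroup $G \leq \Aut(M)$ by separately bounding $|G \cap \Ker \Phi|$ and $|\Phi(G)|$. For the image: $\Aut(T)$ is a finite group (its order depends only on $T$), and any finite subgroup of $GL_N(\ZZ)$ has order bounded by a function of $N$ alone, by the classical Minkowski lemma (this is precisely the input appealed to in \cite[Lemma 2.6]{Mu2016}). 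For the kernel: after fixing the splitting $M = F \oplus T$, any $\varphi \in \Ker \Phi$ has the form $(f, t) \mapsto (f, t + h(f))$ for some $h \in \Hom(F, T) \cong T^N$, so $\Ker\Phi$ is itself a finite abelian group of order $|T|^N$.

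Combining the three bounds gives
\[
|G| \leq |\Ker \Phi| \cdot |\Aut(T)| \cdot \mu(N) \leq |T|^N \cdot |\Aut(T)| \cdot \mu(N),
\]
where $\mu(N)$ is any Minkowski-type bound for finite subgroups of $GL_N(\ZZ)$. The right-hand side depends only on the isomorphism type of $M = H^*(X_K)$, so it serves as the desired constant $C_{\Min}$. I do not expect any substantive obstacle: the argument is just the standard packaging of Minkowski's bound together with the structure theorem for finitely generated abelian groups, following the template of \cite[Lemma 2.6]{Mu2016}.
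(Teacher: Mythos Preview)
Your proof is correct and is precisely the argument the paper intends: the paper simply writes ``Arguing as in the proof of \cite[Lemma 2.6]{Mu2016}, we conclude'' without further detail, and what you have written is exactly that argument spelled out. The only input beyond Minkowski's classical bound is the finite generation of $H^*(X_K)$ over $\ZZ$ from Lemma~\ref{lemma:finite-generated-cohomology-over-integers}, which you invoke at the right place.
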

\begin{proof}
We prove, more generally, that if $M$ is a finitely generated $\ZZ$-module
then there exists a uniform upper bound on the size of the finite subgroups of $\Aut M$. Since $M$ is finitely generated, its torsion submodule $\Tor M$ is finite,
and we have an isomorphism $\phi:M/\Tor M\simeq\ZZ^r$ for some natural number $r$. Since any automorphism of $M$ preserves $\Tor M$, there is a natural morphism of groups $\Aut M\to\Aut (M/\Tor M)$ which, composed with conjugation by $\phi$, gives a morphism $\rho:\Aut M\to\GL(r,\ZZ)$.
By Minkowski's lemma \cite{Min,Serre},
there exists a uniform upper bound on the size of the finite subgroups of $\GL(r,\ZZ)$.
Consequently, it suffices to prove the existence of a uniform upper bound on the size of the finite subgroups of $\Ker\rho$. Any $\psi\in\Ker\rho$ is an automorphism $\psi:M\to M$ satisfying $\psi(p)-p\in\Tor M$ for every $p\in M$. Hence we may define an isomorphism of groups $\Ker\rho\to\Hom(M,\Tor M)$ by sending $\psi$ to the morphism $M\ni p\mapsto \psi(p)-p\in\Tor M$. But $\Hom(M,\Tor M)$
is finite (because $M$ is finitely generated), so $\Ker\rho$ is also finite.
\end{proof}

\subsection{Defining the subgroup $K_0^*$}
\label{ss:def-K-0}

We are going to use the following result, whose
proof is postponed to Section \ref{s:proof-thm:trivial-module-structure}
to avoid breaking the flow of the argument.

\begin{theorem}
\label{thm:trivial-module-structure}
Let $M$ be a finitely generated $\ZZ$-module and let $z:M\to M$ be an automorphism.
Suppose that there is a sequence of integers $r_i\to\infty$ and automorphisms $w_i:M\to M$
satisfying $w_i^{r_i}=z$. Then $z$ has finite order in $\Aut M$.
\end{theorem}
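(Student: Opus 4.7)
My plan is to reduce to the case $M = \ZZ^n$ and then combine a spectral radius estimate for $w_i$ with a matrix $\exp$/$\log$ argument on $z^N$.

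Any automorphism of $M$ preserves the torsion submodule $T \subseteq M$, so $z$ and the $w_i$ induce automorphisms $\bar z, \bar w_i$ of $M/T \simeq \ZZ^n$ with $\bar w_i^{r_i} = \bar z$. Since $\Aut T$ is finite and the kernel of $\Aut M \to \Aut T \times \Aut(M/T)$ is isomorphic to the finite group $\Hom(M/T, T)$, it suffices to show that $\bar z$ has finite order. I therefore assume $M = \ZZ^n$ and $z, w_i \in \GL_n(\ZZ)$.

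To extract a useful subsequence, note that $\det z \in \{\pm 1\}$ forces the spectral radius to satisfy $\rho(z) \geq 1$, so $\rho(w_i) = \rho(z)^{1/r_i} \leq \rho(z)$ is uniformly bounded. The coefficients of $\det(tI - w_i)$ are elementary symmetric polynomials in the eigenvalues of $w_i$, hence uniformly bounded integers, so only finitely many characteristic polynomials can occur among the $w_i$. Passing to a subsequence, I assume $\det(tI - w_i) = p(t) = \prod_j (t - \mu_j)^{m_j}$ is a fixed polynomial. For each $j$, $\mu_j^{r_i}$ is an eigenvalue of $z$ and thus takes values in a finite set; a further pigeonhole passage ensures that $\mu_j^{r_i} = \lambda_j$ is constant in $i$ for every $j$. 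For any two indices $i \neq i'$ in this subsequence, $\mu_j^{r_i - r_{i'}} = 1$ with $r_i - r_{i'}$ a nonzero integer, so each $\mu_j$ is a root of unity. Let $N$ be the least common multiple of their orders.

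Since every eigenvalue of $w_i^N$ equals $1$, $w_i^N$ is unipotent; write $w_i^N = I + N_i'$ with $N_i' \in M_n(\ZZ)$ nilpotent. Then $z^N = (w_i^N)^{r_i}$ is also unipotent, so the matrix logarithm $L := \log(z^N)$, defined by the finite sum $\sum_{k \geq 1} (-1)^{k-1} (z^N - I)^k/k$, is a well-defined nilpotent matrix. Taking logarithms of the identity $(I + N_i')^{r_i} = z^N$ yields $r_i \log(I + N_i') = L$, so
\[
N_i' \;=\; \exp(L/r_i) - I \;=\; L/r_i + L^2/(2\, r_i^2) + \cdots
\]
(a finite sum because $L$ is nilpotent), whose operator norm tends to $0$ as $r_i \to \infty$. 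Since $N_i'$ has integer entries, $N_i' = 0$ once $r_i$ is large enough, whence $w_i^N = I$ and $z^N = I$. The main obstacle I anticipate is the bookkeeping of the successive subsequence extractions, in particular matching multiplicities of eigenvalues and producing a single $N$ annihilating every $\mu_j$; once that is in place, the tension between the shrinking analytic scale $1/r_i$ and the rigidity of the integer lattice $M_n(\ZZ)$ delivers the conclusion.
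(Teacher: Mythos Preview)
Your proof is correct and follows the same overall arc as the paper's: reduce to $M=\ZZ^n$, bound the eigenvalues of the $w_i$ to force finitely many characteristic polynomials, deduce that all eigenvalues are roots of unity, and then exploit integrality to kill the remaining unipotent part. The reduction you give via the finite kernel of $\Aut M\to\Aut T\times\Aut(M/T)$ is equivalent to the paper's endgame argument with $z^e(x)=x+\psi(x)$, just packaged differently.

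The genuine divergence is in how the unipotent step is handled. The paper sets $C=A^e-I$, $D_k=B_{i_k}^e-I$, proves by descending induction that each $D_k^g$ is a rational combination of $C^g,\dots,C^{m-1}$, deduces that the kernel filtrations of $C$ and $D_k$ coincide, and then picks an integer vector $v\in\Ker C^2\setminus\Ker C$ to obtain $Cv=s_k D_kv$ with $s_k\to\infty$, forcing $Cv=0$ and a contradiction. Your argument replaces this hands-on linear algebra by the single identity $N_i'=\exp(L/r_i)-I$ coming from the $\log$/$\exp$ bijection between unipotent and nilpotent matrices; integrality of $N_i'$ then finishes immediately. This is shorter and conceptually cleaner, at the cost of invoking the (standard) fact that $\log$ is multiplicative on commuting unipotents. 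The paper's route, by contrast, stays entirely within elementary manipulations of the binomial expansion $C=\sum\binom{s_k}{j}D_k^j$ and avoids any appeal to $\exp/\log$. Your worry about matching multiplicities is unfounded: once the characteristic polynomial is fixed, each root $\mu_j$ satisfies $\mu_j^{r_i}\in\{\text{eigenvalues of }z\}$ for every $i$ in the subsequence, and pigeonholing on this finite target set is all that is needed.
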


Combining Lemmas \ref{lemma:arrels-de-t-j} and \ref{lemma:finite-generated-cohomology-over-integers}
with Theorem \ref{thm:trivial-module-structure}, we deduce the existence of a number $e$ such that,
for each $j$, multiplication by $t_j^e$ is the identity on $H^*(X_{\Psi})$.
Define
$$K_0^*=\{f:K\to\ZZ\mid f(K)\subseteq e\ZZ\}\subseteq K^*=\Hom(K,\ZZ).$$
Then $K_0^*$ has finite index in $K^*$. Since the homeomorphism
$X_K\stackrel{\cong}{\longrightarrow} X_\Psi$
defined in (\ref{eq:X-Psi-X-K}) is equivariant with respect to the
isomorphism $\theta:K^*\to\ZZ^k$ given in (\ref{eq:isomorfisme-theta}),
it follows that $K_0^*$ acts trivially on $H^*(X_K)$.

We have thus proved that $K$ satisfies property (P2) in the definition of $\kK(X)$.

\subsection{Preparing a spectral sequence argument}
\label{ss:preparing-ss}
In this subsection we are going to prove that, if $K$ and $K_0^*\leq K^*$ satisfy property
(P3) in the definition of $\kK(X)$, then they also satisfy (P4).

Let $K^*$ act on $X_K\times V_K$ by
$\lambda\cdot(x,v)=(\lambda\cdot x,v-\lambda)$ for each $\lambda\in K^*$,
and let $X_K\times_{K^*}V_K$ be the quotient space. There are natural projection maps
$$T_K=V_K/K^*\stackrel{\Pi}{\longleftarrow} X_K\times_{K^*}V_K\stackrel{\Theta}{\longrightarrow} X_K/K^*=X.$$
Both $\Pi$ and $\Theta$ are locally trivial fibrations, with fibers $X_K$ and $V_K$ respectively.
In particular, $\Theta$ is a homotopy equivalence.

\begin{lemma}
\label{lemma:homotopy-commutative-diagram}
The following diagram is homotopy commutative:
$$\xymatrix{X_K\times_{K^*}V_K\ar[rr]^-{\Theta}\ar[rd]_{\Pi} & & X\ar[ld]^{-\psi_K} \\
& T_K}$$
\end{lemma}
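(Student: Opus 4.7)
The plan is to exhibit an explicit homotopy from $-\psi_K\circ\Theta$ to $\Pi$ by linearly interpolating inside the real vector space $V_K$ and then projecting to $T_K=V_K/K^*$. Any point of $X_K\times_{K^*}V_K$ has a representative of the form $[(x,w),v]$, where $(x,w)\in X_K\subseteq X\times V_K$ satisfies $\psi_K(x)=[w]$ and $v\in V_K$. Unwinding the definitions gives $\Theta([(x,w),v])=x$ and $\Pi([(x,w),v])=[v]$, so $(-\psi_K\circ\Theta)([(x,w),v])=-[w]=[-w]$. Hence the two maps differ only by the choice of projecting either $-w$ or $v$ to $T_K$, and a straight-line path between these two elements of $V_K$ provides the desired homotopy.

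First I would define
\[ H\colon (X_K\times_{K^*}V_K)\times[0,1]\to T_K,\qquad H\bigl([(x,w),v],\,t\bigr)=\bigl[(1-t)(-w)+tv\bigr]. \]
The key step is to verify that $H$ descends to the quotient $X_K\times_{K^*}V_K$. Under the diagonal $K^*$-action, a representative $[(x,w),v]$ is replaced by $[(x,w+\lambda),v-\lambda]$ for $\lambda\in K^*$ (the $K^*$-action on $X_K\subseteq X\times V_K$ adds $\lambda$ to the second coordinate, while the outer action on $V_K$ subtracts $\lambda$, as in the setup of the lemma). Substituting into the formula yields
\[ (1-t)(-w-\lambda)+t(v-\lambda)=\bigl((1-t)(-w)+tv\bigr)-\lambda, \]
whose image in $V_K/K^*=T_K$ is unchanged. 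Continuity of $H$ is immediate from its formula.

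Finally I would check the boundary values: $H(\cdot,0)=[-w]=(-\psi_K)\circ\Theta$ and $H(\cdot,1)=[v]=\Pi$, so $H$ is a homotopy between the two compositions and the diagram commutes up to homotopy. There is no real obstacle here: the only point to watch is that the minus sign in $-\psi_K$ is forced precisely by the opposite sign conventions of the two $K^*$-actions entering the construction (the one making $X_K\to X$ a principal $K^*$-bundle versus the one making $\Pi$ and $\Theta$ well-defined on $X_K\times_{K^*}V_K$).
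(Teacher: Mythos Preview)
Your proof is correct. The explicit straight-line homotopy $H\bigl([(x,w),v],t\bigr)=[(1-t)(-w)+tv]$ is well defined on the quotient precisely because the two $K^*$-actions on the $w$- and $v$-coordinates have opposite signs, and it interpolates between $-\psi_K\circ\Theta$ and $\Pi$ as you say.

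The paper's proof follows the same underlying idea but organizes it differently: it first passes to the auxiliary space $V_K\times_{K^*}V_K$ via the $K^*$-equivariant projection $\Psi_K:X_K\to V_K$, and then observes that the inclusion of the anti-diagonal $\Delta_K=\{[u,v]:u+v=0\}$ into $V_K\times_{K^*}V_K$ is a homotopy equivalence, on which the two projections $\pi_L,\pi_R$ to $T_K$ satisfy $\pi_L=-\pi_R$. Your argument is more direct and avoids the auxiliary space; the paper's version has the minor conceptual advantage of isolating the purely ``linear'' part of the statement (about $V_K\times_{K^*}V_K$) from the part involving $X_K$. Either way, the content is a linear interpolation in $V_K$ descending to $T_K$.
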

\begin{proof}
Let $V_K\times_{K^*}V_K$ be the quotient of $V_K\times V_K$ under the action of $K^*$ given
by $\lambda\cdot(u,v)=(u+\lambda,v-\lambda)$. Denote by $[u,v]$ the class of $(u,v)\in V_K\times V_K$
in $V_K\times_{K^*}V_K$.
Let $\pi_L,\pi_R:V_K\times_{K^*}V_K\to T_K$ be the projections
$\pi_L([u,v])=[u]$ and $\pi_R([u,v])=[v]$.
Let $\Delta_K=\{[u,v]\in V_K\times_{K^*}V_K\mid u+v=0\}$.
The inclusion $\Delta_K\hookrightarrow V_K\times_{K^*}V_K$ is a homotopy equivalence,
with homotopy inverse $[u,v]\mapsto[(u-v)/2,(v-u)/2]$.
Since $\pi_L|_{\Delta_K}=-\pi_R|_{\Delta_K}$, it follows that
$\pi_L\sim -\pi_R.$

Denote by $\Psi_K:X_K\to V_K$ the natural projection,
and denote by $\Pi_L:X_K\times_{K^*}V_K\to T_K$ the map $\Pi_L([x,v])=[\Psi_K(x)]$.
The map $\Psi_K$ is $K^*$-equivariant, and
it induces a map $\xi_K:X_K\times_{K^*}V_K\to V_K\times_{K^*}V_K$ yielding
commutative diagrams
$$\xymatrix{X_K\times_{K^*}V_K\ar[r]^{\xi_K}\ar[d]_{\Pi} & V_K\times_{K^*}V_K \ar[d]^{\pi_R} \\
T_K\ar@{=}[r] & T_K,}\qquad
\xymatrix{X_K\times_{K^*}V_K\ar[r]^{\xi_K}\ar[d]_{\Pi_L} & V_K\times_{K^*}V_K \ar[d]^{\pi_L} \\
T_K\ar@{=}[r] & T_K.}$$
The diagrams, combined with $\pi_L\sim -\pi_R$, imply that $\Pi_L\sim -\Pi$. Since
$\Pi_L=\psi_K\circ\Theta$, it follows that
$\Pi\sim (-\psi_K)\circ\Theta,$
which is what we wanted to prove.
\end{proof}

Consequently, the injectivity of the map
$\psi_K^*:H^*(T_K)\to H^*(X)$
is equivalent to the injectivity of
\begin{equation}
\label{eq:Pi-cohomology}
\Pi^*:H^*(T_K)\to H^*(X_K\times_{K^*}V_K).
\end{equation}
Furthermore, $\Pi\sim (-\psi_K)\circ\Theta$ implies
that $X_K$ is the homotopy fiber of the
map $\psi_K:X\to T_K$.

Let $\pi:E\to B$ be a locally trivial fibration. We say that $\pi$ {\it has trivial monodromy}
if, for every $p\in B$, the monodromy action on the cohomology of the fibers, $\pi_1(B,p)\to\Aut H^*(\pi^{-1}(p))$, is trivial.

The monodromy
action on the cohomology of the fiber of $\Pi$ at any point $p\in T_K$
can be identified with the morphism $K^*\to\Aut H^*(X_K)$ via the
canonical isomorphism $\pi_1(T_K,p)\simeq K^*$.
Hence, the fibration
$$\Pi_0:X_K\times_{K^*_0}V_K\to V_K/K_0^*=:T_{K_0}$$
has trivial monodromy.
We have a Cartesian diagram
$$\xymatrix{X_K\times_{K_0^*}V_K \ar[r]\ar[d]_{\Pi_0} & X_K\times_{K^*}V_K \ar[d]^{\Pi} \\
T_{K_0} \ar[r]^{\pi_0} & T_K,}$$
where both horizontal maps are the quotient maps for the residual action of $K^*/K_0^*$.

The residual action of $K^*/K_0^*$ on $T_{K_0}$ is free, because it is an action through translations.
Hence, $\pi_0^*:H^1(T_{K})\to H^1(T_{K_0})$ is injective. Since
both $T_K$ and $T_{K_0}$ are tori, their cohomology can be identified with the exterior algebra
of their $H^1$, so $\pi_0^*:H^*(T_K)\to H^*(T_{K_0})$ is injective as well.
Consequently, if we prove that
\begin{equation}
\label{eq:Pi_0-cohomology}
\Pi_0^*:H^*(T_{K_0})\to H^*(X_K\times_{K_0^*}V_K)
\end{equation}
is injective,  then it will follow that the morphism (\ref{eq:Pi-cohomology}) is also injective.

Since $H^*(T_{K_0})$ is torsion free, $\Pi_0^*$ is injective if and only if the analogous map
$\Pi_0^*:H^*(T_{K_0};\QQ)\to H^*(X_K\times_{K_0^*}V_K;\QQ)$ is injective. Hence,
the injectivity of (\ref{eq:Pi_0-cohomology}) will follow from the fact that the Serre spectral
sequence for $\Pi_0$ {\it over the rationals} degenerates at the second page. The remainder of
this section is devoted to proving this degeneracy. We do it
by comparing the fibration $\Pi_0$ with a collection of fibrations defined using the actions of
the groups $G_i$ on $X$ (see the diagram (\ref{eq:diagrama-fibracions}) below).

\subsection{Group actions reenter the stage}
\label{ss:groups-reenter}
Define
$$\phi_{K,i}:=r_K\circ\phi_i:X\to T_K$$
and
$$X_{K,i}:=X_{\phi_{K,i}}.$$
Since $\phi_i\sim\psi$, we have $\phi_{K,i}\sim \psi_K$ for every $i$,
and consequently, by Lemma \ref{lemma:homotopic-bundles-isomorphic},
there is a $K^*$-equivariant homeomorphism
$$\zeta_i:X_{K,i}\to X_K.$$
In particular, $K_0^*$ acts trivially on $H^*(X_{K,i})$.
Let
$$J_i=\{(g,u)\in G_i\times V_K\mid \rho_{K,i}(g)=\pi_K(u)\}.$$
We have a commutative diagram
$$\xymatrix{G_i\ar[d]_{\rho_{K,i}} & J_i \ar[l]_{h_i} \ar[r]^{w_i} & V_K\ar[d]^{\pi_{K_0}} \\
T_K && T_{K_0} \ar[ll]_{\pi_{K,K_0}},}$$
where $h_i,w_i,\pi_{K_0},\pi_{K,K_0}$ are the obvious projection maps.
The following lemma, which will be later used, follows immediately from the definition of $J_i$.

\begin{lemma}
\label{lemma:Ker-w-i-Ker-rho}
We have $\Ker w_i=(\Ker \rho_{K,i})\times\{0\}\leq G_i\times V_K$.
\end{lemma}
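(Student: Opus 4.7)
The plan is to unpack the definitions of $J_i$ and $w_i$ directly; the lemma is essentially a bookkeeping statement and there is no real obstacle to overcome. First, observe that $J_i$ is the fiber product of the group homomorphisms $\rho_{K,i}\colon G_i\to T_K$ and $\pi_K\colon V_K\to T_K$, so it is naturally a subgroup of $G_i\times V_K$, and $w_i\colon J_i\to V_K$ is the restriction of the second projection, hence a group homomorphism. The kernel $\Ker w_i$ therefore consists of those pairs $(g,u)\in J_i$ with $u=0$.

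Next, I would verify the two inclusions. For $(g,u)\in\Ker w_i$, the condition $u=0$ combined with the defining relation $\rho_{K,i}(g)=\pi_K(u)$ of $J_i$ gives $\rho_{K,i}(g)=\pi_K(0)=0$ in $T_K$, so $g\in\Ker\rho_{K,i}$. This shows $\Ker w_i\subseteq(\Ker\rho_{K,i})\times\{0\}$. Conversely, if $g\in\Ker\rho_{K,i}$ then $(g,0)$ automatically satisfies $\rho_{K,i}(g)=0=\pi_K(0)$, so $(g,0)\in J_i$, and clearly $(g,0)\in\Ker w_i$. Thus the reverse inclusion also holds, completing the identification $\Ker w_i=(\Ker\rho_{K,i})\times\{0\}$.

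There is no substantive difficulty here; the only point worth flagging is that one should verify that $J_i$ really is a group (i.e., that the defining relation is preserved under the componentwise group law of $G_i\times V_K$), which is immediate from the fact that both $\rho_{K,i}$ and $\pi_K$ are group homomorphisms into the abelian group $T_K$.
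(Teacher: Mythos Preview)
Your proof is correct and matches the paper's approach: the paper simply states that the lemma ``follows immediately from the definition of $J_i$'' without further detail, and your argument is precisely the routine unpacking of that definition.
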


The kernel of $h_i$
can be identified with the image of the
monomorphism $f_i:K^*\to J_i$ given by $f_i(u)=(1,u)$, so, abusing notation, we denote
it by $K^*$. Similarly, we denote $f_i(K_0^*)$ by the symbol $K_0^*$.
The projection $\pi_K:V_K\to T_K$ is surjective, so $h_i:J_i\to G_i$ is also surjective.
Hence, the quotient $J_i/K^*$ is isomorphic to $G_i$ and is consequently finite.
Since $[K^*:K_0^*]$ is finite, it follows that $J_i/K_0^*$ is also finite.

The equivariance property (\ref{eq:equivariance}) allows us to define an action
of $J_i$ on $X_{K,i}$ lifting the action of $G_i$ on $X$: if $(x,v)\in X_{K,i}$
and $(g,u)\in J_i$, then we set
\begin{equation}
\label{eq:accio-J-i-X-K-i}
(g,u)\cdot (x,v):=(g\cdot x,u+v).
\end{equation}
Since $K_0^*$ acts trivially on $H^*(X_{K,i})$, the action of $J_i$ on $H^*(X_{K,i})$
factors through the inclusion of a finite group in $\Aut H^*(X_{K,i})$, so by
Lemma \ref{lemma:Minkowski-X-K} the subgroup
$$L_i:=\Ker(J_i\to\Aut H^*(X_{K,i}))$$
satisfies
\begin{equation}
\label{eq:index-J-L}
[J_i:L_i]\leq C_{\Min}.
\end{equation}
By Lemma \ref{lemma:discrete-tori-in-rho-G} we may choose a sequence of naturals
$m_1,m_2,\dots$ satisfying $m_i\to\infty$, in such a way that
$T_K[m_i]\leq \rho_{K,i}(G_i)$. By the definition of $J_i$,
we have
$\rho_{K,i}\circ h_i = \pi_K\circ w_i.$
Since $h_i:J_i\to G_i$ is surjective, it follows that
$$T_K[m_i]\leq \pi_K(w_i(J_i)).$$
Identifying $K^*$ with a
subgroup of $J_i$ as explained above, we have
$$\pi_{K_0}(w_i(K^*))=K^*/K_0^*=\Ker\pi_{K,K_0},$$ and consequently
$$T_{K_0}[em_i]=\pi_{K,K_0}^{-1}(T_K[m_i])\leq \pi_{K_0}(w_i(J_i))$$
(the number $e$ was defined in Subsection \ref{ss:def-K-0}).
The bound (\ref{eq:index-J-L}) implies that
$$[T_{K_0}[em_i]:T_{K_0}[em_i]\cap \pi_{K_0}(w_i(L_i))]\leq C_{\Min}.$$
This inequality will be combined with the following lemma.

\begin{lemma}
Let $T=\RR^k/\ZZ^k$, and let $\Lambda\leq T[m]$ for some natural number $m$.
Suppose that $[T[m]:\Lambda]\leq C$. There is a natural number $m'$
satisfying $m'\geq m/C!$ and $T[m']\leq\Lambda$.
\end{lemma}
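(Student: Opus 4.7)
The plan is to reduce the problem to a statement about the exponent of $T[m]/\Lambda$. Set $Q := T[m]/\Lambda$, a finite abelian group of order at most $C$. Since the exponent of any abelian group of order $\leq C$ divides $C!$ (its exponent divides its order, and every positive integer $\leq C$ divides $C!$), we have $C!\cdot Q = 0$, i.e.\ $C!\cdot T[m] \subseteq \Lambda$.

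Next, I would compute $C!\cdot T[m]$ explicitly as a subgroup of $T$. Identifying $T[m]$ with $\bigl(\tfrac{1}{m}\ZZ/\ZZ\bigr)^k$, multiplication by $C!$ sends this group onto the subgroup generated coordinate-wise by $\tfrac{C!}{m}\bmod 1$; and if $g := \gcd(C!, m)$, then this generator equals $\tfrac{g}{m}\bmod 1$. Hence $C!\cdot T[m] = \bigl(\tfrac{g}{m}\ZZ/\ZZ\bigr)^k = T[m/g]$.

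Combining the two observations, $T[m/g] \subseteq \Lambda$, and since $g \leq C!$ the integer $m' := m/g$ satisfies $m' \geq m/C!$ and $T[m'] \leq \Lambda$, as required.

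I do not expect any real obstacle; the only mildly subtle point is the choice of the universal constant $C!$ in place of the actual exponent of $Q$. Using $C!$ gives a bound depending only on the hypothesis $[T[m]:\Lambda]\leq C$, with no case analysis on the structure of $Q$, which is presumably why this formulation is convenient for the application in the next subsection.
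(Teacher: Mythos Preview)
Your proof is correct and follows essentially the same idea as the paper's: bound the exponent of the quotient $T[m]/\Lambda$ and deduce that a suitable multiple of $T[m]$ lies in $\Lambda$. The only cosmetic difference is that the paper works with $q=\operatorname{lcm}\{d\mid m:\ d\leq C\}$, which already divides $m$ (so $qT[m]=T[m/q]$ is immediate), whereas you multiply by $C!$ and then pass to $g=\gcd(C!,m)$ to get a divisor of $m$; both constants are $\leq C!$, so the conclusion is the same.
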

\begin{proof}
Let $q$ be the lcm of all divisors $d$ of $m$ such that $d\leq C$.
Clearly, $q\leq C!$. Let
$m'=m/q$, which is a natural number.
Let $g\in T[m]$ be any element, and let $[g]$ denote the class of $g$ in $T[m]/\Lambda$.
Then $\ord([g])$ divides $\ord(g)$, and $\ord(g)$ divides $m$, so $\ord([g])$ divides
$m$. Furthermore, $\ord([g])\leq |T[m]/\Lambda|\leq C$. Hence, $\ord([g])$ divides $q$.
It follows that $qT[m]\leq\Lambda$. But $qT[m]=T[m']$, so the lemma is proved.
\end{proof}

We may thus pick, for every $i$, a natural $n_i$ satisfying $n_i\geq em_i/C_{\Min}!$
(so $n_i\to\infty$), and such that
\begin{equation}
\label{eq:torsio-n-i}
T_{K_0}[n_i]\leq \pi_{K_0}(w_i(L_i)).
\end{equation}

\subsection{The diagram of fibrations}
\label{ss:diagram-fibrations}

Define
$$\Delta_i:=L_i\cap (\pi_{K_0}\circ w_i)^{-1}(T_{K_0}[n_i]).$$
Note that we have
$K_0^*\leq\Delta_i$. By (\ref{eq:torsio-n-i}), we have
\begin{equation}\label{eq:torsio-n-i-2}
\pi_{K_0}(w_i(\Delta_i))=T_{K_0}[n_i].
\end{equation}
Define an action of $\Delta_i$ on $X_{K,i}\times V_K$ by
$\delta\cdot(x,v)=(\delta\cdot x,v-w_i(\delta))$ for each $\delta\in\Delta_i$,
where in the first factor we use the restriction of the action of $J_i$ on $X_{K,i}$.
The quotient $X_{K,i}\times_{K_0^*}V_K:=(X_{K,i}\times V_K)/K_0^*$ carries a residual
action of $Q_i:=\Delta_i/K_0^*$.
Let
$$\Theta_i:=\Delta_i\cap\Ker w_i.$$
We have $\Theta_i\cap K_0^*=\{0\}$.
Hence, combining the inclusion $\Theta_i\hookrightarrow\Delta_i$ with the projection $\Delta_i\to\Delta_i/K_0^*$
we may identify $\Theta_i$ with a subgroup of $Q_i=\Delta_i/K_0^*$.
Consequently, $\Theta_i$ acts on $X_{K,i}\times_{K_0^*}V_K$ via an action on
$X_{K,i}\times V_K$ which is trivial on the second factor.


For any action of a group $G$ on a space $Z$, we denote as usual the corresponding
Borel construction by $Z_G=EG\times_GZ$, where $EG$ is the total space of the universal principal $G$-bundle $EG\to BG$ (hence, $EG$ is a contractible space on which $G$ acts freely),
and $EG\times_GZ$ is $(EG\times Z)/G$, the orbit space of $EG\times Z$ under the action of $G$ given by
$g\cdot(p,z)=(p\cdot g,g^{-1}\cdot z)$.
Since $EG\to BG$ is only unique up to homotopy equivalence, similarly the Borel
construction $Z_G$ is only uniquely defined up to homotopy equivalence
(or, equivalently, it is only well defined as a homotopy type).
If $G$ is a subgroup of a bigger group $H$ then, restricting to $G$ the action of $H$ on the total space of the universal fibration $EH\to BH$, we obtain a universal principal $G$-bundle
$EH\to EH/G$, so $Z_G$ is homotopy equivalent to $EH\times_GZ$. Finally, if $G$ is a normal subgroup of $H$ and the action of $G$ on $Z$ extends to an action of $H$, the space $EH\times_GZ$ inherits a residual action of $H/G$, whose orbit space is $Z_H=EH\times_HZ$.

Using the previous considerations, we can define an action of $Q_i/\Theta_i$ on a space representing the homotopy type $(X_{K,i})_{\Theta_i}\times_{K_0^*}V_K$. Indeed, the action of $\Theta_i$ on $X_{K,i}$ commutes with the action of $K_0^*$, so
\begin{multline*}
(X_{K,i})_{\Theta_i}\times_{K_0^*} V_K\simeq
((E\Delta_i \times X_{K,i})/\Theta_i\times V_K)/K_0^*
=((E\Delta_i \times X_{K,i}\times V_K)/\Theta_i)/K_0^*
=\\=((E\Delta_i \times X_{K,i}\times V_K)/K_0^*)/\Theta_i
=(E\Delta_i \times (X_{K,i}\times V_K)/K_0^*)/\Theta_i
\simeq(X_{K,i}\times_{K_0^*}V_K)_{\Theta_i}
\end{multline*}
(to understand the third and fourth terms, take into account that $\Theta_i$ acts trivially on $V_K$
and $K_0^*$ acts trivially on $E\Delta_i$).
Consequently, the residual action of $Q_i/\Theta_i\simeq (\Delta_i/K_0^*)/\Theta_i$ on $(X_{K,i}\times_{K_0^*}V_K)_{\Theta_i}$
can be transported to an action on a topological space which represents the homotopy type $(X_{K,i})_{\Theta_i}\times_{K_0^*} V_K$. When we write $(X_{K,i})_{\Theta_i}\times_{K_0^*} V_K$
in the sequel, we will mean {\it this} particular topological space, namely
$((E\Delta_i \times X_{K,i})/\Theta_i\times V_K)/K_0^*$.

The action of $\Delta_i$ on $V_K$ given by $\delta\cdot v=v-w_i(\delta)$ for each $\delta\in\Delta_i$
defines an action of $Q_i$ on $T_{K_0}=V_K/K_0^*$, with respect to which the projection map
$X_{K,i}\times_{K_0^*}V_K\to T_{K_0}$ is $Q_i$-invariant. The restriction to $\Theta_i$ of the action of $Q_i$
on $T_{K_0}$ is trivial, so the action of $Q_i$ defines an action of $Q_i/\Theta_i$
on $T_{K_0}$ with respect to which the projection map
$(X_{K,i})_{\Theta_i}\times_{K_0^*} V_K\to T_{K_0}$ is $Q_i/\Theta_i$-equivariant. Finally, by
(\ref{eq:torsio-n-i-2}), the action of $Q_i/\Theta_i$
on $T_{K_0}$ gives an isomorphism $Q_i/\Theta_i\simeq T_{K_0}[n_i]$.

The previous discussion justifies the commutativity of the leftmost square in the following commutative diagram:
\begin{equation}
\label{eq:diagrama-fibracions}
\xymatrix{
((X_{K,i})_{\Theta_i}\times_{K_0^*}V_K)/(Q_i/\Theta_i) \ar[d]_{\Pi_{3,i}} &
(X_{K,i})_{\Theta_i}\times_{K_0^*}V_K \ar[d]_{\Pi_{2,i}} \ar[l]_-{U_i} &
X_{K,i}\times_{K_0^*}V_K \ar[d]_{\Pi_{1,i}}\ar[r]^{Z_i}\ar@{_{(}->}[l]_-{W_i} &
X_K\times_{K_0^*}V_K \ar[d]_{\Pi_0} \\
T_{K_0}/T_{K_0}[n_i] &
T_{K_0}\ar@{=}[r]\ar[l]_{q_i} &
T_{K_0}\ar@{=}[r] &
T_{K_0},}
\end{equation}
where $V_i$ is the quotient map by
the free action of $Q_i/\Theta_i$, $W_i$ is induced by the inclusion
of $X_{K,i}$ inside the Borel construction $(X_{K,i})_{\Theta_i}$ as a fiber
of the projection
$$(X_{K,i})_{\Theta_i}=E\Delta_i \times_{\Theta_i}X_{K,i}\to E\Delta_i/\Theta_i,$$
$Z_i$ is induced by the equivariant homeomorphism $\zeta_i$,
map $q_i$ is the quotient map for the action of $Q_i/\Theta_i\simeq w_i(\Delta_i)/K_0^*\simeq T_{K_0}[n_i]$
on $T_{K_0}$ given by translations, and the vertical arrows are obtained by projecting
to $V_K$ and quotienting either by $w_i(\Delta_i)$ (first arrow) or by $K_0^*$ (remaining
arrows). The commutativity of the other two squares in the diagram is immediate.

All vertical arrows in the previous diagram are locally trivial fibrations with trivial monodromy.

\subsection{A sketch of the algebraic argument}
\label{ss:sketch-argument}
The previous diagram will be the key ingredient in the proof of the degeneracy of Serre's spectral
sequence. The arguments involved in this proof are
elementary but rather involved, so, for the reader's benefit,
we explain the main idea before delving into the details. Suppose that $Y\to T^k$ is a locally trivial fibration,
with fiber a space $F$ with finitely generated cohomology,
and with trivial monodromy. Suppose furthermore that there is a sequence of natural numbers
$n_i\to\infty$ and, for each $i$, a locally trivial fibration $Y_i\to T^k$ with fiber $F$ and with trivial monodromy,
and a cartesian diagram
$$\xymatrix{Y\ar[r]\ar[d] & Y_i \ar[d] \\ T^k\ar[r]^{q_i} & T^k,}$$
where $q_i$ is a covering satisfying $q_i^*H^1(T^k)=n_i H^1(T^k)$. It then follows that
$q_i^*H^j(T^k)=n_i^j H^j(T^k)$ for each $j$. This implies, by the naturality of Serre's spectral sequence, that
the differentials $d_2^{p,q}:E_2^{p,q}\to E_2^{p+2,q-1}$ in the second page are divisible by $n_i^2$. Since this is true for each $i$ and $n_i\to\infty$,
it follows that the image of $d_2^{p,q}$ is contained in the torsion of $E_2^{p,q}$,
because the assumption that $H^*(F)$ is finitely generated implies that $H^*(F)/\Tor H^*(F)$ is free. It follows that,
{\it up to torsion}, $E_3^{p,q}$ can be identified with a subgroup of $E_2^{p,q}$. We can repeat
the argument to prove that the image of $d_3^{p,q}$ is torsion, and so on. As a consequence,
and always up to torsion,
there are inclusions $E_2^{p,q}\supset E_3^{p,q}\supset E_4^{p,q}\supset\dots$,
and all differentials $d_r^{p,q}$ are torsion.
This implies the degeneracy of Serre's spectral sequence for $Y\to T^k$ over the rationals.

The previous idea can be applied to the leftmost square in (\ref{eq:diagrama-fibracions}), except that the fibration
$(X_{K,i})_{\Theta_i}\times_{K_0}^*V_K\to T_{K_0}$, which should play the role of $Y\to T^k$,
depends on $i$. But the groups $\Theta_i$ are uniformly bounded,
so that the entries in the Serre's spectral sequence of $(X_{K,i})_{\Theta_i}\times_{K_0}^*V_K\to T_{K_0}$
are equal to those of $X_{K_i}\times_{K_0}^*V_K\to T_{K_0}$ {\it up to uniformly bounded torsion}, and
the entries in the Serre's spectral sequence of $X_{K,i}\times_{K_0}^*V_K\to T_{K_0}$
are independent of $i$, thanks to the $K_0^*$-equivariant homeomorphisms
$\zeta_i:X_{K,i}\to X_K$ in Subsection \ref{ss:groups-reenter}. Note that
$H^*(X_K)$ is finitely generated, as we proved in Theorem \ref{lemma:finite-generated-cohomology-over-integers}.

In order to implement this strategy one has to keep track of each {\it up to torsion} in the previous sketch,
and that is the motivation for the contents of the following section. The proof of degeneracy is given afterwards,
in Subsection \ref{ss:conclusion-proof}.

\subsection{Ignoring the torsion in abelian groups}
\label{ss:ignoring-torsion}

\newcommand{\fexp}{\operatorname{fexp}}

We identify abelian groups with $\ZZ$-modules.
For any abelian group $M$ we denote $M_{\QQ}:=M\otimes_{\ZZ}\QQ$,
and $M_{\ZZ}$ denotes the image of the map $M\to M_{\QQ}$, $m\mapsto m\otimes 1$.
We have $M_{\ZZ}\simeq M/\Tor M$, so if $M$ is finitely generated then $M_{\ZZ}$ is free.
Any morphism of abelian groups $f:M\to N$ induces naturally morphisms
$f_{\QQ}:M_{\QQ}\to N_{\QQ}$ and $f_{\ZZ}:M_{\ZZ}\to N_{\ZZ}$, where
$f_{\ZZ}=f_{\QQ}|_{M_{\ZZ}}$.

Given a locally trivial fibration
$$\pi:E\to B$$
and a commutative ring with unit $R$ we denote by $E^{p,q}_k(\pi;R)$ the entries of
Serre's spectral sequence with coefficients in $R$ for the fibration $\pi$,
and we denote by
$$d^{p,q}_k(\pi;R):E^{p,q}_k(\pi;R)\to E^{p+k,q-k+1}_k(\pi;R)$$
the differentials.
Define $$E^{p,q}_k(\pi):=E^{p,q}_k(\pi;\ZZ).$$

\begin{lemma}
\label{lemma:tensoring-Q-spectral-sequence}
For any locally trivial fibration $\pi:E\to B$
and any $p,q,k$ there is an isomorphism
$$E^{p,q}_k(\pi;\QQ)\simeq E^{p,q}_k(\pi)_{\QQ}$$
which is natural with respect to morphisms of locally trivial fibrations.
Hence, for every $k\geq 2$ and every $p,q$,
$d^{p,q}_k(\pi;\QQ)=0$ if and only if $d^{p,q}_k(\pi)_{\QQ}=0$.
\end{lemma}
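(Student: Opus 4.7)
The plan is to derive both assertions from the fact that $\QQ$ is flat over $\ZZ$. The Serre spectral sequence of $\pi:E\to B$ with coefficients in a $\ZZ$-algebra $R$ is the spectral sequence associated to a filtered (co)chain complex whose underlying shape depends only on the topology of $\pi$: one filters the singular chain complex $S_\bullet(E)$ by $F_pS_\bullet(E):=S_\bullet(\pi^{-1}(B^p))$, where $B^p$ is the $p$-skeleton of a CW-approximation of $B$, then tensors with $R$ for the homological version, or dualizes and filters for the cohomological version.

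The key observation is that $-\otimes_{\ZZ}\QQ$ is exact, and hence commutes with every subquotient, kernel, image and homology operation used in the inductive construction of successive pages of a spectral sequence from a filtered complex. Applied page by page to the integral spectral sequence, this yields the desired natural isomorphism $E_k^{p,q}(\pi)_{\QQ}\simeq E_k^{p,q}(\pi;\QQ)$. Concretely, the homological statement is immediate, because $(S_\bullet(E),F_\bullet)$ is a filtered complex of \emph{free} abelian groups; the cohomological version then follows either by running the same argument on the dual filtered complex (using additionally that $\QQ$ is injective over $\ZZ$, being divisible, so that $\Hom(-,\QQ)$ is also exact), or by first proving the homological result and then dualizing the spectral sequence via the universal coefficient theorem over $\QQ$, for which $\Ext(-,\QQ)=0$. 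Naturality in morphisms of fibrations is automatic from the functoriality of the filtered complex construction.

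The second assertion follows at once from the first: under the natural isomorphism, $d_k^{p,q}(\pi;\QQ)$ corresponds to $d_k^{p,q}(\pi)\otimes\Id_{\QQ}=d_k^{p,q}(\pi)_{\QQ}$, so either both vanish or neither does. I expect the only real obstacle to be bookkeeping at the $E_2$-page, namely identifying $H^p(B;\mathcal{H}^q(F))_{\QQ}$ with $H^p(B;\mathcal{H}^q(F;\QQ))$; this is a standard homological-algebra point that uses both the flatness and the injectivity of $\QQ$ over $\ZZ$, and once it is in place the remaining pages follow formally by induction from the exactness of $-\otimes_{\ZZ}\QQ$.
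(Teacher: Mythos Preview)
Your proposal is correct and follows essentially the same route as the paper: reduce to the statement that, for a filtered cochain complex, the spectral sequence of $C^*\otimes\QQ$ is the spectral sequence of $C^*$ tensored with $\QQ$, and then observe that all the operations (kernels, images, intersections, quotients) defining the pages commute with $-\otimes_{\ZZ}\QQ$ by flatness. The paper's proof is slightly leaner in that it stays entirely at the filtered-complex level and does not single out the $E_2$-page identification $H^p(B;\hH^q)_{\QQ}\simeq H^p(B;\hH^q\otimes\QQ)$ as a separate issue; once you work with the filtered complex definition this identification is subsumed in the general argument and need not be checked independently.
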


In particular, for any topological space $X$ there is a natural isomorphism
of rational vector spaces $H^*(X;\QQ)\simeq H^*(X)_{\QQ}$. This of course is
an immediate consequence of the universal coefficients theorem, which is actually
the only nontrivial ingredient in the proof of the lemma.

\begin{proof}
It suffices to prove the analogous statement for filtered complexes of abelian groups.
Let $C^*$ be a cohomological complex of abelian groups endowed with a decreasing filtration $F^*C^*$,
and let $E^{p,q}_k(F^*C^*)$ denote the terms in the spectral sequence associated to $F^*C^*$.
We want to prove that $E^{p,q}_k(F^*C^*_{\QQ})$ and $E^{p,q}_k(F^*C^*)_{\QQ}$ are naturally
isomorphic for every $p,q,k$. The definition of $E^{p,q}_k(F^*C^*)$
(see e.g. \cite[\S 2.2]{McCleary}) only involves the following operations: taking kernels and images of maps
of abelian groups, taking intersections of two subgroups of a given group, and taking the quotient of a group by a subgroup. All these operations commute with tensoring by $\QQ$, so the lemma follows.
\end{proof}

We say that a morphism $f:M\to N$ of abelian groups is a {\it $\QQ$-isomorphism} if
$f_{\QQ}$ is an isomorphism. If $f$ is a $\QQ$-isomorphism then $f_{\ZZ}$ is injective,
but not necessarily surjective.

Denote by
$$\hH(\pi;R)$$
the local system over $B$ given by the cohomology with $R$-coefficients
of the fibers of the locally trivial fibration $\pi:E\to B$.
Again, we write $\hH(\pi)$ instead of $\hH(\pi;\ZZ)$.

The following is a particular case of a general result of Hilton and Roitberg \cite{HiltonRoitberg}.
We include the proof for completeness.

\begin{lemma}
\label{lemma:isom-Serre-SS-rational}
Consider a commutative diagram of topological spaces
$$\xymatrix{E\ar[d]_{\pi}\ar[r]^{\phi} & E'\ar[d]^{\pi'} \\
B\ar[r]^{\psi} & B',}$$
where both $\pi$ and $\pi'$ are locally trivial fibrations and $B$ and $B'$ are connected.
Fix some $x\in B$ and denote $F=\pi^{-1}(x)$ and $F'=(\pi')^{-1}(\psi(x))$.
Suppose that:
\begin{enumerate}
\item the morphism $(\phi|_F)^*:H^*(F') \to H^*(F)$ is a $\QQ$-isomorphism,
\item both local systems $\hH(\pi)$ and $\hH(\pi')$ are trivial,
\item the morphism $\psi^*:H^*(B')\to H^*(B)$ is a $\QQ$-isomorphism.
\end{enumerate}
Then for each $p,q$ and each $k\geq 2$ the natural map
$E^{p,q}_k(\pi')\to E^{p,q}_k(\pi)$ is a $\QQ$-isomorphism.
\end{lemma}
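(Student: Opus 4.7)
The plan is to reduce to rational coefficients, identify the $E_2$ page as a tensor product, and invoke the comparison theorem for morphisms of spectral sequences.

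First, by Lemma \ref{lemma:tensoring-Q-spectral-sequence} the natural map $E^{p,q}_k(\pi')\to E^{p,q}_k(\pi)$ is a $\QQ$-isomorphism if and only if the corresponding map of rational spectral sequences $E^{p,q}_k(\pi';\QQ)\to E^{p,q}_k(\pi;\QQ)$ is an isomorphism, so it suffices to prove the latter. Since $\QQ$ is a field and the local systems $\hH(\pi;\QQ)$, $\hH(\pi';\QQ)$ are trivial by hypothesis (2), the universal coefficient theorem yields natural identifications
$$E_2^{p,q}(\pi;\QQ)\simeq H^p(B;\QQ)\otimes_\QQ H^q(F;\QQ),\qquad E_2^{p,q}(\pi';\QQ)\simeq H^p(B';\QQ)\otimes_\QQ H^q(F';\QQ).$$
Under these identifications, the map induced on $E_2$ pages by the commutative square is the tensor product $\psi^*_\QQ\otimes (\phi|_F)^*_\QQ$, which is an isomorphism by assumptions (1) and (3).

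I would then invoke the standard comparison theorem for morphisms of spectral sequences: such a morphism that is an isomorphism on $E_r$ is automatically an isomorphism on $E_s$ for every $s\geq r$, since the morphism commutes with the differentials $d_s$ and $E_{s+1}$ is by definition the cohomology of $(E_s,d_s)$. Applied with $r=2$, this gives the conclusion for every $k\geq 2$.

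The argument is essentially formal once the pieces are lined up; the only point deserving a moment of care is that the map on $E_2$ pages really does agree with $\psi^*_\QQ\otimes (\phi|_F)^*_\QQ$ under the tensor product identifications, which follows from the naturality in both variables of the universal coefficient isomorphism $H^*(B;V)\simeq H^*(B;\QQ)\otimes_\QQ V$ for $V$ a $\QQ$-vector space.
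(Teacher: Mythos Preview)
Your proof is correct and follows essentially the same approach as the paper: reduce to rational coefficients via Lemma~\ref{lemma:tensoring-Q-spectral-sequence}, identify the $E_2$ page with the tensor product using triviality of the local systems, conclude that the map on $E_2$ is an isomorphism by hypotheses (1) and (3), and then propagate to all higher pages by the fact that a morphism of spectral sequences which is an isomorphism on one page is an isomorphism on all later pages. The paper phrases this last step as ``induction on $k$'' using the naturality of the Serre spectral sequence, which is exactly the comparison theorem you invoke.
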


\begin{proof}
By Lemma \ref{lemma:tensoring-Q-spectral-sequence}, it suffices to prove that
the natural map
$E^{p,q}_k(\pi';\QQ)\to E^{p,q}_k(\pi;\QQ)$ is a $\QQ$-isomorphism assuming the
following properties hold true:
(1) the morphism $(\phi|_F)^*:H^*(F';\QQ) \to H^*(F;\QQ)$ is an isomorphism,
(2) both local systems $\hH(\pi)$ and $\hH(\pi')$ are trivial,
(3) the morphism $\psi^*:H^*(B';\QQ)\to H^*(B;\QQ)$ is an isomorphism.
This is what we are going to prove.

We use ascending induction on $k$. For the case $k=2$ we have an isomorphism
$$E_2^{p,q}(\pi;\QQ)=H^p(B;\hH^q(\pi;\QQ))\simeq H^p(B;\QQ)\otimes H^q(F;\QQ),$$
by the triviality of $\hH(\pi)$ and the universal coefficient theorem.
There is an analogous isomorphism for $\pi'$, and both isomorphisms give
the horizontal maps in the following commutative diagram
$$\xymatrix{E_2^{p,q}(\pi';\QQ)\ar[r]\ar[d] & H^p(B';\QQ)\otimes H^q(F';\QQ) \ar[d]^{\psi^*\otimes(\phi|_F)^*} \\
E_2^{p,q}(\pi;\QQ)\ar[r] & H^p(B;\QQ)\otimes H^q(F;\QQ).}$$
The right vertical arrow is an isomorphism by hypothesis (1) and (3), so the left
vertical arrow is also an isomorphism. This proves the case $k=2$.
The induction step 
follows from the naturality of Serre's spectral sequence.
\end{proof}

\begin{lemma}
\label{lemma:equiv-cohom-Q-trivial}
Let $Y$ be a topological space endowed with an action of a finite group $G$.
Suppose that the induced action of $G$ on $H^*(Y;\QQ)$ is trivial.
Let $Y_G\to BG$ be the Borel construction of the action, and let $\iota:Y\to Y_G$
be the inclusion of one fiber. Then $\iota:H^k(Y_G)\to H^k(Y)$ is a $\QQ$-isomorphism
for every $k$.
\end{lemma}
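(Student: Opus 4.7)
The plan is to analyze the Serre spectral sequence of the Borel fibration $Y_G\to BG$ with rational coefficients. The $E_2$ page is $E_2^{p,q}(\pi';\QQ)=H^p(BG;\hH^q(Y;\QQ))$, where $\pi':Y_G\to BG$ denotes the Borel fibration. The hypothesis that $G$ acts trivially on $H^*(Y;\QQ)$ says exactly that the local system $\hH(\pi';\QQ)$ is trivial, so by the universal coefficient theorem
\[ E_2^{p,q}(\pi';\QQ)\simeq H^p(BG;\QQ)\otimes_\QQ H^q(Y;\QQ). \]

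The key input is that, since $G$ is finite, $H^p(BG;\QQ)=0$ for $p>0$ and $H^0(BG;\QQ)=\QQ$ (this is the standard transfer argument, or equivalently the fact used in the definition of the rotation morphism). Consequently $E_2^{p,q}(\pi';\QQ)=0$ whenever $p>0$, and $E_2^{0,q}(\pi';\QQ)\simeq H^q(Y;\QQ)$. All differentials in the second and higher pages therefore vanish for trivial reasons (their source or target is zero), so the spectral sequence degenerates at $E_2$, and the edge homomorphism
\[ H^k(Y_G;\QQ)\longrightarrow E_\infty^{0,k}(\pi';\QQ)=E_2^{0,k}(\pi';\QQ)\simeq H^k(Y;\QQ) \]
is an isomorphism. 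By the naturality of the spectral sequence, this edge homomorphism coincides with $\iota^*:H^k(Y_G;\QQ)\to H^k(Y;\QQ)$.

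Finally, to translate this into the claim that $\iota^*:H^k(Y_G)\to H^k(Y)$ is a $\QQ$-isomorphism, we apply Lemma \ref{lemma:tensoring-Q-spectral-sequence} (or the universal coefficient theorem directly), which gives natural isomorphisms $H^k(Y_G;\QQ)\simeq H^k(Y_G)_\QQ$ and $H^k(Y;\QQ)\simeq H^k(Y)_\QQ$ intertwining $\iota^*$ with $(\iota^*)_\QQ$. Hence $(\iota^*)_\QQ$ is an isomorphism, which is exactly the definition of $\iota^*$ being a $\QQ$-isomorphism. There is no real obstacle here: the only subtlety is ensuring that the edge homomorphism in the Serre spectral sequence really is induced by inclusion of the fiber, which is standard.
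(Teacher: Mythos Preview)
Your proof is correct and follows essentially the same approach as the paper: both use the Serre spectral sequence of the Borel fibration together with the vanishing of $H^{>0}(BG;\QQ)$ for finite $G$. The paper packages this by applying Lemma~\ref{lemma:isom-Serre-SS-rational} to the diagram $Y\to Y_G$ over $\{*\}\to BG$, whereas you compute the rational spectral sequence of $Y_G\to BG$ directly; the content is the same.
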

\begin{proof} Apply the previous lemma to the following diagram:
$$\xymatrix{Y\ar[r]\ar[d] & Y_G\ar[d] \\
\{*\}\ar[r] & BG.}$$
\end{proof}

\subsubsection{Exponent of finite abelian groups}
Let $A$ be a finite abelian group. The {\it exponent} of $A$, which we denote by $\exp A$,
is the lcm of the orders of the elements of $A$. Since $A$ is abelian, $\exp A$ coincides
with $\max\{\ord a\mid a\in A\}$, where $\ord a$ denotes the order of $a$.
If $A$ is a finite abelian group and $B$ is a subquotient of $A$, then clearly $\exp B\leq\exp A$.

If $f:M\to N$ is a morphism of finitely generated abelian groups and $N/f(M)$ is finite, then
we define the {\it exponent} of $f$ as:
$$\exp f:=\exp(N/f(M)).$$
If $N/f(M)$ is infinite, then we set $\exp f:=\infty$.

\begin{lemma}
\label{lemma:basic-properties-exp}
\begin{enumerate}
\item Let $0\to M'\to M\stackrel{\pi}{\longrightarrow} M''\to 0$
be an exact sequence of finite groups. We have
$\exp M\leq(\exp M')(\exp M'').$
\item Let $A\leq B\leq C$ be finitely generated abelian groups.
Suppose that both $\exp B/A$ and $\exp C/B$ are finite. Then
$\exp C/A\leq (\exp B/A)(\exp C/B).$
\end{enumerate}
\end{lemma}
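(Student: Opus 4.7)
The plan is to prove (1) directly from the definition of exponent, and then deduce (2) by applying (1) to a suitably chosen short exact sequence of finite groups.

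For part (1), I would fix an arbitrary element $m\in M$ and show that its order divides $(\exp M')(\exp M'')$. Set $e'=\exp M'$ and $e''=\exp M''$. Since $\pi(m)\in M''$, the definition of $\exp M''$ gives $e''\cdot\pi(m)=0$, hence $\pi(e''m)=0$, which by exactness means $e''m\in M'$. Then $e'\cdot(e''m)=0$, so the order of $m$ divides $e'e''$. Taking the maximum over $m\in M$, we conclude $\exp M\leq e'e''$, which is what we want. This is the straightforward step and essentially the whole content of the lemma.

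For part (2), I would first observe that the hypothesis that $\exp B/A$ and $\exp C/B$ are finite forces both $B/A$ and $C/B$ to be finite: for a finitely generated abelian group the exponent is finite if and only if the group itself is finite (otherwise a free summand $\ZZ$ would make the exponent infinite). Next I would invoke the standard short exact sequence of abelian groups
\[
0\to B/A\to C/A\to C/B\to 0,
\]
whose exactness is elementary (the map $B/A\to C/A$ is induced by inclusion and is injective because $A\subseteq B$; the map $C/A\to C/B$ is the quotient map, which is surjective with kernel $B/A$). Since the outer terms are finite, so is $C/A$, and part (1) applied to this exact sequence gives $\exp(C/A)\leq \exp(B/A)\cdot\exp(C/B)$, which is the desired inequality.

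The proof is essentially routine, with no real obstacle to overcome; both statements are standard divisibility estimates for abelian groups. The only thing requiring a small remark is the preliminary observation in (2) that the finiteness of the exponents forces the finiteness of the groups, so that part (1) becomes applicable.
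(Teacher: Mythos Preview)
Your proof is correct and follows exactly the same approach as the paper: for (1) you push an element into $M'$ using $\exp M''$ and then kill it with $\exp M'$, and for (2) you apply (1) to the exact sequence $0\to B/A\to C/A\to C/B\to 0$. Your added remark that finite exponent together with finite generation forces finiteness is a detail the paper leaves implicit but is indeed needed for (1) to apply.
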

\begin{proof}
We first prove (1). Let $m\in M$. Then $\pi((\exp M'')m)=(\exp M'')\pi(m)=0$, so
$(\exp M'')m\in M'$. Hence, $(\exp M')(\exp M'')m=0$,
which implies $\exp M\leq(\exp M')(\exp M'')$.
Applying (1) to the exact sequence
$0\to B/A\to C/A\to C/B\to 0$ we prove (2).
\end{proof}

\begin{lemma}
\label{lemma:exp-f-vs-exp-f-Z}
Let $f:M\to N$ be a morphism of finitely generated abelian groups. Suppose that $\exp f$ is finite.
Then $\exp(f_{\ZZ})\leq \exp f$.
\end{lemma}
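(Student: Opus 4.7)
The plan is to prove this directly by lifting elements. Let $e := \exp f$, so by assumption $e N \subseteq f(M)$. I want to show that $e N_{\ZZ} \subseteq f_{\ZZ}(M_{\ZZ})$, which will give $\exp(f_{\ZZ}) \leq e$.

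First I would recall the setup: $M_{\ZZ}$ and $N_{\ZZ}$ are obtained from $M$ and $N$ by quotienting out the torsion, and the canonical projections $p_M : M \to M_{\ZZ}$ and $p_N : N \to N_{\ZZ}$ fit into a commutative square with $f$ and $f_{\ZZ}$. Now take any element $n' \in N_{\ZZ}$. Since $p_N$ is surjective, choose $n \in N$ with $p_N(n) = n'$. By hypothesis, $en \in f(M)$, so write $en = f(m)$ for some $m \in M$. Applying $p_N$ and using the commutativity of the square, I get
\[
e n' = e \, p_N(n) = p_N(en) = p_N(f(m)) = f_{\ZZ}(p_M(m)) \in f_{\ZZ}(M_{\ZZ}).
\]
So every element of $N_{\ZZ}/f_{\ZZ}(M_{\ZZ})$ is annihilated by $e$.

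It remains to convert this annihilation statement into the bound $\exp(f_{\ZZ}) \leq e$, which requires knowing that $N_{\ZZ}/f_{\ZZ}(M_{\ZZ})$ is finite (so that its exponent is defined as in the lemma's conventions). Since $N$ is finitely generated, so is $N_{\ZZ}$, and hence so is the quotient $N_{\ZZ}/f_{\ZZ}(M_{\ZZ})$; a finitely generated abelian group of bounded exponent is finite. Therefore $N_{\ZZ}/f_{\ZZ}(M_{\ZZ})$ is finite and $\exp(f_{\ZZ}) \leq e = \exp f$.

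I do not expect any real obstacle: the proof is a one-line diagram chase once one unwinds the definitions. The only mild subtlety worth flagging explicitly is the finiteness check at the end, which is needed because the paper's definition of $\exp$ for a morphism goes through the exponent of a finite cokernel; this is handled via the finite generation of $N$ and the elementary fact about finitely generated abelian groups of bounded exponent.
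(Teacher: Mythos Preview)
Your proof is correct and follows essentially the same approach as the paper: both arguments establish that $N_{\ZZ}/f_{\ZZ}(M_{\ZZ})$ is annihilated by $\exp f$ via the commutative square linking $f$ and $f_{\ZZ}$ through the torsion-quotient projections. The only cosmetic difference is that the paper packages the diagram chase as an application of the snake lemma (yielding a surjection $N/f(M)\twoheadrightarrow N_{\ZZ}/f_{\ZZ}(M_{\ZZ})$), whereas you carry out the lift-and-push computation by hand.
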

\begin{proof}
Applying the snake lemma to the following commutative diagram with exact rows,
it follows that $N/f(M)$ surjects onto $N_{\ZZ}/f_{\ZZ}(M_{\ZZ})$, from which
the lemma follows immediately:
$$\xymatrix{0\ar[r] & \Tor M\ar[r]\ar[d]_{f|_{\Tor M}} & M\ar[r]\ar[d]_f & M_{\ZZ} \ar[r]\ar[d]_{f_{\ZZ}} & 0 \\
0\ar[r] & \Tor N\ar[r] & N\ar[r] & N_{\ZZ} \ar[r] & 0.}$$
\end{proof}

\begin{lemma}
\label{lemma:exponent-squares}
Consider the following commutative diagram of finitely generated free abelian groups:
$$\xymatrix{A \ar@{^{(}->}[r]^f \ar[d]_g & B \ar[d]^h \\
A' \ar@{^{(}->}[r]^{f'} & B'.}$$
Assume that the inclusions $f,f'$ have finite exponent.
\begin{enumerate}
\item If $h(B)=\lambda B'$ for some nonzero integer $\lambda$, then
\begin{equation}
\label{eq:acotacio-g(A)}
(\exp f)\lambda A'\leq g(A)\leq \frac{\lambda}{\GCD(\lambda,\exp f')}A'.
\end{equation}
\item If $g(A)\leq\mu A'$ for some nonzero integer $\mu$, then
\begin{equation}
\label{eq:acotacio-h(B)}
h(B)\leq\frac{\mu}{\GCD(\mu,\exp f)}B'.
\end{equation}
\item If $h$ has finite exponent then so does $g$, and we have $\exp g\leq (\exp f)(\exp h)$.
\end{enumerate}
\end{lemma}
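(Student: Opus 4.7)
The plan is to regard $f$ and $f'$ as honest inclusions, so that $A\leq B$ and $A'\leq B'$ with $h(A)\leq A'$ and $g=h|_A$. Each of the three statements then reduces to a short element chase exploiting two facts: $\exp f$ annihilates $B/A$, and $\exp f'$ annihilates $B'/A'$. The general strategy is to move elements between the four groups by multiplying by $\lambda$, $\mu$, $\exp f$, $\exp f'$, or $\exp h$ as appropriate, and then to invoke Bezout's identity together with the freeness of $A'$ and $B'$ to sharpen the resulting inclusions.

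For (1), the lower bound $(\exp f)\lambda A'\leq g(A)$ is obtained by lifting any $a'\in A'$ via $h(B)=\lambda B'$ to some $b\in B$ with $h(b)=\lambda a'$, and noting that $(\exp f)b\in A$ gives $g((\exp f)b)=(\exp f)\lambda a'$. For the upper bound, write $g(a)=\lambda b'$ with $b'\in B'$; the subgroup $\{k\in\ZZ\mid kb'\in A'\}$ contains both $\lambda$ (because $g(a)\in A'$) and $\exp f'$, hence contains $d:=\GCD(\lambda,\exp f')$, so $db'\in A'$ and $g(a)=(\lambda/d)(db')\in (\lambda/d)A'$. Statement (2) follows similarly: for $b\in B$, the identity $(\exp f)h(b)=g((\exp f)b)\in\mu B'$ combined with the freeness of $B'$ lets us cancel the common factor $d:=\GCD(\mu,\exp f)$ and obtain $e_0 h(b)\in \mu_0 B'$, where $\exp f=de_0$ and $\mu=d\mu_0$ are coprime; Bezout then gives $h(b)\in\mu_0 B'=(\mu/d)B'$. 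Statement (3) is the cleanest: given $a'\in A'$, the relation $(\exp h)a'\in h(B)$ provides $b\in B$ with $h(b)=(\exp h)a'$, and then $g((\exp f)b)=(\exp f)h(b)=(\exp f)(\exp h)a'$ lies in $g(A)$.

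There is no serious obstacle here; the arguments are entirely elementary, and in particular no appeal is needed to structural theorems for finitely generated abelian groups beyond the obvious fact that finitely generated free modules are torsion-free. The mildly subtle points are the two Bezout steps, in (1)'s upper bound and in (2), where the freeness of $A'$ and $B'$ is essential in order to cancel the common factor $d$ from a relation like $de_0 h(b)\in d\mu_0 B'$ before invoking Bezout to conclude.
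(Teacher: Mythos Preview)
Your proof is correct. For parts (1) and (2) you do exactly what the paper does: the paper first isolates the Bezout step as a standalone claim (if $eC'\leq\lambda C$ for subgroups $C'\leq C$, then $\GCD(\lambda,e)C'\leq\lambda C$), and then carries out the same element chases you describe. Your phrasing of (1)'s upper bound via the ideal $\{k\in\ZZ\mid kb'\in A'\}$ is a tidy repackaging of the same computation.

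For (3) your argument is genuinely simpler than the paper's. The paper argues indirectly: it embeds $A'/g(A)$ into $B'/h(f(A))$ via $f'$, then uses the short exact sequence
\[
0\to h(B)/h(f(A))\to B'/h(f(A))\to B'/h(B)\to 0
\]
together with the multiplicativity of exponents in extensions (Lemma~\ref{lemma:basic-properties-exp}) to bound $\exp(B'/h(f(A)))\leq(\exp f)(\exp h)$. Your approach bypasses this machinery entirely by directly exhibiting, for each $a'\in A'$, an element of $A$ mapping to $(\exp f)(\exp h)a'$; this immediately shows $A'/g(A)$ is a quotient of the finite group $A'/(\exp f)(\exp h)A'$ and hence has exponent at most $(\exp f)(\exp h)$. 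Both proofs are elementary, but yours avoids invoking the earlier lemma on exponents of extensions.
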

\begin{proof}
We will use the following claim: if $G'\leq G$ is an inclusion of abelian groups and, for some integers $e,\lambda$, we have $eG'\leq\lambda G$,
then $\GCD(\lambda,e)G'\leq \lambda G$. This follows from writing $\GCD(\lambda,e)$ as $a\lambda+be$,
where $a,b$ are integers.

We prove (1). Let $a'\in A'$. Then $f'(\lambda a')=h(b)$ for some $b\in B$. There exists some $a\in A$ such that $(\exp f)b=f(a)$.
Hence,
$$f'(g(a))=h(f(a))=h((\exp f)b)=(\exp f)h(b)=(\exp f)f'(\lambda a')=f'((\exp f)\lambda a').$$
Since $f'$ is injective, it follows that $g(a)=(\exp f)\lambda a'$. This proves the first inclusion in (\ref{eq:acotacio-g(A)}).
For the other inclusion, note that if $a\in A$ then there exists some $b'\in B'$ such that $f'(g(a))=h(f(a))=\lambda b'$.
This implies that $f'((\exp f')g(a))=\lambda (\exp f')b'$, so there exists some $a'\in A'$ satisfying
$f'((\exp f')g(a))=\lambda f'(a')$. Since $f'$ is injective, we deduce $(\exp f')g(a)=\lambda a'$.
We have thus proved that $(\exp f')g(A)\leq\lambda A'$. By the claim, $\GCD(\lambda,\exp f')g(A)\leq\lambda A'$.
Since this is an inclusion of subgroups of $A'$, and $A'$ is free, we get the second inclusion in (\ref{eq:acotacio-g(A)}).

We now prove (2). We have $(\exp f)h(B)=h((\exp f)B)\leq h(f(A))=f'(g(A))\leq\mu B'$.
By the claim, $\GCD(\mu,\exp f)h(B)\leq\mu B'$. Since $B'$ is free, we get (\ref{eq:acotacio-h(B)}).

We finally prove (3). Since $f'$ is injective, $A'/g(A)$ can be identified with a subgroup of $B'/f'(g(A))=B'/h(f(A))$,
and hence $\exp g=\exp (A'/g(A))\leq \exp (B'/h(f(A)))$.
We have an exact sequence of finite abelian groups
$$0\to\frac{h(B)}{h(f(A))}\to \frac{B'}{h(f(A))}\to \frac{B'}{h(B)}\to 0,$$
so by (1) in Lemma \ref{lemma:basic-properties-exp} we have
$$\exp \frac{B'}{h(f(A))}\leq \exp \frac{h(B)}{h(f(A))} \exp \frac{B'}{h(B)} \leq
\exp \frac{B}{f(A)}\exp \frac{B'}{h(B)} =(\exp f)\cdot (\exp h ).$$
\end{proof}

\subsubsection{Bounds on equivariant cohomology}

\begin{lemma}
\label{lemma:equiv-cohom-exp}
Let $Y$ be a topological space endowed with an action of a finite group $G$. Suppose that $H^*(Y)$ is
finitely generated as an abelian group, and that the induced action of $G$ on $H^*(Y)$ is trivial.
We have:
\begin{equation}
\label{eq:H-G-dins-de-H}
\exp(\iota^*:H^k(Y_G)\to H^k(Y))\leq |G|^k.
\end{equation}
Furthermore, denoting
$C_Y=\max_k\{\exp\Tor H^k(Y)\}$, we have:
\begin{equation}
\label{eq:exp-tor-H-G}
\exp\Tor H^k(Y_G)\leq C_Y|G|^k.
\end{equation}
\end{lemma}
\begin{proof}
Let $\pi:Y_G\to BG$ be the Borel construction for the action of $G$ on $Y$,
and denote $E_r^{p,q}:=E_r^{p,q}(\pi)$. There is a natural
isomorphism $E^{0,k}_2\simeq H^k(Y)$ and a chain of inclusions
$$E^{0,k}_2\supseteq E^{0,k}_3\supseteq\dots\supseteq
E^{0,k}_{k+2}=E^{0,k}_{k+3}=\dots=E^{0,k}_{\infty},$$
and we can identify $\iota^*H^k(Y_G)$ with $E^{0,k}_{\infty}=E^{0,k}_{k+2}$.
For every $p,q$ we have
\begin{equation}
\label{eq:terme-p-q-2}
E^{p,q}_2\simeq H^p(BG,H^q(Y))\simeq H^p(G,H^q(Y)),
\end{equation}
so $\exp E^{p,q}_2\leq |G|$ for every $p>0$ (see e.g. \cite[Chap. III, Corollary 10.2]{Brown}).
If $k\geq 2$ and $p>0$ then $\exp E^{p,q}_k$
is a subquotient of $\exp E^{p,q}_2$, so again $\exp E^{p,q}_k\leq |G|$. Since
$E_r^{0,k}=\Ker(d_{r-1}:E_{r-1}^{0,k}\to E_{r-1}^{r-1,k-r+2})$, we have
$\exp (E_{r-1}^{0,k}/E_r^{0,k})\leq |G|$ for $r>2$. Hence, applying repeatedly (2) in
Lemma \ref{lemma:basic-properties-exp}, we get:
$$\exp(\iota^*:H^k(Y_G)\to H^k(Y))\leq \prod_{r=2}^{k+1} \exp(E_{r}^{0,k}/E_{r+1}^{0,k})\leq|G|^k.$$
This proves (\ref{eq:H-G-dins-de-H}).

We now prove (\ref{eq:exp-tor-H-G}). Since $E^{0,q}_2\simeq H^q(Y)$,
we have $\exp\Tor E^{0,q}_2\leq C_Y$.
If $p>0$ then, by (\ref{eq:terme-p-q-2}), we have
$\exp\Tor E^{p,q}_2=\exp E^{p,q}_2\leq|G|$. Since $E^{p,q}_{\infty}$ is a subquotient of
$E^{p,q}_2$, the same bounds apply to $\exp\Tor E^{p,q}_{\infty}$. There is a filtration
$$H^k(Y_G)=F^0H^k(Y_G)\supseteq F^1H^k(Y_G)
\supseteq\dots\supseteq F^kH^k(Y_G)\supset F^{k+1}H^k(Y_G)=0$$ satisfying $F^jH^k(Y_G)/F^{j+1}H^k(Y_G)\simeq E^{j,k-j}_{\infty}$, so applying
repeatedly (2) in Lemma \ref{lemma:basic-properties-exp}, we get $\exp\Tor H^k(Y_G)\leq C_Y|G|^k$.
\end{proof}

\subsubsection{Complexes with $d_{\QQ}=0$}


The following property will be used in the next two lemmas:
if $f:M\to N$ satisfies $f_{\QQ}=0$, then $f(M)\leq \Tor N$.


\begin{lemma}
\label{lemma:torsion-does-not-increase}
Let $(C^*,d^*)$ be a complex of finitely generated abelian groups.
Suppose that $(d^k)_\QQ=0$ for every $k$. Then, for every $k$, $\Tor H^k(C^*)$ is isomorphic to a subquotient
of $\Tor C^k$, and hence $\exp(\Tor H^k(C^*))\leq \exp(\Tor C^k)$.
\end{lemma}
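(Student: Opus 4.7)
The plan is to split the complex $C^*$ using its torsion subgroups and then use a long exact sequence in cohomology to isolate the torsion in $H^k(C^*)$.

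First I would exploit the hypothesis $(d^k)_\QQ=0$ in the following way. Since $d^k_\QQ=0$, the image $d^k(C^k)$ lies in $\Tor C^{k+1}$; in particular $d^k$ restricts to a morphism $\Tor C^k\to\Tor C^{k+1}$, so $(\Tor C^*, d^*|_{\Tor})$ is a subcomplex of $(C^*,d^*)$. Taking the quotient, the induced differential on $C^*_\ZZ$ is forced to be zero: it equals $(d^k)_\ZZ$, and since $d^k(C^k)\subseteq\Tor C^{k+1}$ while $C^{k+1}_\ZZ$ is torsion-free, this composition vanishes. Thus we have a short exact sequence of complexes
\begin{equation*}
0\to \Tor C^*\to C^*\to (C^*_\ZZ,0)\to 0,
\end{equation*}
and the cohomology of the rightmost complex is just $C^k_\ZZ$ in degree $k$, which is torsion-free.

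Next I would write down the associated long exact sequence in cohomology,
\begin{equation*}
\cdots\to H^k(\Tor C^*)\stackrel{\alpha}{\to} H^k(C^*)\stackrel{\beta}{\to} C^k_\ZZ\to H^{k+1}(\Tor C^*)\to\cdots
\end{equation*}
If $x\in \Tor H^k(C^*)$, then $\beta(x)\in \Tor C^k_\ZZ=0$, so $x\in\Im\alpha$. Hence $\Tor H^k(C^*)$ is a quotient of a subgroup of $H^k(\Tor C^*)$. But $H^k(\Tor C^*)=\Ker(d^k|_{\Tor C^k})/\Im(d^{k-1}|_{\Tor C^{k-1}})$ is itself a subquotient of $\Tor C^k$. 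Composing these two subquotient relations, $\Tor H^k(C^*)$ is a subquotient of $\Tor C^k$, and the exponent inequality follows from the elementary fact that the exponent of a finite abelian group does not increase under passing to subgroups or quotients.

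I do not expect any serious obstacle; the main point is simply recognizing that the hypothesis $(d^k)_\QQ=0$ is equivalent, for finitely generated groups, to $d^k$ landing in torsion, which then makes the torsion subcomplex and the torsion-free quotient complex split off cleanly. Once that observation is in place, the long exact sequence does the rest with no bookkeeping.
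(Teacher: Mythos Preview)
Your proof is correct and follows essentially the same approach as the paper. The paper argues by direct element chasing: if $[a]\in\Tor H^k(C^*)$ then $na=d^{k-1}b\in\Tor C^k$ for some $n$, hence $a\in\Tor C^k$, so $\Ker(d^k|_{\Tor C^k})$ surjects onto $\Tor H^k(C^*)$; you package the identical observation via the short exact sequence $0\to\Tor C^*\to C^*\to (C^*_{\ZZ},0)\to 0$ and its long exact sequence (which is exactly the sequence the paper uses in the very next lemma).
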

\begin{proof}
Suppose that $a\in C^k$ satisfies $d^ka=0$ and that $[a]\in \Tor H^k(C^*)$, so, for some $n\in\NN$ and $b\in C^{k-1}$, $na=d^{k-1}b$. Since $(d^{k-1})_{\QQ}=0$, we have $d^{k-1}C^{k-1}\leq\Tor C^k$. Hence $na\in \Tor C^k$, so $a\in\Tor C^k$. It follows that $\Ker (d^k:\Tor C^k\to C^{k+1})$ surjects onto $\Tor H^k(C^*)$.
\end{proof}

\begin{lemma}
\label{lemma:cohomology-d-QQ-zero}
For every complex of finitely generated abelian groups $(C^*,d^*_C)$ satisfying
$(d^*_C)_{\QQ}=0$, and for every $k$, there is a naturally defined inclusion
$\iota_C^k:H^k(C^*)_{\ZZ}\hookrightarrow C^k_{\ZZ}$ satisfying
$\exp \iota_C^k\leq\exp\Tor C^{k+1}$.
If $(C^*,d^*_C)$ and $(D^*,d^*_D)$ are complexes of finitely generated abelian groups satisfying
$(d^*_C)_{\QQ}=0=(d^*_D)_{\QQ}$ and $f^*:C^*\to D^*$ is a morphism of complexes, then for
every $k$ there is a commutative diagram:
$$\xymatrix{H^k(C^*)_{\ZZ}\ar[r]^-{\iota_C^k}\ar[d]_{H^k(f^*)_{\ZZ}} & C^k_{\ZZ}\ar[d]^{f^k} \\
H^k(D^*)_{\ZZ}\ar[r]^-{\iota_D^k}& D^k_{\ZZ}.}$$
\end{lemma}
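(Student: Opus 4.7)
The plan is to define $\iota_C^k$ on cocycle representatives. For $a\in Z^k(C^*):=\Ker d_C^k$, representing a class $[a]\in H^k(C^*)$, I would send $[a]$ to the image $\ov a\in C^k_{\ZZ}$ of $a$ under the projection $C^k\to C^k_{\ZZ}=C^k/\Tor C^k$. The first thing to check is well-definedness on $H^k(C^*)$: since $(d_C^{k-1})_{\QQ}=0$, any boundary $d_C^{k-1}b$ lies in $\Tor C^k$ and hence becomes zero in $C^k_{\ZZ}$, so $\ov a$ is independent of the cocycle representative.

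Next I would show that the resulting map $H^k(C^*)\to C^k_{\ZZ}$ kills $\Tor H^k(C^*)$ and so factors through $H^k(C^*)_{\ZZ}$, giving a well-defined $\iota_C^k$. If $[a]$ is torsion, say $n[a]=0$, then $na=d_C^{k-1}b\in\Tor C^k$, and since $C^k_{\ZZ}$ is torsion-free, this forces $a\in\Tor C^k$, so $\ov a=0$. The same argument, read backwards, proves injectivity of $\iota_C^k$: if $\ov a=0$ then $a\in\Tor C^k$, so $n a=0$ in $C^k$ for some $n$, hence $n[a]=0$ in $H^k(C^*)$, and thus $[a]_{\ZZ}=0$.

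The exponent bound is the quantitative heart of the statement. Setting $e:=\exp\Tor C^{k+1}$, the hypothesis $(d_C^k)_{\QQ}=0$ gives $d_C^k(C^k)\subseteq\Tor C^{k+1}$, so for every $c\in C^k$ one has $d_C^k(ec)=e\cdot d_C^k(c)=0$. Therefore $eC^k\subseteq Z^k(C^*)$, which implies that every element of the form $e\,\ov c\in C^k_{\ZZ}$ lies in the image of $\iota_C^k$. Consequently $e$ annihilates $C^k_{\ZZ}/\iota_C^k(H^k(C^*)_{\ZZ})$, giving $\exp\iota_C^k\leq e=\exp\Tor C^{k+1}$ as required.

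For the naturality square, I would just observe that both $f^k\circ\iota_C^k$ and $\iota_D^k\circ H^k(f^*)_{\ZZ}$ are given, on a cocycle $a\in Z^k(C^*)$, by sending $[a]_{\ZZ}$ to the image of $f^k(a)$ in $D^k_{\ZZ}$; this is immediate from the definitions together with the fact that $f^k$ preserves the torsion subgroups. There is no real obstacle to this proof: it is a careful but routine bookkeeping exercise with torsion, using the hypothesis $(d_C^*)_{\QQ}=0$ at exactly two places, namely to control boundaries (for well-definedness) and to control cocycles up to the factor $e$ (for the exponent bound).
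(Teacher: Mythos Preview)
Your proof is correct and defines exactly the same map as the paper. The paper packages the construction via the short exact sequence of complexes $0\to\Tor C^*\to C^*\to C^*/\Tor C^*\to 0$ (the quotient carrying the zero differential) and its associated long exact sequence, but the resulting map $j_C^k:H^k(C^*)\to C^k/\Tor C^k$ is precisely your ``send a cocycle to its class mod torsion'', and the exponent bound there comes from the same observation that $d_C^k$ lands in $\Tor C^{k+1}$.
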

\begin{proof}
Let $(C^*,d^*_C)$ be a complex of finitely generated abelian groups satisfying $(d^*_C)_{\QQ}=0$.
We have $d^k(C^k)\leq\Tor C^{k+1}$, so the inclusion of $\Tor C^*$ and the projection
to $C^*/\Tor C^*$ give a short exact sequence of complexes
\begin{equation}
\label{eq:tor-C-quot}
0\to (\Tor C^*,d^*_C)\to (C^*,d^*_C)\to (C^*/\Tor^*,0)\to 0.
\end{equation}
The resulting cohomology long exact sequence is:
$$\dots\to H^k(\Tor C^*)\to H^*(C^*)\stackrel{j_C^k}{\longrightarrow} C^k/\Tor C^k\to H^{k+1}(\Tor C^*)\to\dots$$
Applying the functor $(\cdot)_{\ZZ}$ to the map in the middle we get the morphism
$\iota_C^k:=(j_C^k)_{\ZZ}:H^k(C^*)_{\ZZ}\to C^k_{\ZZ}$, and by Lemma \ref{lemma:exp-f-vs-exp-f-Z}
we have
$$\exp\iota_C^k\leq\exp j_C^k\leq\exp H^{k+1}(\Tor C^*)\leq\exp C^{k+1},$$
where the second inequality follows from the exactness.
The second part of the lemma follows from the naturality of the sequence (\ref{eq:tor-C-quot})
and the naturality of the cohomology long exact sequence.
\end{proof}

\subsection{The spectral sequence argument}
\label{ss:conclusion-proof}

The naturality of Serre's spectral sequence applied to the diagram
(\ref{eq:diagrama-fibracions}) gives rise, for every $i$, to a collection of commutative
diagrams of finitely generated abelian groups:
\begin{equation}
\label{eq:Serre-SS-fibracions}
\xymatrix{E_r^{p,q}(\Pi_{3,i}) \ar[r]^{U_i^*}\ar[d]^{d_r^{p,q}(\Pi_{3,i})} &
E_r^{p,q}(\Pi_{2,i}) \ar[r]^{W_i^*}\ar[d]^{d_r^{p,q}(\Pi_{2,i})} &
E_r^{p,q}(\Pi_{1,i}) \ar[d]^{d_r^{p,q}(\Pi_{1,i})}&
E_r^{p,q}(\Pi_0) \ar[l]_-{Z_i^*} \ar[d]^{d_r^{p,q}(\Pi_0)} \\
E_r^{p+r,q-r+1}(\Pi_{3,i}) \ar[r]^{U_i^*} &
E_r^{p+r,q-r+1}(\Pi_{2,i}) \ar[r]^{W_i^*} &
E_r^{p+r,q-r+1}(\Pi_{1,i}) &
E_r^{p+r,q-r+1}(\Pi_0), \ar[l]_-{Z_i^*} }
\end{equation}
where the morphisms $Z_i^*$ are isomorphisms.

\begin{lemma}
\label{lemma:potencia-de-n-i}
For every $i$ and $p,q$, the image of
$U_i^*:E_2^{p,q}(\Pi_{3,i})_{\ZZ}\to E_2^{p,q}(\Pi_{2,i})_{\ZZ}$
is equal to $n_i^p E_2^{p,q}(\Pi_{2,i})_{\ZZ}$.
\end{lemma}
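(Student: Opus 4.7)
I would identify both $E_2$ pages of the spectral sequences as cohomology of tori with fixed coefficients and then track the image of the pullback along $q_i$ explicitly.

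First, I would verify that the leftmost square of (\ref{eq:diagrama-fibracions}) is a pullback of $(Q_i/\Theta_i)$-principal coverings: $V_i$ is the quotient by a free action, and $q_i$ is the quotient by the translation action of $Q_i/\Theta_i\simeq T_{K_0}[n_i]$, with $\Pi_{2,i}$ equivariant with respect to these actions. It follows that $\Pi_{3,i}$ and $\Pi_{2,i}$ have the same fiber $F_i:=(X_{K,i})_{\Theta_i}$, and both are locally trivial fibrations with trivial monodromy (as stated immediately after (\ref{eq:diagrama-fibracions})). Setting $M:=H^q(F_i)$, naturality of Serre's spectral sequence then identifies
$$E_2^{p,q}(\Pi_{2,i})=H^p(T_{K_0};M),\qquad E_2^{p,q}(\Pi_{3,i})=H^p(T_{K_0}/T_{K_0}[n_i];M),$$
with $U_i^*$ corresponding to the pullback map $q_i^*$ on cohomology with coefficients $M$.

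Next, I would compute $q_i^*$. Writing $T_{K_0}=V_K/K_0^*$, we have $T_{K_0}[n_i]=\tfrac{1}{n_i}K_0^*/K_0^*$ and $T_{K_0}/T_{K_0}[n_i]=V_K/\tfrac{1}{n_i}K_0^*$, so $V_K$ is the universal cover of both tori and the map induced by $q_i$ on fundamental groups is the inclusion $K_0^*\hookrightarrow\tfrac{1}{n_i}K_0^*$. On $H^1(-;\ZZ)=\Hom(\pi_1,\ZZ)$ the map $q_i^*$ is therefore restriction: for $\phi\in\Hom(\tfrac{1}{n_i}K_0^*,\ZZ)$ and $k\in K_0^*$ one has $\phi(k)=n_i\,\phi(k/n_i)\in n_i\ZZ$, and conversely every $\psi\in n_i\Hom(K_0^*,\ZZ)$ extends to a homomorphism $\tfrac{1}{n_i}K_0^*\to\ZZ$. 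Hence the image of $q_i^*|_{H^1}$ is exactly $n_i H^1(T_{K_0};\ZZ)$. Since $q_i^*$ is a ring homomorphism and $H^*(T_{K_0};\ZZ)$ is the exterior algebra on $H^1$, the image of $q_i^*$ on $H^p(T_{K_0};\ZZ)$ equals $n_i^p H^p(T_{K_0};\ZZ)$.

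Finally, I would pass to coefficients $M$ and then to the functor $(\cdot)_{\ZZ}$. Because $H^*(T_{K_0};\ZZ)$ is free, universal coefficients supply a natural isomorphism $H^p(T_{K_0};M)\cong H^p(T_{K_0};\ZZ)\otimes M$ under which $U_i^*$ becomes $q_i^*\otimes\Id_M$, so its image is $n_i^p H^p(T_{K_0};\ZZ)\otimes M$. Tensoring with the free abelian group $H^p(T_{K_0};\ZZ)$ commutes with $(\cdot)_{\ZZ}$ (the result is already torsion-free), so the image of $U_i^*_{\ZZ}$ in $E_2^{p,q}(\Pi_{2,i})_{\ZZ}$ equals $n_i^p H^p(T_{K_0};\ZZ)\otimes M_{\ZZ}=n_i^p E_2^{p,q}(\Pi_{2,i})_{\ZZ}$, as claimed. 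The main substantive step is the computation of $q_i^*|_{H^1}$; the remaining manipulations with exterior powers, universal coefficients, and torsion are routine.
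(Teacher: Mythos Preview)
Your proof is correct and follows essentially the same approach as the paper's: identify $E_2^{p,q}$ of both fibrations as $H^p(\text{torus})\otimes H^q((X_{K,i})_{\Theta_i})$ via trivial monodromy and torsion-free torus cohomology, then note that $U_i^*$ is $q_i^*\otimes\Id$ and that the image of $q_i^*$ on $H^p$ is $n_i^p H^p(T_{K_0})$. You simply spell out more explicitly the computation of $q_i^*$ on $H^1$ via fundamental groups and the compatibility of the tensor decomposition with the functor $(\cdot)_{\ZZ}$, which the paper leaves implicit.
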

\begin{proof}
The cohomologies of $T_{K_0}/T_{K_0}[n_i]$ and $T_{K_0}$ are torsion free, so
there are natural identifications
$E_2^{p,q}(\Pi_{3,i})\simeq H^p(T_{K_0}/T_{K_0}[n_i];H^q((X_{K,i})_{\Theta_i}))
\simeq H^p(T_{K_0}/T_{K_0}[n_i])\otimes H^q((X_{K,i})_{\Theta_i})$
and
$E_2^{p,q}(\Pi_{2,i})\simeq H^p(T_{K_0};H^q((X_{K,i})_{\Theta_i}))
\simeq H^p(T_{K_0})\otimes H^q((X_{K,i})_{\Theta_i})$.
Via these identifications, the map
$U_i^*:E_2^{p,q}(\Pi_{3,i})_{\ZZ}\to E_2^{p,q}(\Pi_{2,i})_{\ZZ}$
is induced by the morphism
$H^p(T_{K_0}/T_{K_0}[n_i])\to H^p(T_{K_0})$
arising from the projection $T_{K_0}\to T_{K_0}/T_{K_0}[n_i]$.
Since the image of the morphism $H^p(T_{K_0}/T_{K_0}[n_i])\to H^p(T_{K_0})$
is $n_i^pH^p(T_{K_0})$, the lemma follows.
\end{proof}

\begin{lemma}
\label{lemma:exponent-i-torsio-Pi-2-i}
Let $C_X:=\max_k\{\exp\Tor H^k(X_K)\}$.
There exists a constant $C_{\epsilon}$ such that, for
every $i$, we have
\begin{equation}
\label{eq:exp-W-i}
\exp(W_i^*:E^{p,q}_2(\Pi_{2,i})_{\ZZ}\to E^{p,q}_2(\Pi_{1,i})_{\ZZ})\leq C_\epsilon^q
\end{equation}
and
\begin{equation}
\label{eq:exp-tor-E-2-Pi-2-i}
\exp\Tor E^{p,q}_2(\Pi_{2,i})\leq C_XC_\epsilon^q,\qquad \exp\Tor E^{p,q}_2(\Pi_{3,i})\leq C_XC_\epsilon^q.
\end{equation}
\end{lemma}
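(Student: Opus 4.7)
The key observation is that the groups $\Theta_i$ have uniformly bounded order. Indeed, by Lemma \ref{lemma:Ker-w-i-Ker-rho} combined with the inequality (\ref{eq:una-estrella}), we have
$$|\Theta_i| \leq |\Ker w_i| = |\Ker \rho_{K,i}| \leq C_2,$$
so it is natural to take $C_\epsilon := C_2$.

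Next, I transfer this to a bound on the cohomology of the Borel construction $(X_{K,i})_{\Theta_i}$. Since $\Theta_i \leq L_i$, the definition of $L_i$ (given right before (\ref{eq:index-J-L})) guarantees that the $\Theta_i$-action on $H^*(X_{K,i})$ is trivial. The $K^*$-equivariant homeomorphism $\zeta_i : X_{K,i} \to X_K$ introduced in Subsection \ref{ss:groups-reenter} induces a graded isomorphism $H^*(X_{K,i}) \simeq H^*(X_K)$, so, by Lemma \ref{lemma:finite-generated-cohomology-over-integers}, $H^*(X_{K,i})$ is finitely generated and its torsion exponent is uniformly bounded by $C_X := \max_k \exp \Tor H^k(X_K)$. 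Hence Lemma \ref{lemma:equiv-cohom-exp} applied to the $\Theta_i$-action on $X_{K,i}$ yields, for every $q$,
$$\exp\bigl(\iota^* : H^q((X_{K,i})_{\Theta_i}) \to H^q(X_{K,i})\bigr) \leq |\Theta_i|^q \leq C_\epsilon^q,$$
$$\exp \Tor H^q((X_{K,i})_{\Theta_i}) \leq C_X |\Theta_i|^q \leq C_X C_\epsilon^q.$$

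Finally, I pass these estimates to the $E_2$-pages. The three fibrations $\Pi_{1,i}, \Pi_{2,i}, \Pi_{3,i}$ have trivial monodromy (as noted immediately after (\ref{eq:diagrama-fibracions})), and their bases $T_{K_0}$ and $T_{K_0}/T_{K_0}[n_i]$ are tori, so their integral cohomology is free. Combining trivial monodromy with freeness of the base cohomology gives natural tensor-product identifications
$$E_2^{p,q}(\Pi_{2,i}) \simeq H^p(T_{K_0}) \otimes H^q((X_{K,i})_{\Theta_i}),$$
and analogously for $\Pi_{1,i}$ and $\Pi_{3,i}$; under these identifications $W_i^*$ is $\Id \otimes \iota^*$. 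Since tensoring by the finitely generated free $\ZZ$-module $H^p(T_{K_0})$ commutes with cokernels and with the torsion functor, the above two bounds translate, respectively, into (\ref{eq:exp-W-i}) and the $\Pi_{2,i}$-half of (\ref{eq:exp-tor-E-2-Pi-2-i}); the $\Pi_{3,i}$-half follows identically, using that $H^*(T_{K_0}/T_{K_0}[n_i])$ is also free.

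The only step that needs care is this last identification of the $E_2$-pages as tensor products, which relies on freeness of $H^*(T_{K_0})$ (and of $H^*(T_{K_0}/T_{K_0}[n_i])$) together with triviality of the monodromy; once this is in place, the rest is routine exponent bookkeeping through Lemma \ref{lemma:equiv-cohom-exp}.
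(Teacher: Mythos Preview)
Your proposal is correct and follows essentially the same approach as the paper: bound $|\Theta_i|$ via Lemma~\ref{lemma:Ker-w-i-Ker-rho} and (\ref{eq:una-estrella}), apply Lemma~\ref{lemma:equiv-cohom-exp} to the $\Theta_i$-action on $X_{K,i}$, and then use the tensor-product identification of the $E_2$-pages (coming from trivial monodromy and torsion-freeness of the torus cohomology) to transfer the bounds. The one small step you leave implicit is that (\ref{eq:exp-W-i}) concerns the map after applying $(\cdot)_{\ZZ}$, whereas your tensor-product argument bounds the exponent of the integral map $W_i^*:E_2^{p,q}(\Pi_{2,i})\to E_2^{p,q}(\Pi_{1,i})$; the paper bridges this with Lemma~\ref{lemma:exp-f-vs-exp-f-Z}, which says $\exp(f_{\ZZ})\leq \exp f$.
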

\begin{proof}
By Lemma \ref{lemma:exp-f-vs-exp-f-Z}, to prove (\ref{eq:exp-W-i})
it suffices to prove the existence of $C_{\epsilon}$ satisfying
$$\exp(W_i^*:E^{p,q}_2(\Pi_{2,i})\to E^{p,q}_2(\Pi_{1,i}))\leq C_{\epsilon}^q$$
for each $i$.
The cohomology of $T_{K_0}$ is torsion free. So we have identifications
$$E_2^{p,q}(\Pi_{2,i})\simeq H^p(T_{K_0};H^q((X_{K,i})_{\Theta_i}))
\simeq H^p(T_{K_0})\otimes H^q((X_{K,i})_{\Theta_i})$$
and
$$E_2^{p,q}(\Pi_{1,i})\simeq H^p(T_{K_0};H^q(X_{K,i}))
\simeq H^p(T_{K_0})\otimes H^q(X_{K,i}).$$
Via these isomorphisms,
$W_i^*:E_2^{p,q}(\Pi_{2,i})_{\ZZ}\to E_2^{p,q}(\Pi_{1,i})_{\ZZ}$
is induced by the identity in $H^p(T_{K_0})$ and the morphism
$H^q((X_{K,i})_{\Theta_i})\to H^q(X_{K,i})$
induced by the inclusion $X_{K,i}\hookrightarrow (X_{K,i})_{\Theta_i}$.

By Lemma \ref{lemma:Ker-w-i-Ker-rho}, we can identify $\Ker w_i$ with $\Ker\rho_{K,i}$.
Since $\Theta_i$ is a subgroup of $\Ker w_i$, it follows that
$|\Theta_i|\leq C_{\epsilon}:=C\cdot C_1$ (recall that $C_1$ was chosen at the beginning
of Section \ref{s:proof-thm:main-rotation-morphism}; see also Subsection
\ref{ss:subgroups-of-rho-K-i-G-i} for the definition of the constant $C$).
With this in mind, (\ref{eq:exp-W-i}) follows from
Lemma \ref{lemma:equiv-cohom-exp}, using the fact that $X_K$ and $X_{K,i}$ are homeomorphic.
Similarly, the inequalities (\ref{eq:exp-tor-E-2-Pi-2-i}) follow from Lemma \ref{lemma:equiv-cohom-exp},
using the fact that the cohomology of $T_{K_0}/T_{K_0}[n_i]$ and $T_{K_0}$ are both torsion free, so that
the torsion of $E_2^{p,q}(\Pi_{j,i})$ ($j=2,3$) is the direct sum of a number of copies of the torsion of
$(X_{K,i})_{\Theta_i}$.
\end{proof}

\begin{lemma}
\label{lemma:isomorfisme-racionals}
The horizontal maps in the diagram obtained
by tensoring (\ref{eq:Serre-SS-fibracions}) by $\QQ$
are all isomorphisms of vector spaces.
\end{lemma}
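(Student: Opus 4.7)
The plan is to handle each of the three horizontal maps $Z_i^*$, $W_i^*$, $U_i^*$ separately, in each case verifying the hypotheses of Lemma \ref{lemma:isom-Serre-SS-rational} applied to the corresponding column of the diagram (\ref{eq:diagrama-fibracions}). By Lemma \ref{lemma:tensoring-Q-spectral-sequence}, proving that each of these maps is a $\QQ$-isomorphism between the $E_r^{p,q}$ terms of the integral Serre spectral sequences is equivalent to showing it becomes an isomorphism after tensoring with $\QQ$.

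For $Z_i^*$ the conclusion is immediate, since $Z_i$ is induced by the $K^*$-equivariant homeomorphism $\zeta_i:X_{K,i}\to X_K$, which produces a homeomorphism of total spaces over the identity on $T_{K_0}$, hence an isomorphism of spectral sequences on every page. For $W_i^*$ we apply Lemma \ref{lemma:isom-Serre-SS-rational} to the square between $\Pi_{1,i}$ and $\Pi_{2,i}$. The map on bases is the identity, and both fibrations have trivial monodromy, as recorded at the end of Subsection \ref{ss:diagram-fibrations}. The restriction of $W_i$ to a fiber is the inclusion of $X_{K,i}$ into its Borel construction $(X_{K,i})_{\Theta_i}$; since by construction $\Theta_i\leq L_i$, the group $\Theta_i$ acts trivially on $H^*(X_{K,i})$, and in particular on $H^*(X_{K,i};\QQ)$, so Lemma \ref{lemma:equiv-cohom-Q-trivial} yields that this fiber inclusion is a $\QQ$-isomorphism on cohomology.

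For $U_i^*$ we apply Lemma \ref{lemma:isom-Serre-SS-rational} to the square between $\Pi_{2,i}$ and $\Pi_{3,i}$. Both fibrations have trivial monodromy. Because $V_i$ is the quotient by the free action of $Q_i/\Theta_i$ and $q_i$ is the quotient of $T_{K_0}$ by the corresponding free translation action of $T_{K_0}[n_i]$, the restriction of $V_i$ to a fiber of $\Pi_{2,i}$ is a homeomorphism onto a fiber of $\Pi_{3,i}$. The remaining hypothesis is that $q_i^*:H^*(T_{K_0}/T_{K_0}[n_i])\to H^*(T_{K_0})$ is a $\QQ$-isomorphism, which is standard: $q_i$ is a finite covering between tori of equal rational dimension, and either by the transfer map or by direct inspection of the exterior algebra presentation of torus cohomology, $q_i^*$ becomes an isomorphism after tensoring with $\QQ$ (concretely, it is multiplication by $n_i$ on a suitable integral basis of $H^1$, hence by $n_i^p$ on $H^p$, in accordance with Lemma \ref{lemma:potencia-de-n-i}). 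There is no real obstacle here: the whole statement is a three-fold direct application of Lemma \ref{lemma:isom-Serre-SS-rational}, with the only mildly delicate point being the use of $\Theta_i\leq L_i$ to trivialize the action on fiber cohomology for the $W_i^*$ square.
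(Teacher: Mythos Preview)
Your proof is correct and follows essentially the same approach as the paper: a direct application of Lemma \ref{lemma:isom-Serre-SS-rational} to each of the three squares in (\ref{eq:diagrama-fibracions}), using Lemma \ref{lemma:equiv-cohom-Q-trivial} for the $W_i$ square and the finite-cover argument for $q_i$. Your explicit observation that $\Theta_i\leq L_i$ (hence $\Theta_i$ acts trivially on $H^*(X_{K,i})$) is the justification the paper leaves implicit when invoking Lemma \ref{lemma:equiv-cohom-Q-trivial}.
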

\begin{proof}
We use Lemma \ref{lemma:isom-Serre-SS-rational} to prove that
$E_r^{p,q}(\Pi_{3,i})_{\QQ}\to E_r^{p,q}(\Pi_{2,i})_{\QQ}$
is an isomorphism: the fibers of the fibrations $\Pi_{3,i}$ and
$\Pi_{2,i}$ are the same, and the map at the
level of bases, $T_{K_0}\to T_{K_0}/T_{K_0}[n_i]$, induces a $\QQ$-isomorphism
in cohomology. To prove that
$E_r^{p,q}(\Pi_{2,i})_{\QQ}\to E_r^{p,q}(\Pi_{1,i})_{\QQ}$
is an isomorphism, we again use Lemma \ref{lemma:isom-Serre-SS-rational}.
The fibrations have the same base, and the induced map between fibers is the
inclusion $X_{K,i}\to (X_{K,i})_{\Theta_i}$, which induces a $\QQ$-isomorphism
in cohomology by Lemma \ref{lemma:equiv-cohom-Q-trivial}. Finally,
$E_r^{p,q}(\Pi_{0})_{\QQ}\to E_r^{p,q}(\Pi_{1,i})_{\QQ}$ is an isomorphism because
$E_r^{p,q}(\Pi_{0})\to E_r^{p,q}(\Pi_{1,i})$ already is.
\end{proof}

An immediate consequence of the previous lemma is the following one.

\begin{lemma}
\label{lemma:the-other-sequences-also-degenerate}
Suppose that for some $k\geq 2$, for every $p,q$, and for every $r$ satisfying $2\leq r\leq k$,
the differentials
$d_r^{p,q}(\Pi_0)_{\QQ}$ are zero. Then, for every $p,q$ and every $r$ satisfying $2\leq r\leq k$, and for any $i$,
the differentials
$d_r^{p,q}(\Pi_{1,i})_{\QQ}$, $d_r^{p,q}(\Pi_{2,i})_{\QQ}$ and $d_r^{p,q}(\Pi_{3,i})_{\QQ}$ are also zero.
\end{lemma}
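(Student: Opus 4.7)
The plan is to deduce this lemma directly from Lemma \ref{lemma:isomorfisme-racionals}, which provides, for every $r\geq 2$ and every $p,q$, commutative squares in the diagram (\ref{eq:Serre-SS-fibracions}) whose horizontal arrows $Z_i^*$, $W_i^*$, $U_i^*$ become honest isomorphisms of $\QQ$-vector spaces after tensoring with $\QQ$. Once this is in hand, the four vertical differentials in each column of (\ref{eq:Serre-SS-fibracions}) are intertwined by $\QQ$-linear isomorphisms, so they must vanish or not vanish simultaneously.

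Concretely, I would proceed as follows. Fix $i$, $p$, $q$ and $2\leq r\leq k$, and assume $d_r^{p,q}(\Pi_0)_{\QQ}=0$. The rightmost square of (\ref{eq:Serre-SS-fibracions}), tensored with $\QQ$, is commutative, and by Lemma \ref{lemma:isomorfisme-racionals} both of its horizontal arrows $(Z_i^*)_\QQ$ are isomorphisms. Hence
$$(Z_i^*)_\QQ\circ d_r^{p,q}(\Pi_0)_\QQ = d_r^{p,q}(\Pi_{1,i})_\QQ\circ (Z_i^*)_\QQ,$$
and since the left side is zero and $(Z_i^*)_\QQ$ is invertible on the source of $d_r^{p,q}(\Pi_{1,i})_\QQ$, we conclude $d_r^{p,q}(\Pi_{1,i})_\QQ=0$. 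Applying the same reasoning to the middle square, whose horizontal arrows $(W_i^*)_\QQ$ are also isomorphisms by Lemma \ref{lemma:isomorfisme-racionals}, we obtain $d_r^{p,q}(\Pi_{2,i})_\QQ=0$. Finally, the leftmost square, with its horizontal isomorphisms $(U_i^*)_\QQ$, yields $d_r^{p,q}(\Pi_{3,i})_\QQ=0$.

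There is no real obstacle here: all the substantive work has already been done in establishing that the comparison maps between the four spectral sequences induce $\QQ$-isomorphisms on every page (Lemma \ref{lemma:isom-Serre-SS-rational} and Lemma \ref{lemma:isomorfisme-racionals}). The only point worth flagging is that no induction on $r$ is required, because Lemma \ref{lemma:isomorfisme-racionals} already furnishes the horizontal isomorphisms on every page simultaneously; one can therefore treat each value of $r$ in the range $2\leq r\leq k$ independently. In particular, the conclusion holds for all the pairs $(p,q)$ and all the three fibrations $\Pi_{1,i}$, $\Pi_{2,i}$, $\Pi_{3,i}$, uniformly in $i$.
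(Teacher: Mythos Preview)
Your proof is correct and matches the paper's approach exactly: the paper states this lemma as an immediate consequence of Lemma~\ref{lemma:isomorfisme-racionals}, and your argument spells out precisely why the $\QQ$-isomorphisms on the horizontal arrows force the four columns of differentials to vanish simultaneously.
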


Another consequence of Lemma \ref{lemma:isomorfisme-racionals} is this.

\begin{lemma}
\label{lemma:horizontal-maps-inclusions}
Suppose that for some $k\geq 2$, for every $p,q$, and for every $r$ satisfying $2\leq r\leq k$,
the differentials
$d_r^{p,q}(\Pi_0)_{\QQ}$ are zero. Then, for every $p,q$ and every $r$ satisfying
$2\leq r\leq k$, and for any $i$,
when we apply  the functor $(\cdot)\mapsto (\cdot)_{\ZZ}$ to the diagram (\ref{eq:Serre-SS-fibracions}),
all horizontal maps become inclusions.
\end{lemma}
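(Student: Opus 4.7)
The plan is to deduce the lemma directly from Lemma \ref{lemma:isomorfisme-racionals}, using the elementary fact that a $\QQ$-isomorphism between finitely generated abelian groups induces an injection after applying the functor $(\cdot)_{\ZZ}$.

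First I would invoke Lemma \ref{lemma:isomorfisme-racionals} to conclude that, at every page $r \geq 2$, each of the horizontal maps $U_i^*$, $W_i^*$, and $Z_i^*$ in (\ref{eq:Serre-SS-fibracions}) becomes an isomorphism of $\QQ$-vector spaces after tensoring with $\QQ$. This is where the hypothesis $d_r^{p,q}(\Pi_0)_{\QQ}=0$ for $2 \leq r \leq k$ enters the picture: via Lemma \ref{lemma:the-other-sequences-also-degenerate} it guarantees that all four spectral sequences appearing in (\ref{eq:Serre-SS-fibracions}) degenerate rationally up to page $k$, which in turn makes the rational comparisons across pages unambiguous and ensures the applicability of Lemma \ref{lemma:isomorfisme-racionals} at every relevant page.

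Next I would record the following elementary observation. For any morphism $f \colon M \to N$ of finitely generated abelian groups, the canonical map $M_{\ZZ} \to M_{\QQ}$ is injective (since $M_{\ZZ}=M/\Tor M$ is free), and $f_{\ZZ}$ agrees with the restriction of $f_{\QQ}$ to $M_{\ZZ}$; consequently, if $f_{\QQ}$ is injective then $\Ker f_{\ZZ} \subseteq M_{\ZZ} \cap \Ker f_{\QQ} = 0$. Applying this observation to each horizontal arrow in (\ref{eq:Serre-SS-fibracions}), whose $\QQ$-version is an isomorphism by the previous step, yields that after $(\cdot)_{\ZZ}$ they become injective, i.e., inclusions, as required.

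I do not foresee a significant technical obstacle here: all the hard work has already been carried out in the proof of Lemma \ref{lemma:isomorfisme-racionals}, which itself rests on the Hilton--Roitberg-type Lemma \ref{lemma:isom-Serre-SS-rational}. The only point to double-check carefully is the compatibility of $(\cdot)_{\ZZ}$ with the natural maps between entries of distinct Serre spectral sequences at a fixed page, but this is immediate from the naturality built into Lemma \ref{lemma:tensoring-Q-spectral-sequence}.
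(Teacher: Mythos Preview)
Your proposal is correct and matches the paper's approach: the paper simply records the lemma as ``another consequence of Lemma \ref{lemma:isomorfisme-racionals}'', and your argument (a $\QQ$-isomorphism of finitely generated abelian groups yields an injection after $(\cdot)_{\ZZ}$) is exactly the intended one-line justification. One minor point: Lemma \ref{lemma:isomorfisme-racionals} is stated and proved for \emph{all} pages $r\geq 2$ with no hypothesis on the differentials, so your explanation of where the vanishing hypothesis enters is unnecessary---the hypothesis is carried along only because the lemma is invoked inside the inductive scheme of Theorem \ref{thm:degeneracio-ssSerre}, not because the proof requires it.
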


\begin{lemma}
\label{lemma:inclusions-pagines}
Suppose that for some $k\geq 2$, for every $p,q$, and for every $r$ satisfying $2\leq r<k$,
the differentials $d_r^{p,q}(\Pi_0)_{\QQ}$ are zero.
Then, for every $p,q$ and for any $i$,
there is a commutative diagram in which the vertical maps are inclusions,
and the horizontal maps are the natural
ones induced by the maps in diagram (\ref{eq:diagrama-fibracions}):
\begin{equation}
\label{eq:inclusions-pagines}
\xymatrix{E_k^{p,q}(\Pi_{3,i})_{\ZZ} \ar@{^{(}->}[r]^{U_i^*}\ar@{_{(}->}[d]^{\iota_k^{p,q}(\Pi_{3,i})} &
E_k^{p,q}(\Pi_{2,i})_{\ZZ} \ar@{^{(}->}[r]^{W_i^*}\ar@{_{(}->}[d]^{\iota_k^{p,q}(\Pi_{2,i})} &
E_k^{p,q}(\Pi_{1,i})_{\ZZ} \ar@{_{(}->}[d]^{\iota_k^{p,q}(\Pi_{1,i})}
\\
E_2^{p,q}(\Pi_{3,i})_{\ZZ} \ar@{^{(}->}[r]^{U_i^*} &
E_2^{p,q}(\Pi_{2,i})_{\ZZ} \ar@{^{(}->}[r]^{W_i^*} &
E_2^{p,q}(\Pi_{1,i})_{\ZZ}.
}
\end{equation}
%
Furthermore, for $j=1,2,3$,
\begin{equation}
\label{eq:cota-superior-exponent-iota}
\exp \iota_k^{p,q}(\Pi_{j,i})\leq C_X^{k-2}C^{q(q-1)/2},
\end{equation}
where $C_X,C_{\epsilon}$ are the numbers given by Lemma \ref{lemma:exponent-i-torsio-Pi-2-i}.
\end{lemma}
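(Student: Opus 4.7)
The plan is to proceed by induction on $k$, with base case $k=2$: there $\iota_2^{p,q}(\Pi_{j,i})$ is the identity, the bound holds trivially, and the horizontal arrows of both rows coincide and are inclusions by Lemma \ref{lemma:horizontal-maps-inclusions} (whose hypothesis is vacuous when $k=2$). For the inductive step, suppose the statement holds for some $k\geq 2$ and assume $d_r^{p,q}(\Pi_0)_\QQ=0$ for every $2\leq r\leq k$; I deduce the statement for $k+1$.

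First, Lemma \ref{lemma:the-other-sequences-also-degenerate} applied with parameter $k$ upgrades the hypothesis to $d_r^{p,q}(\Pi_{j,i})_\QQ=0$ for $j=1,2,3$ and every $2\leq r\leq k$. In particular, the page-$k$ complex $(E_k^{*,*}(\Pi_{j,i}),d_k)$ satisfies $(d_k)_\QQ=0$, so Lemma \ref{lemma:cohomology-d-QQ-zero} supplies natural inclusions
$$E_{k+1}^{p,q}(\Pi_{j,i})_\ZZ\hookrightarrow E_k^{p,q}(\Pi_{j,i})_\ZZ$$
of exponent at most $\exp\Tor E_k^{p+k,q-k+1}(\Pi_{j,i})$. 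Composing with $\iota_k^{p,q}(\Pi_{j,i})$ from the inductive hypothesis defines $\iota_{k+1}^{p,q}(\Pi_{j,i})$. The commutative diagram (\ref{eq:inclusions-pagines}) at level $k+1$ decomposes vertically into two stacked sub-diagrams: the ``new'' squares between pages $k+1$ and $k$ commute by the naturality clause of Lemma \ref{lemma:cohomology-d-QQ-zero} applied to the morphisms of page-$k$ complexes $U_i^*,W_i^*$, and the ``old'' squares between pages $k$ and $2$ commute by the inductive hypothesis. The horizontal arrows of the top row are inclusions because Lemma \ref{lemma:isomorfisme-racionals} makes them $\QQ$-isomorphisms, and any $\QQ$-isomorphism of finitely generated abelian groups is injective after passing to the torsion-free quotient.

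To control the exponent, iterate Lemma \ref{lemma:torsion-does-not-increase} along the rationally acyclic complexes at pages $2,\dots,k-1$ to obtain $\exp\Tor E_r^{p',q'}(\Pi_{j,i})\leq\exp\Tor E_2^{p',q'}(\Pi_{j,i})$ for every $2\leq r\leq k$ and every $p',q'$. Lemma \ref{lemma:exponent-i-torsio-Pi-2-i} gives $\exp\Tor E_2^{p',q'}(\Pi_{j,i})\leq C_XC_\epsilon^{q'}$ for $j=2,3$, and a K\"unneth-type estimate gives $\exp\Tor E_2^{p',q'}(\Pi_{1,i})\leq C_X$ (since $T_{K_0}$ has torsion-free cohomology and $K_0^*$ acts trivially on $H^*(X_{K,i})$ by the construction in Subsection \ref{ss:def-K-0} transported through $\zeta_i$). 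Multiplying the per-step exponent contributions via Lemma \ref{lemma:basic-properties-exp}(2), and observing that factors for which $q-r+1<0$ are trivially $1$, the elementary bound $\sum_{r=2}^{\min(k,q+1)}(q-r+1)\leq q(q-1)/2$ yields (\ref{eq:cota-superior-exponent-iota}).

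The main obstacle is bookkeeping, but the underlying mechanism is conceptually clean: each hypothesis $(d_r)_\QQ=0$ simultaneously furnishes the next page-to-page inclusion with controlled exponent (via Lemma \ref{lemma:cohomology-d-QQ-zero}) and the fact that torsion is non-increasing across pages (via Lemma \ref{lemma:torsion-does-not-increase}), so the composed exponents remain controlled as one climbs from page $2$ to page $k+1$.
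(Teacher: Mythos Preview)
Your proof is correct and follows essentially the same approach as the paper: both arguments stack the page-to-page inclusions furnished by Lemma~\ref{lemma:cohomology-d-QQ-zero} (after invoking Lemma~\ref{lemma:the-other-sequences-also-degenerate}), bound each per-step exponent via Lemma~\ref{lemma:torsion-does-not-increase} reduced to the $E_2$-page and then Lemma~\ref{lemma:exponent-i-torsio-Pi-2-i}, and multiply using Lemma~\ref{lemma:basic-properties-exp}(2). The only cosmetic difference is that you frame the stacking as an induction on $k$, whereas the paper writes it as a direct composition of the diagrams for $r=2,\dots,k-1$; one minor slip is that in your base case the hypothesis of Lemma~\ref{lemma:horizontal-maps-inclusions} is not actually vacuous at $k=2$, but this is harmless since the page-$2$ horizontal injectivity follows directly from Lemma~\ref{lemma:isomorfisme-racionals}.
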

\begin{proof}
By Lemma \ref{lemma:the-other-sequences-also-degenerate}, for every $p,q$, and for every $r$ satisfying
$2\leq r<k$, the differentials
$d_r^{p,q}(\Pi_{1,i})_{\QQ}$, $d_r^{p,q}(\Pi_{2,i})_{\QQ}$ and $d_r^{p,q}(\Pi_{3,i})_{\QQ}$ are zero.
Choose some $r$ satisfying $2\leq r<k$.
Since, for every $i,j$, $E^{*,*}(\Pi_{j,i})_{r+1}=H^*(E^{*,*}_r(\Pi_{j,i}),d_r)$, applying
Lemma \ref{lemma:cohomology-d-QQ-zero} we obtain a commutative diagram
\begin{equation}
\label{eq:bricks}
\xymatrix{E_{r+1}^{p,q}(\Pi_{3,i})_{\ZZ} \ar@{^{(}->}[r]\ar@{_{(}->}[d] &
E_{r+1}^{p,q}(\Pi_{2,i})_{\ZZ} \ar@{^{(}->}[r]\ar@{_{(}->}[d] &
E_{r+1}^{p,q}(\Pi_{1,i})_{\ZZ} \ar@{_{(}->}[d]
\\
E_r^{p,q}(\Pi_{3,i})_{\ZZ} \ar@{^{(}->}[r] &
E_r^{p,q}(\Pi_{2,i})_{\ZZ} \ar@{^{(}->}[r] &
E_r^{p,q}(\Pi_{1,i})_{\ZZ},
}
\end{equation}
and the exponent of each vertical map $E_{r+1}^{p,q}(\Pi_{j,i})_{\ZZ} \to E_{r}^{p,q}(\Pi_{j,i})_{\ZZ}$
is bounded above by $\exp\Tor E_r^{p+r,q-r+1}(\Pi_{j,i})$.
Applying recursively Lemma \ref{lemma:torsion-does-not-increase} we have
$$\exp\Tor E_r^{p+r,q-r+1}(\Pi_{j,i})\leq \exp\Tor E_2^{p+r,q-r+1}(\Pi_{j,i}).$$
By Lemma \ref{lemma:exponent-i-torsio-Pi-2-i} we have
$$\exp\Tor E_2^{p+r,q-r+1}(\Pi_{j,i})\leq C_XC_{\epsilon}^{q-r+1}$$
if $j=2,3$ (this only makes sense if $q-r+1\geq 0$, but otherwise $E_2^{p+r,q-r+1}(\Pi_{j,i})=0$,
so $E_{r+1}^{p,q}(\Pi_{j,i})_{\ZZ}=E_{r}^{p,q}(\Pi_{j,i})_{\ZZ}$ in this case).
Similar arguments prove that
$$\exp\Tor E_2^{p+r,q-r+1}(\Pi_{1,i})\leq C_X.$$
Stacking the diagrams (\ref{eq:bricks}) for $r=2,\dots,k-1$ (from bottom to top) we obtain the diagram
(\ref{eq:inclusions-pagines}), and the bound for $\exp \iota_k^{p,q}(\Pi_{j,i})$
follows from the previous bounds and (2) in Lemma \ref{lemma:basic-properties-exp}.
\end{proof}

\begin{theorem}
\label{thm:degeneracio-ssSerre}
The cohomology Serre spectral sequence for the fibration $\Pi_0$ over the rationals
degenerates at the second page.
\end{theorem}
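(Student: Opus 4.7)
The plan is to implement, by induction on $r\ge 2$, the divisibility argument sketched in Subsection~\ref{ss:sketch-argument}: for every $p,q$, one shows that $d_r^{p,q}(\Pi_0)_{\QQ}=0$. All ingredients are in place. The map $Z_i^*$ identifies the spectral sequence of $\Pi_0$ with that of $\Pi_{1,i}$; Lemma~\ref{lemma:exponent-i-torsio-Pi-2-i} bounds the cokernel of $W_i^*$ on the $E_2$-page by the $i$-independent constant $C_\epsilon^q$; Lemma~\ref{lemma:potencia-de-n-i} pins the image of $U_i^*$ on the $E_2$-page to $n_i^p E_2^{p,q}(\Pi_{2,i})_{\ZZ}$; and $n_i\to\infty$. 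Since $H^*(X_K)$ is finitely generated by Lemma~\ref{lemma:finite-generated-cohomology-over-integers}, each $E_2^{p,q}(\Pi_0)_{\ZZ}$ is a finitely generated free abelian group, so an element divisible by $n_i^s$ for every $i$ (with $s\ge 1$ fixed) must vanish.

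For the base case $r=2$ I would fix $x\in E_2^{p,q}(\Pi_0)_{\ZZ}$ and set $y=d_2^{p,q}(\Pi_0)(x)$. Choose successive lifts $\tilde x_i\in E_2^{p,q}(\Pi_{2,i})_{\ZZ}$ with $W_i^*(\tilde x_i)=C_\epsilon^q Z_i^*(x)$ and $\hat x_i\in E_2^{p,q}(\Pi_{3,i})_{\ZZ}$ with $U_i^*(\hat x_i)=n_i^p\tilde x_i$. Naturality gives $U_i^*(d_2(\hat x_i))=n_i^p d_2(\tilde x_i)$, while Lemma~\ref{lemma:potencia-de-n-i} applied at position $(p+2,q-1)$ forces this element into $n_i^{p+2}E_2^{p+2,q-1}(\Pi_{2,i})_{\ZZ}$; freeness of the target then yields $d_2(\tilde x_i)\in n_i^2E_2^{p+2,q-1}(\Pi_{2,i})_{\ZZ}$. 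Pushing forward through $W_i^*$ and transporting back via $Z_i^*$ produces $C_\epsilon^q y\in n_i^2 E_2^{p+2,q-1}(\Pi_0)_{\ZZ}$ for every $i$, which forces $y=0$.

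For the inductive step $r=k>2$ I would first invoke Lemma~\ref{lemma:the-other-sequences-also-degenerate} to extend the vanishing of $d_r^{p,q}(\Pi_0)_{\QQ}$ for $r<k$ to the corresponding differentials of $\Pi_{1,i},\Pi_{2,i},\Pi_{3,i}$, and then Lemma~\ref{lemma:inclusions-pagines} to embed $E_k^{p,q}(\Pi_{j,i})_{\ZZ}\hookrightarrow E_2^{p,q}(\Pi_{j,i})_{\ZZ}$ via an inclusion $\iota_k$ of uniform exponent $E:=C_X^{k-2}C_\epsilon^{q(q-1)/2}$. Applying Lemma~\ref{lemma:exponent-squares}(3) to the squares obtained by stacking the two pages shows that the page-$k$ cokernels of $W_i^*$ and $U_i^*$ have exponent at most $C_\epsilon^q E$ and $n_i^p E$ respectively. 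Rerunning the base-case lifting argument at page $k$, and passing through $\iota_k$ into $E_2^{p+k,q-k+1}$ to apply Lemma~\ref{lemma:potencia-de-n-i} at the target position, the only new price is an extra uniform factor $E^2$. The conclusion will be $C_\epsilon^q E^2 \iota_k(y)\in n_i^k E_2^{p+k,q-k+1}(\Pi_0)_{\ZZ}$ for every $i$, whence $\iota_k(y)=0$ and $y=0$ in $E_k^{p+k,q-k+1}(\Pi_0)_{\ZZ}$, proving $d_k^{p,q}(\Pi_0)_{\QQ}=0$.

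The delicate point, and the main thing to get right, is the bookkeeping of exponents and divisibility factors: each time one lifts through a non-surjective map, or passes between pages via $\iota_k$, an extra factor appears, and the argument succeeds only if every such factor is either the controlled $n_i^p$ from Lemma~\ref{lemma:potencia-de-n-i} or one of the fixed constants $C_\epsilon^q$ and $E$. Only after the $n_i^p$ cancels against $n_i^{p+k}$, leaving the surviving divisibility by $n_i^k$ that is uniform in $i$, does the finite generation of $E_2^{p+k,q-k+1}(\Pi_0)_{\ZZ}$ force $y$ to vanish.
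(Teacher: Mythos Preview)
Your proposal is correct and follows essentially the same approach as the paper: induction on the page index, combining Lemmas~\ref{lemma:potencia-de-n-i}, \ref{lemma:exponent-i-torsio-Pi-2-i}, \ref{lemma:inclusions-pagines}, and \ref{lemma:exponent-squares} to force each $d_k^{p,q}(\Pi_0)_{\ZZ}$ to land in subgroups divisible by $\Lambda_i\to\infty$ in a fixed free finitely generated group. The minor organizational differences (the paper phrases the base case by contradiction, and in the inductive step applies Lemma~\ref{lemma:exponent-squares}(1) directly at page $k$ rather than passing through $\iota_k$ to page~2 to invoke Lemma~\ref{lemma:potencia-de-n-i} there) do not affect the substance; just note that you implicitly need $\iota_k$ for $\Pi_0$ as well, which follows from the naturality in Lemma~\ref{lemma:cohomology-d-QQ-zero} since $Z_i^*$ is an isomorphism.
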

\begin{proof}
We want to prove that $d^{p,q}_k(\Pi_0;\QQ)=0$ for every $k\geq 2$ and any $p,q$.
By Lemma \ref{lemma:tensoring-Q-spectral-sequence}, this is equivalent to the vanishing of
$d^{p,q}_k(\Pi_0)_{\QQ}$ for every $k\geq 2$ and any $p,q$.
We are going to prove this using induction on $k$.

Suppose that, for some $p,q$, $d_2^{p,q}(\Pi_0)_{\QQ}\neq 0$. Then
$d_2^{p,q}(\Pi_0)_{\ZZ}\neq 0$ as well. So there is some $a\in E^{p,q}_2(\Pi_0)_{\ZZ}$
such that $d_2^{p,q}(a)\neq 0$. Let $\Lambda$ be the biggest integer such that
$d_2^{p,q}(a)\in\Lambda E^{p+2,q-1}_2(\Pi_0)_{\ZZ}$. Choose $i$ big enough so that
$$\frac{n_i^2}{C_{\epsilon}^q}>\Lambda.$$
The following commutative diagram is obtained applying the functor $(\cdot)\mapsto (\cdot)_{\ZZ}$
to the diagram (\ref{eq:Serre-SS-fibracions}) for $k=2$:
\begin{equation}
\label{eq:Serre-SS-fibracions-pagina-2}
\xymatrix{E_2^{p,q}(\Pi_{3,i})_{\ZZ} \ar@{^{(}->}[r]^{U_i^*}\ar[d]^{d_2^{p,q}(\Pi_{3,i})_{\ZZ}} &
E_2^{p,q}(\Pi_{2,i})_{\ZZ} \ar@{^{(}->}[r]^{W_i^*}\ar[d]^{d_2^{p,q}(\Pi_{2,i})_{\ZZ}} &
E_2^{p,q}(\Pi_{1,i})_{\ZZ} \ar[d]^{d_2^{p,q}(\Pi_{1,i})_{\ZZ}}&
E_2^{p,q}(\Pi_0)_{\ZZ} \ar[l]_-{Z_i^*} \ar[d]^{d_2^{p,q}(\Pi_0)_{\ZZ}} \\
E_2^{p+2,q-1}(\Pi_{3,i})_{\ZZ} \ar@{^{(}->}[r]^{U_i^*} &
E_2^{p+2,q-1}(\Pi_{2,i})_{\ZZ} \ar@{^{(}->}[r]^{W_i^*} &
E_2^{p+2,q-1}(\Pi_{1,i})_{\ZZ} &
E_2^{p+2,q-1}(\Pi_0)_{\ZZ}. \ar[l]_-{Z_i^*} }
\end{equation}
Let $a_i'=Z_i^*(a)\in E^{p,q}_2(\Pi_{1,i})_{\ZZ}$.
Since the maps $Z_i^*$ are isomorphisms, it follows that for every $\Lambda'>\Lambda$ we have
\begin{equation}
\label{eq:d-no-pot-ser-massa-gran}
d_2^{p,q}(\Pi_{1,i})_{\ZZ}(a_i')\notin \Lambda' E^{p+2,q-1}_2(\Pi_{1,i})_{\ZZ}.
\end{equation}

We claim that $d_2^{p,q}(\Pi_{2,i})_{\ZZ}(E_2^{p,q}(\Pi_{2,i})_{\ZZ})\leq n_i^2 E_2^{p+2,q-1}(\Pi_{2,i})_{\ZZ}$.
Indeed, if $b\in E_2^{p,q}(\Pi_{2,i})_{\ZZ}$, then by Lemma \ref{lemma:potencia-de-n-i}
$n_i^pb$ is equal to $U_i^*(c)$ for some $c\in E_2^{p,q}(\Pi_{3,i})_{\ZZ}$. Then, by the commutativity
of the first square in the diagram above, and using again Lemma \ref{lemma:potencia-de-n-i},
we have
$$d_2^{p,q}(\Pi_{2,i})_{\ZZ}(n_i^pb)=d_2^{p,q}(\Pi_{2,i})_{\ZZ}(U_i^*c)=U_i^*d_2^{p,q}(\Pi_{3,i})_{\ZZ}(c)\in n_i^{p+2}E_2^{p+2,q-1}(\Pi_{2,i})_{\ZZ},$$
which implies the claim.

Combining (\ref{eq:exp-W-i}) with (2) in Lemma \ref{lemma:exponent-squares}, it follows that
\begin{equation}
\label{eq:d-massa-gran}
d_2^{p,q}(\Pi_{1,i})_{\ZZ}(E_2^{p,q}(\Pi_{1,i})_{\ZZ})\leq \frac{n_i^2}{\GCD(n_i^2,C_{\epsilon}^q)}E_2^{p+2,q-1}(\Pi_{1,i})_{\ZZ}.
\end{equation}
Since, by our choice of $i$,
$$\frac{n_i^2}{\GCD(n_i^2,C_{\epsilon}^q)}\geq \frac{n_i^2}{C_{\epsilon}^q}>\Lambda,$$
(\ref{eq:d-massa-gran}) contradicts (\ref{eq:d-no-pot-ser-massa-gran}). We have thus proved that $d_2^{p,q}(\Pi_0)_{\QQ}=0$ for every
$p,q$.

Now, for the inductive step, suppose that $k>2$ and that, for every $r$ satisfying $2\leq r<k$,
$d_r^{p,q}(\Pi_0)_{\QQ}=0$ for every $p,q$. Arguing as in the case $k=2$,
in order to prove that $d_k^{p,q}(\Pi_0)_{\QQ}=0$, it suffices to prove, for any choice of $p,q$,
the existence of a sequence of integers $\Lambda_i$ satisfying $\Lambda_i\to\infty$ such that
$$d_k^{p,q}(\Pi_{1,i})_{\ZZ}(E^{p,q}_k(\Pi_{1,i})_{\ZZ})\leq \Lambda_i E^{p+k,q-k+1}_k(\Pi_{1,i})_{\ZZ}.$$

Applying (3) in Lemma \ref{lemma:exponent-squares} to the right square in the diagram in the statement
of Lemma \ref{lemma:inclusions-pagines}, together with the bound on $\exp \iota^{p,q}_k(\Pi_{2,i})$ and
the bound (\ref{eq:exp-W-i}) in Lemma \ref{lemma:exponent-i-torsio-Pi-2-i}, we conclude that
\begin{equation}
\label{eq:cota-W-i-pagina-k}
\exp(W_i^*:E^{p,q}_k(\Pi_{2,i})_{\ZZ} \to E^{p,q}_k(\Pi_{1,i})_{\ZZ})\leq C_{\epsilon}^qC_X^{k-2}C^{q(q-1)/2}.
\end{equation}
Similarly, applying (1) in Lemma \ref{lemma:exponent-squares} to the left square we conclude:
\begin{equation}
\label{eq:forquilla-U-i}
\exp(\iota_k^{p,q}(\Pi_{3,i})_{\ZZ})n_i^pE_k^{p,q}(\Pi_{2,i})\leq U_i^*(E_k^{p,q}(\Pi_{3,i})_{\ZZ})\leq
\frac{n_i^p}{\GCD(n_i^p,\exp(\iota_k^{p,q}(\Pi_{2,i})_{\ZZ}))}E_k^{p,q}(\Pi_{2,i})_{\ZZ}.
\end{equation}
To conclude the proof we will apply the previous inequalities to the following commutative diagram:
\begin{equation}
\label{eq:Serre-SS-fibracions-pagina-k}
\xymatrix{E_k^{p,q}(\Pi_{3,i})_{\ZZ} \ar@{^{(}->}[r]^{U_i^*}\ar[d]^{d_k^{p,q}(\Pi_{3,i})_{\ZZ}} &
E_k^{p,q}(\Pi_{2,i})_{\ZZ} \ar@{^{(}->}[r]^{W_i^*}\ar[d]^{d_k^{p,q}(\Pi_{2,i})_{\ZZ}} &
E_k^{p,q}(\Pi_{1,i})_{\ZZ} \ar[d]^{d_k^{p,q}(\Pi_{1,i})_{\ZZ}}
\\
E_k^{p+k,q-k+1}(\Pi_{3,i})_{\ZZ} \ar@{^{(}->}[r]^{U_i^*} &
E_k^{p+k,q-k+1}(\Pi_{2,i})_{\ZZ} \ar@{^{(}->}[r]^{W_i^*} &
E_k^{p+k,q-k+1}(\Pi_{1,i})_{\ZZ}
}
\end{equation}
Applying both inequalities in (\ref{eq:forquilla-U-i}) to the left square in the diagram, and arguing as
in the case $k=2$, we conclude that for every $b\in E_k^{p,q}(\Pi_{2,i})_{\ZZ}$
we have:
$$d_k^{p,q}(\Pi_{2,i})_{\ZZ}(\exp(\iota_k^{p,q}(\Pi_{3,i})_{\ZZ})n_i^pb)\in
\frac{n_i^{p+k}}{\GCD(n_i^{p+k},\exp(\iota_k^{p+k,q-k+1}(\Pi_{2,i})_{\ZZ}))}E^{p+k,q-k+1}_k(\Pi_{2,i})_{\ZZ}.$$
Using the claim at the beginning of the proof of Lemma \ref{lemma:exponent-squares}, the previous
formula implies that, defining
$$\lambda_i:=\frac{n_i^{p+k}}{\GCD(n_i^{p+k},\exp(\iota_k^{p+k,q-k+1}(\Pi_{2,i})_{\ZZ}))}
\qquad
\text{and}
\qquad
\mu_i:=\frac{\lambda_i}{\GCD(\lambda_i,\exp(\iota_k^{p,q}(\Pi_{3,i})_{\ZZ})n_i^p)},$$
we have
$$d_k^{p,q}(\Pi_{2,i})_{\ZZ}(E_k^{p,q}(\Pi_{2,i})_{\ZZ})\leq \mu_iE_k^{p+k,q-k+1}(\Pi_{2,i})_{\ZZ}.$$
Finally, applying (2) in Lemma \ref{lemma:exponent-squares} to the right square in the diagram above,
using the previous inclusion, we conclude that,
setting
$$\Lambda_i:=\frac{\mu_i}{\GCD(\mu_i,\exp(W_i^*:E^{p,q}_k(\Pi_{2,i})_{\ZZ} \to E^{p,q}_k(\Pi_{1,i})_{\ZZ}))},$$
we have
$$d_k^{p,q}(\Pi_{1,i})_{\ZZ}(E_k^{p,q}(\Pi_{1,i})_{\ZZ})\leq \Lambda_i E_k^{p+k,q-k+1}(\Pi_{1,i})_{\ZZ}.$$
It only remains to bound $\Lambda_i$. In the following inequalities we denote by $\const(q,k)$ a constant,
varying from place to place, depending only on $X,q,k,C,C_X,C_{\epsilon}$, but {\it independent of $i$}:
\begin{align*}
\Lambda_i &\geq \frac{\mu_i}{\exp(W_i^*:E^{p,q}_k(\Pi_{2,i})_{\ZZ} \to E^{p,q}_k(\Pi_{1,i})_{\ZZ})}
\geq \frac{\mu_i}{\const(q,k)} \qquad\text{by (\ref{eq:cota-W-i-pagina-k})} \\
&\geq \frac{\lambda_i}{\exp(\iota_k^{p,q}(\Pi_{3,i})_{\ZZ})n_i^p \const(q,k)} \\
&\geq \frac{n_i^{p+k}}{\exp(\iota_k^{p+k,q-k+1}(\Pi_{2,i})_{\ZZ})\exp(\iota_k^{p,q}(\Pi_{3,i})_{\ZZ})n_i^p \const(q,k)}
\geq \frac{n_i^{k}}{\const(q,k)}\to\infty
\end{align*}
as $i\to\infty$, where in the last line we have used (\ref{eq:cota-superior-exponent-iota}).
With this estimate, the proof of Theorem \ref{thm:degeneracio-ssSerre} is complete.
\end{proof}

\subsection{Conclusion of the proof of Theorem \ref{thm:main-rotation-morphism}}
\label{ss:conclusion-thm:main-rotation-morphism}
We have now proved that $K$ satisfies each of the properties from (P1) to (P4), and hence
that $K$ belongs to $\kK(X)$.
Let us recall, for the reader's benefit, where each of the properties
has been established.
(Note that the definition of $\kK(X)$ was given at the beginning of Section \ref{s:proof-thm:main-rotation-morphism}.)
Property (P1), the fact that $H^*(X_K)$ is finitely generated as a
$\ZZ$-module, has been proved in Lemma \ref{lemma:finite-generated-cohomology-over-integers}.
Property (P2), the existence of a subgroup $K_0^*\leq K^*$ of finite index whose induced action
on $H^*(X_K)$ is trivial, was proved in Subsection \ref{ss:def-K-0}, right after the statement of Theorem \ref{thm:trivial-module-structure}.
Property (P3), the degeneracy over the rationals of the Serre spectral of the fibration $X_K\times_{K_0^*}V_K\to V_K/K_0^*$, was proved in Theorem \ref{thm:degeneracio-ssSerre}.
Finally, property (P4), the injectivity of $\psi_K^*:H^*(T_K)\to H^*(X)$, follows, by
the discussion in Subsection \ref{ss:preparing-ss}, from (P3).

By assumption, $K$ also belongs to the set $\lL$ defined in (\ref{eq:def-kK}),
so $|\rho_i(\Gamma_i)\cap\Ker r_K|$ is bounded as $i\to\infty$.
Hence,
$$C(G_i,X)\leq |\Ker (r_K:\rho_i(G_i)\to T_K)|$$
is bounded as $i\to\infty$. We have thus obtained a
contradiction with (\ref{eq:C-G-i-X-diverges}), so the proof of
Theorem \ref{thm:main-rotation-morphism} is now complete.

\section{Proof of Theorem \ref{thm:trivial-module-structure}}
\label{s:proof-thm:trivial-module-structure}

The main ingredient in the proof of Theorem \ref{thm:trivial-module-structure} will be the following.

\begin{theorem}
\label{thm:roots-of-integer-matrices}
Let $A\in \GL(m,\ZZ)$, and suppose that there exists a sequence of integers $r_i\to\infty$
and matrices $B_i\in\GL(m,\ZZ)$ satisfying $B_i^{r_i}=A$. Then there exists a nonzero integer
$e$ such that $A^e=\Id$.
\end{theorem}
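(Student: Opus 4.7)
The plan is to show that every eigenvalue of $A$ and of each $B_i$ is a root of unity of degree at most $m$, then reduce to the case $A$ unipotent and use uniqueness of unipotent roots to force $A^N=\Id$ for an $N$ depending only on $m$. First I would bound the eigenvalues $\mu_{1,i},\dots,\mu_{m,i}$ of $B_i$: they are algebraic integers of degree $\leq m$ (roots of $\chi_{B_i}\in\ZZ[x]$), and since $B_i^{r_i}=A$, each $\mu_{k,i}^{r_i}$ equals some eigenvalue of $A$. Writing $\ell:=\min_\lambda|\lambda|>0$ and $M:=\max_\lambda|\lambda|$ over the eigenvalues $\lambda$ of $A$, we get $\ell^{1/r_i}\leq|\mu_{k,i}|\leq M^{1/r_i}$, and the same bounds apply to all Galois conjugates of $\mu_{k,i}$, which are themselves roots of $\chi_{B_i}$. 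Hence the integer coefficients of $\chi_{B_i}$ are uniformly bounded in $i$, leaving only finitely many possibilities for the multiset $\{\mu_{k,i}\}_k$. Passing to a subsequence that makes $\chi_{B_i}$ independent of $i$, with roots labeled $\mu_1,\dots,\mu_m$, each $\mu_k^{r_i}$ lies in the finite set of eigenvalues of $A$; since the $r_i$ are unbounded, pigeonhole yields $i\neq j$ with $r_i\neq r_j$ and $\mu_k^{r_i}=\mu_k^{r_j}$, so each $\mu_k$ is a root of unity, and hence so is every eigenvalue of $A$.

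Next I would invoke the classical bound that a root of unity of degree $\leq m$ over $\QQ$ has order at most $N_1(m):=\max\{n:\varphi(n)\leq m\}$. Setting $N:=\lcm(1,\dots,N_1(m))$, the semisimple part $B_{i,s}$ of the Jordan decomposition $B_i=B_{i,s}B_{i,u}$ satisfies $B_{i,s}^N=\Id$, so $C_i:=B_i^N\in\GL(m,\ZZ)$ is unipotent. Likewise $U:=A^N$ is unipotent, and $C_i^{r_i}=U$. It then suffices to show $U=\Id$. Since $U$ is unipotent, the truncated series $\log U=\sum_{k=1}^{m-1}(-1)^{k+1}(U-\Id)^k/k$ defines a nilpotent matrix with rational entries, and the unique unipotent $r_i$-th root of $U$ in $\GL(m,\CC)$ is $\exp((\log U)/r_i)$. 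Hence $C_i-\Id=\sum_{k=1}^{m-1}(\log U)^k/(k!\,r_i^k)$ has entries of size $O(1/r_i)$; being integers, they must vanish for all sufficiently large $i$, so $C_i=\Id$ and $U=C_i^{r_i}=\Id$.

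The main obstacle is the first stage: translating the algebraic constraint $B_i^{r_i}=A$ with $B_i\in\GL(m,\ZZ)$ into the finiteness statement that the eigenvalues of $A$ are roots of unity. Once that is in hand, the uniform order bound on root-of-unity eigenvalues and the identity-forcing behavior of $\exp((\log U)/r_i)$ as $r_i\to\infty$ make the remaining reduction essentially formal.
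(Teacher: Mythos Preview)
Your proof is correct. The first stage—bounding the coefficients of $\chi_{B_i}$, extracting a subsequence with constant characteristic polynomial, and using pigeonhole on the finitely many eigenvalues of $A$ to force the $\mu_k$ (and hence the eigenvalues of $A$) to be roots of unity—is essentially the paper's argument.

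The second stage genuinely differs. After reducing to the unipotent situation $C_i^{r_i}=U$ with $C_i,U\in\GL(m,\ZZ)$ unipotent, the paper expands $U-\Id=\sum_{j=1}^{m-1}\binom{r_i}{j}(C_i-\Id)^j$, proves by descending induction that each $(C_i-\Id)^g$ is a rational linear combination of $(U-\Id)^g,\dots,(U-\Id)^{m-1}$, deduces $\Ker(U-\Id)^j=\Ker(C_i-\Id)^j$ for all $j$, and then picks an integral vector $v\in\Ker(U-\Id)^2\setminus\Ker(U-\Id)$ to obtain $(U-\Id)v=r_i(C_i-\Id)v\in\ZZ^m$ for all $i$, forcing $(U-\Id)v=0$, a contradiction. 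Your route is cleaner: the uniqueness of the unipotent $r_i$-th root in $\GL(m,\CC)$ gives the closed formula $C_i=\exp\bigl(r_i^{-1}\log U\bigr)$, whence $C_i-\Id$ has entries of size $O(1/r_i)$ and integrality kills them for large $i$. This buys you a shorter argument and, as a bonus, the explicit bound $A^N=\Id$ with $N$ depending only on $m$; the paper's $e$ is not a priori uniform in $A$. (Incidentally, you can skip the Jordan decomposition: once all eigenvalues of $B_i$ are roots of unity of degree $\le m$, the eigenvalues of $B_i^N$ are all $1$, so $B_i^N$ is unipotent directly by Cayley--Hamilton.)
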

\begin{proof}
Let $\lambda_1,\dots,\lambda_m\in\CC$
(resp. $\lambda_{i,1},\dots,\lambda_{i,m}\in\CC$) be the eigenvalues
of $A$ (resp. $B_i$) repeated with multiplicity. Up to reordering
the numbers $\lambda_{i,1},\dots,\lambda_{i,m}$ we may assume that
$\lambda_{i,j}^{r_i}=\lambda_j$ for every $i,j$. Hence, for every $j$,
$|\lambda_{i,j}|=|\lambda_j|^{1/r_i}\to 1$ as $i\to\infty$, so
the numbers $\lambda_{i,j}$ stay in a bounded subset of $\CC$.
It follows that the coefficients of the polynomial
$P_i(x)=\prod_j(\lambda_{i,j}-x)$ stay in a bounded set as $i\to\infty$. But
$P_i(x)$ is the characteristic polynomial of $B_i$, and hence its coefficients
are integers. So there is a finite set of polynomials in $\ZZ[x]$ which contains
$P_i(x)$ for every $i$. It follows that there exists a polynomial $P(x)\in\ZZ[x]$
and sequence of integers $i_k\to\infty$ such that $P_{i_k}(x)=P(x)$ for every $k$.
So if we denote by $\zeta_1,\dots,\zeta_m\in\CC$ the roots of $P(x)$ repeated
with multiplicity, for each $k$ there is a permutation $\sigma_k$ of the set
$\{1,\dots,m\}$ such that $\lambda_{i_k,j}=\zeta_{\sigma_k(j)}$. We may choose
$k\neq k'$ such that $\sigma_k=\sigma_{k'}$, which implies that
$\lambda_{i_k,j}=\lambda_{i_{k'},j}$ for every $j$. It then follows that, for every $j$,
$$\lambda_j^{r_{i_k}}=\lambda_{i_{k'},j}^{r_{i_{k'}}r_{i_k}}=
\lambda_{i_k,j}^{r_{i_{k'}}r_{i_k}}=\lambda_j^{r_{i_{k'}}},$$
so setting $e_0=r_{i_{k'}}-r_{i_k}$ we have $\lambda_j^{e_0}=1$, and hence
$\lambda_j$ is a root of unity.
It follows that all roots of $P$ are roots of unity as well, and consequently there exists
some integer $e$ such that $\lambda_{i_k,j}^e=1$ for every $k$ and $j$. Then $B_{i_k}^e$
is unipotent, and so is $A^e=(B_{i_k}^e)^{r_{i_k}}$.

Let $C=A^e-\Id$ and $D_k=B_{i_k}^e-\Id$.
If $C=0$ then $A^e=\Id$ and we are done. So suppose from now on that $C\neq 0$.
We will see that this leads to a contradiction.

To simplify the notation we define $s_k=r_{i_k}$,
so that $s_k\to\infty$ and $\Id+C=(\Id+D_k)^{s_k}$. The matrices $C$ and $D_k$
are nilpotent of size $m\times m$, so they satisfy $C^m=D_k^m=0$.
Hence,
\begin{equation}
\label{eq:C-funcio-D}
C=\left(s_k\atop 1\right)D_k+\left(s_k\atop 2\right)D_k^2+\dots+\left(s_k\atop m-1\right)D_k^{m-1}.
\end{equation}
We claim that for every $1\leq g\leq m-1$ the matrix
$D_k^g$ is a rational linear combination of $C^g,C^{g+1},\dots,C^{m-1}$. To prove the claim
we use descending induction on $g$, beginning with $g=m-1$. Raising both sides of the equality
(\ref{eq:C-funcio-D}) to $m-1$ and using $C^m=D^m_k=0$ we conclude:
$$C^{m-1}=\left(s_k\atop 1\right)^{m-1}D_k^{m-1},$$
which proves the case $g=m-1$ of the claim. Now suppose that $g<m-1$, and that the claim
is true for bigger values of $g$. Raising both sides of the equality
(\ref{eq:C-funcio-D}) to $g$, and using the induction hypothesis as well as $C^{m-1}=0$ we obtain:
\begin{align*}
C^g &=\left(\left(s_k\atop 1\right)D_k+\left(s_k\atop 2\right)D_k^2+\dots+\left(s_k\atop m-1\right)D_k^{m-1}\right)^g \\
&=\left(s_k\atop 1\right)^gD_k^g+(\text{rational linear combination of $D_k^{g+1},\dots,D_k^{m-1}$}) \\
&=\left(s_k\atop 1\right)^gD_k^g+(\text{rational linear combination of $C_k^{g+1},\dots,C_k^{m-1}$}).
\end{align*}
Rearranging terms this implies the claim for the present value of $g$, concluding the proof of the induction step.
We have thus proved (taking $g=1$) the existence for every $k$ of rational numbers $\xi_{k,1},\dots,\xi_{k,m-1}$ such that
\begin{equation}
\label{eq:D-funcio-C}
D_k=\xi_{k,1}C+\xi_{k,2}C^2+\dots+\xi_{k,m-1}C^{m-1}.
\end{equation}
Now define the vector spaces
$$V_i=\Ker(C^i:\QQ^m\to\QQ^m),\qquad W_{k,i}=\Ker(D_k^i:\QQ^m\to\QQ^m).$$
For every $i$ and $k$,
raising both sides of the equality (\ref{eq:C-funcio-D}) (resp. (\ref{eq:D-funcio-C})) to $i$
we obtain the inclusion $V_i\subseteq W_{k,i}$ (resp. $W_{k,i}\subseteq V_i$).
Hence $V_i=W_{k,i}$ for every $i$ and $k$.

There are inclusions
$$V_1\subseteq V_2\subseteq\dots\subseteq V_m=\QQ^m,
\qquad
W_{k,1}\subseteq W_{k,2}\subseteq \dots\subseteq W_{k,m}=\QQ^m$$
for every $k$. If we had $V_1=V_2$ then we would have $V_1=V_i$ for every $i$ (use ascending induction on $i$ and
the equality $V_i=\{v\in\QQ^m\mid Cv\in V_{i-1}\}$), which, taking $i=m$,
would imply that $V_1=\QQ^m$, and hence $C=0$, contradicting our assumption. Hence the inclusion
$V_1\subset V_2$ is strict.

Now let $v$ be any element in $V_2\setminus V_1$. Multiplying $v$ by an integer we may assume
that $v\in\ZZ^m\setminus\{0\}$. Then $Cv\in\ZZ^m$. We have $D_k^2v=0$ for every $k$ because $v\in V_2=W_{k,2}$, and hence
(\ref{eq:C-funcio-D}) implies that $Cv=s_kD_kv$ for every $k$. Since $D_kv\in\ZZ^m$ and $s_k\in\ZZ$ for every $k$,
and $s_k\to\infty$, we necessarily
have $Cv=0$. But then $v\in V_1$, contradicting our choice of $v$. We have thus reached a contradiction
assuming that $C\neq 0$, and consequently the proof of the theorem is now complete.
\end{proof}

We are now ready to prove Theorem \ref{thm:trivial-module-structure}.
Let $M$ be a finitely generated $\ZZ$-module and let $z:M\to M$ be an automorphism.
Suppose that there is a sequence of integers $r_i\to\infty$ and automorphisms $w_i:M\to M$
satisfying $w_i^{r_i}=z$. We want to prove that $z$ has finite order in $\Aut M$.

Since $M$ is finitely generated, its torsion $\Tor M$ is finite. Like any automorphism
of $M$, $z$ preserves $\Tor M$, so it induces an automorphism $\ov{z}$ of $M/\Tor M$.
Picking an isomorphism $\phi:M/\Tor M\stackrel{\simeq}{\longrightarrow}\ZZ^m$
we may identify $\ov{z}$ with a matrix
in $\GL(m,\ZZ)$. If $w$ is any automorphism of $M$ satisfying $w^r=z$ and we denote
by $\ov{w}$ the automorphism of $M/\Tor M$ induced by $w$, then we have $\ov{w}^r=\ov{z}$.
So if we denote by $B\in\GL(m,\ZZ)$ the matrix corresponding to $\ov{w}$ via $\phi$
then we have $B^r=A$. Consequently, by Theorem \ref{thm:roots-of-integer-matrices} the assumption
of the corollary implies the existence of a nonzero integer $e$
such that $\ov{z}^e$ is the identity on $M/\Tor M$.
Since $\Tor M$ is finite, replacing $e$ by some of its nonzero multiples
we may assume that $z^e$ restricts to the identity on $\Tor M$ and that it induces the
identity on $M/\Tor M$. Hence, there is a morphism $\psi:M\to \Tor M$ such that
for every $x\in M$ we have $z^e(x)=x+\psi(x)$. Since $z^e$ acts trivially on $\Tor M$
we have $z^e(\psi(x))=\psi(x)$, so
for any integer $h$ we have $z^{he}(x)=x+h\psi(x)$. Consequently, if $h$ denotes the cardinal of $\Tor M$,
$z^{he}$ is the identity on $M$.

\section{Proof of Theorem \ref{thm:fixed-points-kernel-rotation-morphism}}

\subsection{Passing to integral cohomology}
\label{s:integral-cohomology}
Let $X$ be an $n$-dimensional WLS manifold, and let $\Omega\in H^2(X;\RR)$ be a WLS class.
The aim of this subsection is to prove that there exists an
{\it integral} WLS class on $X$.
Given a commutative ring $A$ with unit and an integer $d$,
we denote
$$V_{A,d}:=\bigoplus_{j\geq 0}\Lambda^{d-2j}H^1(X;A).$$
Let $c:V_{A,d}\to H^*(X;A)$ denote the cup product map.
For any $\omega\in H^2(X;A)$, let
$$c_{\omega,A,d}:V_{A,d}\to H^d(X;A)$$
be the linear map defined by the condition that $c_{\omega,A,d}(\tau)=c(\tau)\smile\omega^j$
for each $\tau\in \Lambda^{d-2j}H^1(X;A)$.
Define
$$\uU_d=\{\omega\in H^2(X;\RR)\mid c_{\omega,\RR,d}\text{ is surjective}\}.$$
We have $\Omega\in\uU_{n-1}\cap\uU_n$, so $\uU_{n-1}\cap\uU_n$ is nonempty.

The maps $c_{\omega,\RR,d}$ depend continuously on $\omega$. Since being surjective is an open condition
on continuous families of linear maps between finite dimensional real vector spaces, the sets
$\uU_d$ are open. It follows that $\uU_{n-1}\cap\uU_n$ is a nonempty open subset of $H^2(X;\RR)$, so
$\vV:=\uU_{n-1}\cap\uU_n\cap H^2(X;\QQ)$ is nonempty. Let $\mu$ be an element of $\vV$.
Pick a nonzero integer $r$ such that $r\mu$ belongs to the image of the natural map
$H^2(X)\exh H^2(X)/\Tor H^2(X)\hookrightarrow H^2(X;\QQ)$. Clearly, $r\mu\in\vV$.

Let $\lambda\in H^2(X)$ be any lift of $r\mu$. By construction, $\lambda$ is an integral WLS class on $X$.
Define
$$\delta_d:=|\Coker (c_{\lambda,\ZZ,d}:V_{\ZZ,d}\to H^d(X)/\Tor H^d(X))|\in\NN\cup\{\infty\}.$$
Since $\lambda$ is a lift of an element of $\vV$, both $\delta_{n-1}$ and $\delta_n$ are finite natural numbers.

\subsection{Lifting finite cyclic group actions to circle bundles}

In this section we prove some results on lifting finite cyclic group actions to line bundles.
We give an ad hoc self-contained exposition, which
is roughly as long as it would be to merely set up the notation and refer to existing general results
(see e.g. \cite[\S 6]{Mu2018}).


We denote the homotopy class
of a map $h:X\to S^1$ by $[h]$, and we identify it with $h^*\theta\in H^1(X)$,
where $\theta\in H^1(S^1)$ is the generator specified in \S\ref{sss:modules-and-tori}.

Let $\pi:P\to X$ be a circle bundle. Any map $h:X\to S^1$ defines a gauge transformation $\mu_h:P\to P$
sending $p\in P$ to $p\cdot h(\pi(p))$.
Conversely, any gauge transformation $\mu:P\to P$ defines a map $h_{\mu}:X\to S^1$ by the condition
that $\mu(p)=p\cdot h_{\mu}(\pi(p))$.
We denote $[\mu]:=[h_{\mu}]\in H^1(X)$.

Let $\gamma:X\to X$ be a homeomorphism of order $r$ inducing the trivial
automorphism of $H^1(X)$ and such that $P$ and $\gamma^*P$ are isomorphic circle bundles.
For any bundle automorphism $\kappa:P\to P$ lifting $\gamma$,
the $r$-th composition $\kappa^r:P\to P$ lifts the identity on $X$, so we may define
$$c^r(\gamma,P):=[\kappa^r]\otimes 1\in H^1(X)\otimes_{\ZZ} \ZZ/r.$$
The following will be used below: for any $h:X\to S^1$ and any bundle automorphism
$\kappa:P\to P$ lifting $\gamma$ we have $\kappa^{-1}\mu_h\kappa=\mu_{h\circ \gamma}$.
Since $\gamma$ acts trivially on $H^1(X)$, we have
$[\kappa^{-1}\mu_h\kappa]=[\mu_{h\circ \gamma}]=[h\circ \gamma]=[h].$

\begin{lemma}
\label{lemma:independence-of-lift}
The element $c^r(\gamma,P)$ does not depend on the choice of $\kappa$.
\end{lemma}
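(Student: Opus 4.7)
The plan is to take two lifts $\kappa,\kappa':P\to P$ of $\gamma$ and show that $[\kappa^r]$ and $[(\kappa')^r]$ differ by an integer multiple of $r$ in $H^1(X)$, so that they become equal after tensoring with $\ZZ/r$.

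Any two lifts of $\gamma$ differ by a gauge transformation, so we may write $\kappa'=\mu_h\kappa$ for some $h:X\to S^1$. The first step is to expand $(\mu_h\kappa)^r$ and commute all the $\mu$'s past the $\kappa$'s using the identity $\kappa\mu_h\kappa^{-1}=\mu_{h\circ\gamma^{-1}}$, which follows directly from the relation $\kappa^{-1}\mu_h\kappa=\mu_{h\circ\gamma}$ recalled just before the lemma. Iterating this conjugation identity gives $\kappa^j\mu_h\kappa^{-j}=\mu_{h\circ\gamma^{-j}}$, and a straightforward induction on $r$ then yields
\[
(\mu_h\kappa)^r=\mu_{h+h\circ\gamma^{-1}+\dots+h\circ\gamma^{-(r-1)}}\,\kappa^r,
\]
where we use additive notation on $S^1$ so that composition of gauge transformations corresponds to addition of the associated maps $X\to S^1$.

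The second step is to pass to homotopy classes. Setting $H:=h+h\circ\gamma^{-1}+\dots+h\circ\gamma^{-(r-1)}$, we obtain
\[
[(\kappa')^r]=[\mu_H\kappa^r]=[H]+[\kappa^r]=\sum_{j=0}^{r-1}[h\circ\gamma^{-j}]+[\kappa^r]
\]
in $H^1(X)$. Now I invoke the hypothesis that $\gamma$ acts trivially on $H^1(X)$: since $[h\circ\gamma^{-j}]=(\gamma^{-j})^*[h]=[h]$ for every $j$, the sum telescopes to $r[h]$, giving $[(\kappa')^r]=[\kappa^r]+r[h]$.

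Finally, tensoring with $\ZZ/r$ kills the term $r[h]$, so $[(\kappa')^r]\otimes 1=[\kappa^r]\otimes 1$ in $H^1(X)\otimes_{\ZZ}\ZZ/r$, proving that $c^r(\gamma,P)$ is independent of the chosen lift. I do not anticipate any significant obstacle; the only point requiring care is bookkeeping with the direction of the shift by $\gamma$ when conjugating, which is fixed by the relation stated in the paragraph preceding the lemma.
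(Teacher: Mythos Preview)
Your proof is correct and follows essentially the same approach as the paper: write the second lift as the first times a gauge transformation, expand the $r$-th power by repeatedly conjugating $\mu_h$ past $\kappa$, use that $\gamma$ acts trivially on $H^1(X)$ to collapse the sum to $r[h]$, and tensor with $\ZZ/r$. The only cosmetic difference is that the paper writes $\kappa'=\kappa\mu_h$ rather than $\kappa'=\mu_h\kappa$, leading to conjugates $\kappa^{-j}\mu_h\kappa^{j}$ instead of $\kappa^{j}\mu_h\kappa^{-j}$; the argument is otherwise identical.
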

\begin{proof}
Suppose that $\kappa':P\to P$ is another lift of $\gamma$. We then have $\kappa'=\kappa\cdot h=\kappa\mu_h $ for some
$h:X\to S^1$. Then we compute:
\begin{align*}
[(\kappa')^r] &= [(\kappa\mu_h)^r]=[\kappa^r (\kappa^{-(r-1)}\mu_h\kappa^{r-1})\dots (\kappa^{-2}\mu_h\kappa^2)(\kappa{-1}\mu_h\kappa)\mu_h] \\
&=[\kappa^r]+[\kappa^{-(r-1)}\mu_h\kappa^{r-1}]+\dots+[\kappa^{-2}\mu_h\kappa^2]+[\kappa{-1}\mu_h\kappa]+[\mu_h] \\
&=[\kappa^r]+r[h],
\end{align*}
which proves that $[(\kappa')^r]\otimes 1=[\kappa^r]\otimes 1$ as elements of $H^1(X)\otimes_{\ZZ} \ZZ/r$.
\end{proof}

\begin{lemma}
\label{lemma:existence-lifting-line-bundle}
We have $c^r(\gamma,P)=0$ if and only if $\gamma$ can be lifted to a principal bundle map
$\kappa:P\to P$ satisfying $\kappa^r=\Id_P$.
\end{lemma}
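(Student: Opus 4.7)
The plan is to treat the two implications separately. One direction is immediate: if $\kappa:P\to P$ lifts $\gamma$ and $\kappa^r=\Id_P$, then $\kappa^r=\mu_0$, so $[\kappa^r]=0$ in $H^1(X)$ and therefore $c^r(\gamma,P)=0$.

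For the converse I will start from an arbitrary bundle lift $\kappa_0$ of $\gamma$ (which exists by the hypothesis $\gamma^*P\simeq P$) and adjust it in two stages by gauge transformations until its $r$-th power is the identity. The first stage is purely cohomological: the assumption $c^r(\gamma,P)=[\kappa_0^r]\otimes 1=0$ in $H^1(X)\otimes\ZZ/r$ means $[\kappa_0^r]=r\alpha$ for some $\alpha\in H^1(X)$. Choosing $g:X\to S^1$ with $[g]=-\alpha$ and running the computation displayed in the proof of Lemma~\ref{lemma:independence-of-lift}, the new lift $\kappa_1:=\kappa_0\mu_g$ satisfies $[\kappa_1^r]=[\kappa_0^r]+r[g]=0$. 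Hence $\kappa_1^r=\mu_h$ for some nullhomotopic map $h:X\to S^1$.

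The second stage promotes ``$\mu_h$ is nullhomotopic'' to ``$\mu_h=\Id_P$'' by a further gauge modification of $\kappa_1$. Two observations about $h$ are crucial: (i) $h$ is $\gamma$-invariant, because $\kappa_1$ commutes with $\kappa_1^r=\mu_h$, while conjugation by $\kappa_1$ sends $\mu_h$ to $\mu_{h\circ\gamma^{-1}}$; and (ii) since $[h]=0$ we may lift $h$ to $\tilde h:X\to\RR$, and then $\tilde h\circ\gamma-\tilde h$ is a continuous integer-valued function on the connected space $X$, hence a constant $m$ satisfying $rm=0$ (because $\gamma^r=\Id$), so $m=0$ and $\tilde h$ is itself $\gamma$-invariant. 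With such a $\tilde h$ at hand, I set $f:=\pi\circ(-\tilde h/r)$ and $\kappa:=\kappa_1\mu_f$. An induction on $k$ using $\mu_f\kappa_1=\kappa_1\mu_{f\circ\gamma}$ gives $(\kappa_1\mu_f)^k=\kappa_1^k\,\mu_{\sum_{i=0}^{k-1}f\circ\gamma^i}$, and the $\gamma$-invariance of $\tilde h$ collapses the sum at $k=r$ to $\pi\circ(-\tilde h)=-h$, so $\kappa^r=\mu_h\,\mu_{-h}=\Id_P$.

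The main obstacle is step (ii): converting the cohomological vanishing into a genuine equality $\kappa^r=\Id_P$ requires ``dividing $h$ by $r$'' in a way compatible with the $\gamma$-action, and this is precisely where both the connectedness of $X$ and the order relation $\gamma^r=\Id$ enter. Everything else is bookkeeping with the gauge transformation calculus already developed around Lemma~\ref{lemma:independence-of-lift}.
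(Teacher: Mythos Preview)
Your proof is correct. Both directions are handled properly, and in particular the key step --- showing that the real lift $\tilde h$ is itself $\gamma$-invariant via the telescoping sum $\sum_{i=0}^{r-1}(\tilde h\circ\gamma^{i+1}-\tilde h\circ\gamma^i)=rm=0$ --- is sound and uses exactly the connectedness of $X$ and the relation $\gamma^r=\Id$, as you note.

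The route you take differs from the paper's in an interesting way. The paper does not first arrange for $h$ to be nullhomotopic; instead it writes $\kappa^r=\mu_{rh}$ directly for some $h:X\to S^1$, observes that $rh$ is $\gamma$-invariant, and deduces that $h\circ\gamma-h$ is a constant $r$-torsion element $\xi\in S^1$. One then computes $(\kappa\mu_h^{-1})^r=-\tfrac{r(r-1)}{2}\xi$, which forces a parity split: for $r$ odd this vanishes, while for $r$ even a further correction by the constant gauge $\mu_{1/(2r)}$ may be needed. Your two-stage strategy --- first kill $[\kappa_1^r]$ so that $h$ lifts to $\RR$, then divide by $r$ in $\RR$ where there is no torsion ambiguity --- sidesteps this case analysis entirely and is arguably cleaner. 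The paper's version, on the other hand, makes the obstruction $\xi$ explicit and shows concretely how the parity of $r$ enters.
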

\begin{proof}
If $\gamma$ admits a lift $\kappa:P\to P$ of order $r$ then $[\kappa^r]=0$, so $c^r(\gamma,P)=0$.
Conversely, suppose that $c^r(\gamma,P)=0$. Then there exists some lift $\kappa:P\to P$ of $\gamma$
and some function $h:X\to S^1$ such that $\kappa^r=(\mu_h)^r=\mu_{rh}$. Denote $j=rh$. We have
$\mu_j=\kappa^r=\kappa^{-1}\kappa^r\kappa=\kappa^{-1}\mu_j\kappa=\mu_{j\circ \gamma}$,
which implies that $j=j\circ \gamma$. As a
consequence, $r(h\circ \gamma-h)=0$ in $\RR/\ZZ$, which implies that $h\circ \gamma-h=\xi$
for some $\xi\in S^1$ such that $r\xi=0$.
A computation similar to that in the proof of Lemma \ref{lemma:independence-of-lift}
gives
$$(\kappa\mu_h^{-1})^r=-\frac{(r-1)+\dots+2+1}{2}\xi=-\frac{r(r-1)}{2}\xi.$$
If $r$ is odd this implies that $(\kappa\mu_h^{-1})^r=0$, so $\kappa':=\kappa\mu_h^{-1}$ is a lift of
$\gamma$ of order $r$ and we are done. If $r$ is even
then either $(\kappa\mu_h^{-1})^r=0$ or $(\kappa\mu_h^{-1})^r=1/2$ (here we abusively denote elements in
$\RR$ and their class in $\RR/\ZZ$ with the same symbol). In the first case
we are done as before. In the second case we have
$(\kappa\mu_h^{-1}\mu_{1/(2r)})^r=0$, so we are also done.
\end{proof}

The proof of the following lemma is immediate.

\begin{lemma}
\label{lemma:lifting-c-f-P}
Let $\pi:X'\to X$ be a continuous map of topological manifolds, and let $\gamma':X'\to X'$ be a homeomorphism
of order $r$ satisfying $\pi\circ \gamma'=\gamma\circ\pi$. We have $(\gamma')^*\pi^*P\simeq \pi^*P$ and
$c^r(\gamma',\pi^*P)=\pi^*c^r(\gamma,P)$.
\end{lemma}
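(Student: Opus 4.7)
The plan is to verify both assertions by a direct naturality argument for the pullback construction, without any deep ingredient beyond the definitions.

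First I would establish the isomorphism $(\gamma')^*\pi^*P\simeq \pi^*P$. The hypothesis $\pi\circ \gamma'=\gamma\circ\pi$ gives an equality of functors on principal bundles, so
$$(\gamma')^*\pi^*P=(\pi\circ \gamma')^*P=(\gamma\circ\pi)^*P=\pi^*\gamma^*P,$$
and combining with the hypothesis $\gamma^*P\simeq P$ yields the first claim.

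For the second assertion, I would pick any lift $\kappa:P\to P$ of $\gamma$ and produce a canonical lift $\kappa':\pi^*P\to \pi^*P$ of $\gamma'$. Realizing $\pi^*P=\{(x',p)\in X'\times P\mid \pi(x')=\pi_P(p)\}$, define $\kappa'(x',p):=(\gamma'(x'),\kappa(p))$; this is well defined because $\pi(\gamma'(x'))=\gamma(\pi(x'))=\gamma(\pi_P(p))=\pi_P(\kappa(p))$, and it is a principal bundle map lifting $\gamma'$. Iterating $r$ times,
$$(\kappa')^r(x',p)=((\gamma')^r(x'),\kappa^r(p))=(x',\kappa^r(p)),$$
so $(\kappa')^r$ is a gauge transformation of $\pi^*P$, and it is precisely $\mu_{h_{\kappa^r}\circ\pi}$. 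Consequently $[(\kappa')^r]=\pi^*[\kappa^r]$ in $H^1(X')$, and reducing modulo $r$ gives $c^r(\gamma',\pi^*P)=\pi^*c^r(\gamma,P)$. By Lemma \ref{lemma:independence-of-lift} the value of $c^r$ does not depend on the particular lift chosen, so this identity is intrinsic.

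There is no serious obstacle; the only point that requires a moment of care is checking that the formula $\kappa'(x',p)=(\gamma'(x'),\kappa(p))$ actually lands in $\pi^*P$, which uses the intertwining relation $\pi\circ\gamma'=\gamma\circ\pi$ in an essential way. Everything else is a formal manipulation of the definitions of $c^r$ and of pullback bundles.
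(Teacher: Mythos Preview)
Your proof is correct and is precisely the direct naturality verification the paper has in mind; the paper itself simply declares the lemma ``immediate'' and gives no explicit argument, so your write-up fills in exactly what was left to the reader.
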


\subsection{From the cohomology of a fiber to that of a fibration}
\label{s:from-fiber-to-total-space}

Let $B$ be a closed, connected and oriented $s$-dimensional manifold.
Let $p\in B$ be any point.
Suppose given maps
$$F\stackrel{\iota}{\hookrightarrow} E\stackrel{\pi}{\longrightarrow} B,$$
where $\pi$ is a locally trivial fibration and $\iota$ is the inclusion of the fiber at $p$.
We assume $F$ to be a general topological space.

We use the notation introduced at the beginning of Section \ref{ss:ignoring-torsion}.
For every $r\geq 2$,
all differentials $d_r^{s,*}(\pi):E_r^{s,*}(\pi)\to E_r^{s+r,*-(r-1)}(\pi)$ are zero, because
the cohomology of $B$ vanishes in dimensions bigger than $s$. Hence,
$E_2^{s,*}(\pi)$ surjects onto $E_{\infty}^{s,*}(\pi)$.
Define a linear map
$$\iota_!:H^*(F)\to H^{s+*}(E)$$
as the composition of the morphisms
\begin{multline*}
H^*(F)=\hH^*(\pi)_p\exh (\hH^*(\pi)_p)_{\pi_1(B,p)}=H_0(B;\hH^*(\pi))\stackrel{\PD}{\longrightarrow} \\
\stackrel{\PD}{\longrightarrow} H^s(B;\hH^*(\pi))=E_2^{s,*}(\pi)\exh E_{\infty}^{s,*}(\pi)\hookrightarrow H^{s+*}(E),
\end{multline*}
where $(\hH^*(\pi)_p)_{\pi_1(B,p)}$ denotes the coinvariants for the action of $\pi_1(B,p)$
on $\hH^*(\pi)_p$, and $\PD$ denotes Poincar\'e duality.
We remark that if $F$ is a closed, connected and oriented manifold,
then $\iota_!$ is the usual pushforward or {\it umkehr} map,
defined as conjugation of $\iota_*:H_*(F)\to H_*(E)$ with Poincar\'e dualities on $F$ and $E$.

The naturality of the Serre spectral sequence implies the following lemma.

\begin{lemma}
\label{lemma:naturality-T}
Suppose given a commutative diagram
$$\xymatrix{F\ar@{^(->}[r]^{\iota} \ar[d]^{\psi} & E \ar[r]^{\pi}\ar[d]^{\Psi} & B \ar@{=}[d] \\
F'\ar@{^(->}[r]^{\iota'} & E' \ar[r]^{\pi'} & B ,}$$
where $\pi':E'\to B$ is a locally trivial fibration and $\iota'$ is the inclusion of the fiber at $p$.
For any class $\beta\in H^*(F')$ we have $\Psi^*(\iota'_!\beta)=\iota_!\psi^*\beta$.
\end{lemma}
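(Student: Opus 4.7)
The plan is to verify the lemma by combining the naturality of the Serre spectral sequence with the naturality of Poincar\'e duality for twisted coefficients, applied to each of the stages in the definition of $\iota_!$. The starting observation is that $\Psi:E\to E'$ is a fiber-preserving map over $B$; hence it restricts on the fiber over $p$ to $\psi:F\to F'$ and induces a morphism of local systems $\Psi^\sharp:\hH^*(\pi')\to\hH^*(\pi)$ on $B$ whose stalk at $p$ is $\psi^*:H^*(F')\to H^*(F)$. Equivariance of $\Psi^\sharp$ under parallel transport is immediate because $\Psi$ covers $\Id_B$, so monodromy on both sides is computed by lifting the same loops in $B$.

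I would then assemble a ladder of commutative squares, one for each stage of the definition. The first expresses that $\psi^*$ descends to coinvariants for the $\pi_1(B,p)$-action, which follows from the equivariance just observed. The second is the naturality of Poincar\'e duality $\PD:H_0(B;\mathcal{L})\to H^s(B;\mathcal{L})$ in the local system $\mathcal{L}$; this holds because $\PD$ is cap product with the fundamental class $[B]$, and cap product is natural in the coefficient system. The third is the tautological identification $H^s(B;\hH^*(\pi))=E_2^{s,*}(\pi)$ and its analogue for $\pi'$, under which $\Psi^\sharp$ induces the morphism $E_2^{s,*}(\pi')\to E_2^{s,*}(\pi)$ produced by the functoriality of the Serre spectral sequence under the map of fibrations over $B$. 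The fourth and fifth squares say that this morphism of spectral sequences is compatible with all differentials, hence descends to a morphism $E_\infty^{s,*}(\pi')\to E_\infty^{s,*}(\pi)$, and that this in turn is compatible with the inclusions $E_\infty^{s,*}\hookrightarrow H^{s+*}$ via $\Psi^*$ — again a direct consequence of the construction of the Serre spectral sequence.

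Chasing $\beta\in H^*(F')$ through the top row of the ladder and then down on the right gives $\Psi^*(\iota'_!\beta)$, while chasing down on the left and then across gives $\iota_!(\psi^*\beta)$; commutativity of the ladder yields the desired equality. The only point requiring a moment of care is the first square, where one must use the $\pi_1(B,p)$-equivariance of $\psi^*$; the remaining steps are entirely standard. I do not anticipate any real obstacle: the content of the lemma is simply bookkeeping, and the proof reduces to a clean application of the functoriality of the two classical constructions involved.
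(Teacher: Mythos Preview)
Your proposal is correct and follows the same approach as the paper, which simply states that the lemma follows from the naturality of the Serre spectral sequence. You have merely unpacked this one-line justification into its constituent steps (naturality of the coinvariants map, of Poincar\'e duality with local coefficients, of the identification with $E_2^{s,*}$, of passage to $E_\infty$, and of the edge inclusion), all of which are standard.
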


\begin{lemma}
\label{lemma:pullback-T}
Let $[B]\in H^s(B)$ the Poincar\'e dual of the class of a point.
For any $\eta\in H^*(E)$ we have $\iota_!\iota^*\eta=\pi^*[B]\smile\eta$.
\end{lemma}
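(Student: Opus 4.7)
The plan is to prove the identity by analyzing the Serre filtration of $H^*(E)$ associated with $\pi$. Since $B$ is a closed oriented manifold of dimension $s$, we have $E_r^{p,q}(\pi)=0$ for all $p>s$, so the Serre filtration $F^\bullet H^*(E)$ collapses in the sense that $F^{s+1}H^m(E)=0$ and the bottom piece $F^sH^m(E)=E_\infty^{s,m-s}(\pi)$ sits inside $H^m(E)$. I will show that both $\iota_!\iota^*\eta$ and $\pi^*[B]\smile\eta$ lie in $F^sH^{s+n}(E)$ (where $n=\deg\eta$) and have the same class in $E_\infty^{s,n}(\pi)$; this forces them to agree in $H^{s+n}(E)$.

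That $\iota_!\iota^*\eta$ lies in $F^sH^{s+n}(E)=E_\infty^{s,n}(\pi)$ is built into the definition. For $\pi^*[B]\smile\eta$, I would use the multiplicative structure of the Serre spectral sequence: the class $\pi^*[B]$ already lies in $F^sH^s(E)=E_\infty^{s,0}(\pi)$ and corresponds to $[B]\in H^s(B)=H^s(B;\hH^0(\pi))$ via the edge identification (taking the fiber component through $p$ if $F$ is disconnected); since the filtration is multiplicative, $\pi^*[B]\smile\eta\in F^sH^{s+n}(E)$ and its class in $E_\infty^{s,n}(\pi)$ is the product, on the $E_\infty$-page, of $[B]\in E_\infty^{s,0}$ with the class of $\eta$ in $E_\infty^{0,n}$. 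By the edge homomorphism $H^n(E)\to E_\infty^{0,n}(\pi)\subseteq E_2^{0,n}(\pi)=H^0(B;\hH^n(\pi))=(\hH^n(\pi)_p)^{\pi_1(B,p)}$, the latter class is exactly $\iota^*\eta$ viewed as a $\pi_1(B,p)$-invariant element of $H^n(F)$.

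The key remaining point is to identify the two resulting classes in $E_2^{s,n}(\pi)=H^s(B;\hH^n(\pi))$. On the one hand, $\iota_!\iota^*\eta$ is the image of $\iota^*\eta\in H^n(F)$ under the composition
\[
H^n(F)\twoheadrightarrow H_0(B;\hH^n(\pi))\xrightarrow{\PD}H^s(B;\hH^n(\pi)).
\]
On the other hand, $\pi^*[B]\smile\eta$ corresponds to the cup product $[B]\smile\iota^*\eta$, where $\iota^*\eta$ is treated as a class in $H^0(B;\hH^n(\pi))$. The required compatibility is the standard fact that, for any local system $M$ on a closed oriented $s$-manifold $B$, cup product with the fundamental class $[B]$ factors as
\[
H^0(B;M)\twoheadrightarrow H_0(B;M)\xrightarrow{\PD}H^s(B;M),
\]
where the first map is the canonical projection from $\pi_1$-invariants to coinvariants and the second is Poincaré duality (realized by cap product with $[B]$). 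Applied with $M=\hH^n(\pi)$ and the invariant class $\iota^*\eta$, this identifies the two classes in $E_\infty^{s,n}(\pi)$, completing the proof.

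The main obstacle I expect is not conceptual but bookkeeping: one must keep track of the identifications of $E_2^{0,n}$ and $E_2^{s,n}$ with twisted cohomology, verify that the multiplicative structure of the Serre spectral sequence reproduces the cup product in the claimed way on the $E_\infty$-page, and correctly invoke the compatibility between cup product with $[B]$ and Poincaré duality for local systems. Graded-commutativity sign conventions for the product $\pi^*[B]\smile\eta$ versus $\eta\smile\pi^*[B]$ should also be checked, but these do not affect the final identity as stated.
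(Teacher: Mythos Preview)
Your argument is correct and follows essentially the same route as the paper's proof: both exploit the multiplicativity of the Serre spectral sequence and the fact that $F^{s+1}H^*(E)=0$, so equality in $E_\infty^{s,n}$ implies equality in $H^{s+n}(E)$. The paper phrases the first step as the observation $\iota_!1_F=\pi^*[B]$ (the case $\eta=1$) and then invokes multiplicativity, leaving implicit the compatibility you spell out between cup product with $[B]$ and the Poincar\'e duality map $H_0(B;M)\to H^s(B;M)$; your version simply makes this last step explicit.
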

\begin{proof}
Let $1_F\in H^0(F)$ denote the cohomology class of the augmentation map.
The definition of $\iota_!$ implies that $\iota_!1_F=\pi^*[B]$.
For any $\eta\in H^*(E)$, the class $\iota^*\eta$ sits inside $E^{0,*}_2=\hH^*(\pi)_p^{\pi_1(B,p)}\subset
\hH^*(\pi)_p=H^*(F)$, and it survives in all pages of the spectral
sequence (here $\hH^*(\pi)_p^{\pi_1(B,p)}$ denotes
the invariants of the action of $\pi_1(B,p)$). The lemma follows from the previous observations and
the multiplicativity of Serre spectral sequence.
\end{proof}

\subsection{Finding points with big stabilizer}

In this subsection we prove Theorem \ref{thm:fixed-points-kernel-rotation-morphism}.
Let $P\to X$ be a principal circle bundle whose Euler class
$e(P)\in H^2(X)$ is equal to $\lambda$, the class chosen at the end of Subsection \ref{s:integral-cohomology}.
Let us define
$$C_3:=\max\{\delta_{n-1},\delta_n\}.$$

Suppose given an effective and $H^*$-trivial action on $X$ of $G:=\ZZ/p^e$.
Suppose that the rotation morphism $\rho:G\to T_H$ is trivial.
Let $g\in G$ be a generator and let $\gamma:X\to X$ be the homeomorphism
given by the action of $g$.
Since $G$ acts trivially on $H^2(X)\simeq\{\text{circle bundles on $X$}\}/\simeq$, we have
$\gamma^*P\simeq P$, so the class $c^{p^e}(\gamma,P)\in H^1(X)\otimes\ZZ/p^e$ is well defined.
Suppose that
$$p^f=\max\{|G_x|\mid x\in X\}.$$

We distinguish two cases.

\noindent {\bf Case 1.} Suppose first that $c^{p^e}(\gamma,P)=0$. Then the action of $G$ on $X$ lifts to an action on
$P$. Let $R<S^1$ be the group of $p^f$-th roots of unity, and let
$Q:=P/R$. Then $Q$ is a principal bundle over $S^1/R\simeq S^1$, and its Euler class satisfies $e(Q)=p^fe(P)=p^f\lambda$.
The action of $G$ on $P$ induces an action on $Q$, and for
every $y\in X$ the naturally induced action of $G_y$ on $Q_y$ is trivial, because $G_y\leq R$. Hence, if we
denote by $\zeta:X\to X':=X/G$ be the quotient map, there is a principal $S^1$-bundle
$Q'$ such that $Q\simeq\zeta^*Q'$. This implies that $p^f\lambda\in\zeta^*H^2(X')$.

By the definition of $\delta_n$, there exist classes
$\alpha_1,\dots,\alpha_r\in H^1(X)$ such that
$$\alpha_1\smile\dots\smile\alpha_r\smile\lambda^k=\delta_n[X],$$
where $k\geq 0$ is an integer satisfying $r+2k=n$, and where $[X]\in H^n(X)$ is a generator.

By Corollary \ref{cor:trivial-rotation-pullback-H-1}, the morphism
$\zeta^*:H^1(X')\to H^1(X)$ is surjective, because the rotation morphism of the action
of $G$ on $X$ is trivial. Since $p^f\lambda\in\zeta^*H^2(X')$, it follows that
$$p^{fk}\delta_n[X]=
\alpha_1\smile\dots\smile\alpha_r\smile (p^f\lambda)^k\in\zeta^*H^n(X').$$

Since $G$ acts effectively on $X$, by \cite[Lemma 4.4]{Mu2021} we have $\zeta^*H^n(X')\leq p^e H^n(X)$.
Consequently, $p^e$ divides $\delta_np^{fk}$, so $\delta_np^{fk}\geq p^e$.
It follows that, if $x\in X$ is any point satisfying $|G_x|=p^f$,
$$|G|=p^e\leq \delta_np^{fk}
=\delta_n|G_x|^{k}
\leq\delta_n|G_x|^{n/2}
\leq C_3|G_x|^{n/2},$$
where the second inequality follows from the equality $r+2k=n$.
Hence Theorem \ref{thm:fixed-points-kernel-rotation-morphism} is proved in this case.

\noindent{\bf Case 2.} Now suppose that $c^{p^e}(f,P)\neq 0$.
Choose a lift of $c^{p^e}(f,P)\in H^1(X)\otimes_{\ZZ}\ZZ/p^e$
of the form $p^s\alpha\in H^1(X)$, where $\alpha\in H^1(X)$ is primitive
(i.e., $\alpha$ is not a nontrivial multiple of an element in $H^1(X)$).

By Poincar\'e duality, there exists
some class $\eta\in H^{n-1}(X)$ such that $\alpha\smile\eta=[X]$,
where $[X]\in H^n(X)$ is a generator.
By the definition of $\delta_{n-1}$, the class of $\delta_{n-1}\eta$
in $H^{n-1}(X)/\Tor H^{n-1}(X)$ belongs to the image of $c_{\lambda,\ZZ,n-1}$,
so we can write
$$\delta_{n-1}[X]=\alpha\smile \delta_{n-1}\eta=\alpha\smile\eta_0+\dots+\alpha\smile\eta_{n-1},$$
where
$$\eta_j=\beta_{j,1}\smile\dots\smile\beta_{j,n-1-2j}\smile\lambda^j,$$
for classes $\beta_{j,i}\in H^1(X)$. Let $p^m$ be the biggest power of $p$ dividing $\delta_{n-1}$.
Take some $j$ such that $p^{m+1}$ does not divide $\alpha\smile\eta_j$,
and define $s:=n-1-2j$, $t:=j$, and $\beta_i:=\beta_{j,i}$.
Since $p^{m+1}$ does not divide
$\alpha\smile\beta_1\smile\dots\smile\beta_r\smile\lambda^t$,
necessarily $\alpha\smile\beta_1\smile\dots\smile\beta_r$ is
nonzero, so the classes $\alpha,\beta_1,\dots,\beta_r$ are linearly independent.
Let $K\leq H$ be the subgroup generated by $\alpha,\beta_1,\dots,\beta_r$.
Since the rotation morphism of the action of $G$ on $X$ is trivial, by
Theorem \ref{thm:rotation-morphism} there is a map $\phi:X\to T_H$ satisfying:
\begin{enumerate}
\item[(P1)] $\phi$ represents the canonical element in $[X,T_H]$, and
\item[(P2)] the map $\phi$ is $G$-invariant, i.e., $\phi(g\cdot x)=\phi(x)$ for every $g\in G$ and $x\in X$.
\end{enumerate}
Denote by $r_K:T_H\to T_K$ the restriction map, and let $\phi_K:=r_K\circ\phi$.
By (P1), the image of $\phi_K^*:H^1(T_K)\to H^1(X)$ is $K$.
Choosing a suitable orientation of $T_K$,
the Poincar\'e dual of the point class in $H_0(T_K)$ is an element $[T_K]\in H^{r+1}(T_K)$ satisfying
\begin{equation}
\label{eq:phi-star-T-K}
\phi_K^*[T_K]=\alpha\smile\beta_1\smile\dots\smile\beta_r.
\end{equation}
Denote $V_K=\Hom(K,\RR)$. The natural projection
$\pi_K:V_K\to T_K$ is a universal cover.
Consider the pullback through $\phi_K$ of the universal cover of $\pi_K$,
$$\ov{X}:=\{(x,u)\in X\times V_K\mid \phi_K(x)=\pi_K(u)\},$$
and let $\zeta:\ov{X}\to X$ denote the projection. Since $\phi_K^*H^1(T_K)=K$ and $\alpha\in K$, we have
\begin{equation}
\label{eq:zeta-pullback-alpha}
\zeta^*\alpha=0.
\end{equation}
By (P2), the action
of $G$ on $X$, combined with the trivial action on $V_K$, defines an action on $X\times V_K$
which leaves $\ov{X}$ invariant. In other words, the action of $G$ on $X$ lifts to an action
on $\ov{X}$. The action of $K^*=\Hom(K,\ZZ)$ on $V_K$ through addition allows to identify
$T_K$ with $V_K/K^*$, so $\zeta:\ov{X}\to X$ is a principal $K^*$-bundle. The action of
$K^*$ on $\ov{X}$ commutes with the action of $G$.

Consider the quotient maps
$$\zeta:X\to X':=X/G,\qquad \ov{\zeta}:\ov{X}\to \ov{X}':=\ov{X}/G.$$
The spaces and maps defined so far fit in the following commutative diagram:
$$\xymatrix{\ov{X}\ar@{^(->}[r]^-{\iota} \ar@/^2pc/[rrr]<3pt>^{\xi}
\ar[dd]_{\ov{\zeta}}
& \ov{X}\times_{K^*}V_K\ar[rr]^-{\Theta} \ar[dd]_Z \ar[rd]^{\Pi} & & X \ar[dd]^{\zeta}\ar[ld]^{\phi_K} \\
&& T_K \\
\ov{X}'\ar@{^(->}[r]^-{\iota'} \ar@/_2pc/[rrr]^{\xi'}
& \ov{X}'\times_{K^*}V_K\ar[rr]^-{\Theta'} \ar[ru]^{\Pi'} & & X',\ar[lu]_{\phi_K'}}$$
where $\iota,\iota'$ are the inclusions of the fibers of $\Pi,\Pi'$ over the class of $0$ in $T_K$,
and $Z$ is induced by the map $\ov{\zeta}\times\Id_{V_K}:\ov{X}\times V_K\to \ov{X}'\times V_K$.
The projections $\Theta$ and $\Theta'$ are homotopy equivalences, because they are both fibrations
with fiber $V_K$, which is contractible.

Consider the pullback circle bundle $\ov{P}:=\xi^*P\to \ov{X}$.
We have
\begin{multline}
\label{eq:push-forward-iota-lambda}
\iota_!(e(\ov{P})^t)=\iota_!\iota^*\Theta^*e(P)^t=\iota_!\iota^*\Theta^*\lambda^t=
\Pi^*[T_K]\smile\Theta^*\lambda^t=
\\
=\Theta^*(\phi_K^*[T_K]\smile\lambda^t)=\Theta^*(\alpha\smile\beta_1\smile\dots\smile\beta_r\smile\lambda^t),
\end{multline}
where in the third equality we have used Lemma \ref{lemma:pullback-T}

By Lemma \ref{lemma:lifting-c-f-P} 
and (\ref{eq:zeta-pullback-alpha}), the action of $G$ on $\ov{X}$
lifts to an action on $\ov{P}$. Let $R<S^1$ be the group of $p^f$-th roots of unity, and let
$\ov{Q}:=\ov{P}/R$. Then $\ov{Q}$ is a principal bundle over $S^1/R\simeq S^1$,
and its Euler class satisfies $e(\ov{Q})=p^fe(\ov{P})=p^f\iota^*\Theta^*\lambda$.
Arguing as in Case 1, we conclude that there exists a circle bundle $\ov{Q}'\to \ov{X}'$
such that $\ov{\zeta}^*\ov{Q}'=\ov{Q}$. By Lemma \ref{lemma:naturality-T},
$$p^{ft}\iota_!e(\ov{P})^t=\iota_!e(\ov{Q})^t=\iota_!\ov{\zeta}^*e(\ov{Q}')^t=Z^*\iota'_!e(\ov{Q}')^t$$
Combining this with (\ref{eq:push-forward-iota-lambda}) we conclude that
$$p^{ft}\Theta^*(\alpha\smile\beta_1\smile\dots\smile\beta_r\smile\lambda^t)
\in Z^*H^n(\ov{X}'\times_{K^*}V_K).$$
Since both $\Theta$ and $\Theta'$ are homotopy equivalences, it follows that
$$p^{ft}\alpha\smile\beta_1\smile\dots\smile\beta_r\smile\lambda^t\in \zeta^*H^n(X').$$
As in Case 1, \cite[Lemma 4.4]{Mu2021} implies $\zeta^*H^n(X')\leq p^e H^n(X)$,
because $G$ acts effectively on $X$. Since $\alpha\smile\beta_1\smile\dots\smile\beta_r\smile\lambda^t\in H^n(X)\simeq\ZZ$ is not
divisible by $p^{m+1}$, $p^e$ divides $p^mp^{ft}$, so, letting $x\in X$ be any point such that
$|G_x|=p^f$, we have $|G|=p^e\leq p^mp^{ft}=p^m|G_x|^t\leq \delta_{n-1}|G_x|^t$.
Since $t\leq n/2$, it follows that
$$|G|\leq \delta_{n-1}|G_x|^{n/2}\leq C_3|G_x|^{n/2},$$
and hence Theorem \ref{thm:fixed-points-kernel-rotation-morphism} is proved in Case 2.

\section{Proofs of the main theorems}
\label{s:proofs-main-thms}

\subsection{Proof of Theorem \ref{thm:toral-rank-Carlsson}}
\label{ss:proof-thm:toral-rank-Carlsson}
Let $X$ be an $n$-dimensional WLS manifold. Let $C_M$ be the number assigned to $M$
by Lemma \ref{lemma:Minkowski}.

By \cite[Theorem 2.5]{MannSu} there is a natural number $\mu$ such that, for every prime $p$
and every effective action of $(\ZZ/p)^m$ on $X$, we have $m\leq\mu$.

Choose a natural number $C$. Let $C_3$ be the number given by
applying Theorem \ref{thm:fixed-points-kernel-rotation-morphism} to $X$, let
$C_1:=(C^{[(n+1)/2]}C_3)^{\mu}$,
and let $C_2=C_2(X,C_1)$ be the number associated to $X$ and $C_1$
by Theorem \ref{thm:main-rotation-morphism}. Using the constant $C_M$ given by Lemma \ref{lemma:Minkowski}, we define
$$C':=1+C_M!C_2!.$$
We are going to prove that Theorem \ref{thm:toral-rank-Carlsson} holds for this choice
of $C'$. Let $p^e\geq C'$ be a prime power, and suppose given an
action of $G:=(\ZZ/p^e)^k$ on $X$ all of whose stabilizers have size
at most $C$. By the definition of $C_M$, there is a subgroup $G_1\leq G$
satisfying $[G:G_1]\leq C_M$ and whose action on $H^*(X)$ is trivial.
By \cite[Lemma 2.1]{Mu2021}, there is a subgroup $G_2\leq G_1$ isomorphic to
$(\ZZ/p^{e_2})^k$ and satisfying $C_M!p^{e_2}\geq p^e$.

Let $H:=H^1(X)$ and let
$\rho:G_2\to T_H$
be the rotation morphism of the action. Let $G_3:=\Ker\rho$.
We claim that $|G_3|\leq C_1$.
Indeed, we may write $G_3\simeq \ZZ/p^{d_1}\times\dots\times \ZZ/p^{d_m}$,
where $d_1\leq\dots\leq d_m$ and $m\leq\mu$.
Let $G_3'\leq G_3$ be the subgroup
corresponding to the last factor $\ZZ/p^{d_m}$.
It suffices to prove that $|G_3'|\leq C_1^{1/\mu}=C^{[(n+1)/2]}C_3$.
But if $|G_3'|>C^{[(n+1)/2]}C_3$ then, by Theorem \ref{thm:fixed-points-kernel-rotation-morphism},
there would be a point $x\in X$ such that
$$|G_x|\geq|(G_3')_x|\geq (C_3^{-1}|G_3'|)^{2/n}>C,$$
contradicting our assumption, so the claim is proved.

By Theorem \ref{thm:main-rotation-morphism} there exists a subgroup $K\leq H$,
such that the composition $\rho_K:=r_K\circ\rho:G_2\to T_K$ satisfies
$$|\Ker \rho_K|\leq C_2,$$
and the map $\psi_K:=r_K\circ\psi:X\to T_K$ has the property that
$$\psi_K:H^*(T_K)\to H^*(X)$$
is injective. We need to prove that $\dim T_K\geq k$.

The bound $|\Ker \rho_K|\leq C_2$ and \cite[Lemma 2.3]{Mu2021} implies the existence of
a subgroup $G_4\leq\rho(G_2)$ isomorphic to $(\ZZ/p^{e_4})^k$, where
$C_2!p^{e_4}\geq p^{e_2}$.
It follows that
$$p^{e_4}\geq \frac{p^e}{C_M!C_2!}>1,$$
and consequently, by \cite[Lemma 2.5]{Mu2021}, $\dim T_K\geq k$, as we wanted to prove.

\subsection{Proof of Theorem \ref{thm:main-big-prime-p}}
\label{ss:proof-thm:main-big-prime-p}

Let $X$ be a WLS manifold.
Let $C_M$ be the number given by applying Lemma \ref{lemma:Minkowski} to $X$.
Let $C_2$ be the number assigned to $C_1:=1$ and $X$ by Theorem \ref{thm:main-rotation-morphism},
and let $C'$ be the number given by applying Theorem \ref{thm:fixed-points-kernel-rotation-morphism} to $X$.

Following \cite{Mu2022}, we say that an action of a group $G$ on $X$ has the {\it weak fixed point
property} if for every $g\in G$ there exists some $x\in X$ such that $g\cdot x=x$.
By \cite[Theorem 1.6]{Mu2022} there is a constant $C_F$ such that, for any action of
a finite group $G$ on $X$ with the weak fixed point property, there is a subgroup $G'\leq G$
such that $[G:G']\leq C_F$ and such that $X^{G'}\neq\emptyset$.

We are going to prove that the number
$$C:=1+\max\{C_2,C',C_F,C_M\}$$
has the desired properties. Let $p\geq C$ be a prime, and assume that $G:=(\ZZ/p)^r$
acts effectively on $X$. Since $C>C_M$, the action of $G$ on
$H^*(X)$ is trivial. Let $H:=H^1(X)$, and let $\rho:G\to T_H$ be the rotation morphism.
Let $G_0:=\Ker\rho$, and choose a subgroup $G_1\leq G$ such that $G=G_0\times G_1$.
Then $\rho$ maps isomorphically $G_1$ to $\rho(G_1)<T_H$.
By Theorem \ref{thm:main-rotation-morphism} there is:
\begin{enumerate}
\item[(Q1)] a subgroup $K\leq H$ such that $\rho_K:=r_K\circ\rho:G_1\to T_K$ is injective
and such that $H^*(X_K)$ is a finitely generated $\ZZ$-module, where $r_K:T_H\to T_K$ is the
restriction map, and
\item[(Q2)] a finite index subgroup $K_0^*\leq K^*$ such that Serre's spectral sequence for
rational cohomology of the fibration $X_K\times_{K_0^*}V_K\to V_K/K_0^*$
degenerates at the second page.
\end{enumerate}
Let $\phi:X\to T_H$ be a $G$-equivariant morphism as given by Theorem \ref{thm:rotation-morphism}.
Let $k=\dim K\otimes \QQ$, choose an isomorphism $\zeta:T_K\simeq T^k$, and let
$\xi:X\to T^k$ be the composition $\zeta\circ r_K\circ \phi$.
By (2) in Theorem \ref{thm:rotation-morphism}, $\xi$ is $G_1$-equivariant
with respect to the morphism $\zeta\circ\rho:G_1\to T^k$, so
statement (1) in Theorem \ref{thm:main-big-prime-p} is proved.

Let $\pi:X'\to X$ be the pullback of the covering
$\pi_T:T_{K_0}:=V_K/K_0^*\to V_K/K^*=T_K$ under the
map $\psi_K:X\to T_K$.
Arguing as in Lemma \ref{lemma:homotopy-commutative-diagram},
there is a homotopy commutative diagram
$$\xymatrix{X_K\times_{K_0^*}V_K \ar[r]^-{\simeq} \ar[d] &  X' \ar[d]^{\zeta\circ\psi_K\circ\pi} \\
T_{K_0}\ar[r]^{\zeta\circ\pi_T} & T^k.}$$
The morphism $\pi_T^*:H^*(T_K;\QQ)\to H^*(T_{K_0};\QQ)$ is an isomorphism,
so, denoting $X'':=X_K$, the property in (Q2) above implies statement
statement (2) in Theorem \ref{thm:main-big-prime-p}.

In thus remains to prove statement (3) in Theorem \ref{thm:main-big-prime-p}.
By Theorem \ref{thm:fixed-points-kernel-rotation-morphism}, any $g\in G_0$
has a fixed point, so $G_0$ has the weak fixed point property.
By \cite[Theorem 1.6]{Mu2022} and our choice of $C$, it follows that $X^{G_0}\neq\emptyset$.

Let $F\subseteq X^{G_0}$ be any connected component.
By Smith theory (see \cite[Chap V, Theorem 2.2]{Bo}), $F$ a $\ZZ/p$-cohomology manifold.
To finish, we want to prove that $\dim F\geq k$. We will use an argument similar
to that in \cite[Lemma 9.3]{Mu2021}.
Let $\pi_K:X_K\to X$ be the projection.
By (2) in Theorem \ref{thm:rotation-morphism},
the action of $G_0$ on $X$ lifts to an action on $X_K$ which commutes with
the action of $K^*\simeq \ZZ^k$, and we have
$X_K^{G_0}=\pi_K^{-1}(X^{G_0})$. Let $F_K:=\pi_K^{-1}(F)$.
Then $F_K$ is the union of some of the connected components of $X_K^{G_0}$.
By Smith theory (see e.g. \cite[Lemma 5.1]{Mu2018}), $\dim H^*(X_K^{G_0};\ZZ/p)\leq
\dim H^*(X_K;\ZZ/p)<\infty$, so $X_K^{G_0}$, and hence $F_K$, has finitely many connected
components.

Let $F_K'\subseteq F_K$ be one connected component.
The action of $K^*$ on $X_K$ permutes the connected components of $F_K$,
so the subgroup $K^*_F\leq K^*$ consisting of elements leaving $F_K'$ invariant satisfies $[K^*:K^*_F]<\infty$, which implies $K^*_F\simeq\ZZ^k$.

Since $\dim H^*(F_K';\ZZ/p)<\infty$, there is a finite index subgroup $K^*_{F,1}\leq K^*_F$
which acts trivially on $H^*(F_K';\ZZ/p)$. Again, $K^*_{F,1}\simeq\ZZ^k$.
The quotient $F_K'/K^*_{F,1}$ is homotopy equivalent to $F_{K'}\times_{K^*_{F,1}}V_K$.
Let $d=\max\{r\mid H^r(F_K';\ZZ/p)\neq 0\}$. Then the entry $E_2^{k,d}$
in the Serre spectral sequence with $\ZZ/p$-coefficients for the fibration
$F_{K'}\times_{K^*_{F,1}}V_K\to V_K/K^*_{F,1}\simeq T^k$
is nonzero (because $K^*_{F,1}$ acts trivially on $H^d(F_K';\ZZ/p)\neq 0$, and
$H^k(V_K/K^*_{F,1};\ZZ/p)\simeq\ZZ/p$), and $E_2^{k,d}$
maps isomorphically onto $E_{\infty}^{k,d}$, because in every page the differentials
to and from $E_r^{k,d}$ are zero. This implies that
$H^{k+d}(F_K'/K^*_{F,1};\ZZ/p)\neq 0$. Since $F_K'/K^*_{F,1}$ is a finite cover of $F$,
it follows that $\dim F\geq k+d\geq k$, as we wished to prove.

\subsection{Proof of Theorem \ref{thm:main-disc-deg-sym}}

Let $X$ be a WLS $n$-dimensional manifold. Suppose there is a natural number $m$,
a sequence of natural numbers $r_i\to\infty$, and an effective action of
$(\ZZ/r_i)^m$ on $X$ for each $i$.
Let $\pP=\{p\text{ prime}\mid p\text{ divides some $r_i$}\}$. We distinguish two cases.

\noindent{\bf Case 1.}
If the set $\pP$
is infinite, we may choose a sequence of primes $p_i$ in $\pP$ satisfying
$p_i\to\infty$. For each $i$ there is some $j$ such that $p_i$ divides
$r_j$, and consequently $(\ZZ/p_i)^m$ is isomorphic to a subgroup of
$(\ZZ/r_{j})^m$. Therefore, restricting the action of $(\ZZ/r_j)^m$ to such subgroup,
we get an effective action of $(\ZZ/p_i)^m$ on $X$.
Let $C$ be the constant assigned to $X$ by Theorem \ref{thm:main-big-prime-p}.
Suppose that $p_i\geq \max\{3,C\}$. By Theorem \ref{thm:main-big-prime-p},
there is a splitting $(\ZZ/p_i)^m\simeq (\ZZ/p_i)^{m-s}\times(\ZZ/p_i)^s$
such that:
\begin{enumerate}
\item the action of $(\ZZ/p_i)^{m-s}$ on $X$ has nonempty fixed point set $F$, and
each connected component of $F$ is a $\ZZ/p_i$-cohomology manifold of dimension $\geq s$,
\item there is a continuous map $X\to T^k$, for some $k\geq s$, inducing an injective
map $H^*(T^k)\to H^*(X)$.
\end{enumerate}

\begin{lemma}
\label{lemma:embedding-in-GL(n-s)}
$(\ZZ/p_i)^{m-s}$ is isomorphic to a subgroup of $\GL(n-s,\RR)$.
\end{lemma}
\begin{proof}
Let $G:=(\ZZ/p_i)^{m-s}$ and let $x\in X^G$. For each subgroup $K\leq G$, let
$\nu(K)$ denote the dimension at $x$ of the $\ZZ/p$-cohomology manifold $X^K$
(see \cite[Chap V, Theorem 2.2]{Bo}). The arguments in the proof of
\cite[Theorem 3.2]{Mu2022} prove that the function
assigning to each subgroup $K\leq G$ the number $\nu(K)$ satisfies the Borel--Smith conditions.
(For the definition of the Borel--Smith conditions, see \cite[Chap III, Definition (5.1)]{tD} or \cite[Lemma 3.1]{Mu2022}). Of course, we have $\nu(\{1\})=n$.
It then follows from a result of Dotzel and Hamrick
(see  \cite[Section 1]{DH} or \cite[Chap III, Theorem 5.13]{tD}) that there is a linear
representation $\xi:G\to\GL(n,\RR)$ such that, for every subgroup $K\leq G$,
we have $\nu(K)=\dim (\RR^n)^K$.
Since $G$ is finite, we may assume, replacing $\xi$ by a conjugate if necessary, that
$\xi(G)\leq \O(n,\RR)$.
Since $G$ acts effectively on $X$, we have $\nu(K)<n$ for every
nontrivial $\{1\}\neq K\leq G$ (see \cite[Chap I, Corollary 4.6]{Bo}),
so the representation $\xi$ is faithful.
Since by (1) each connected component of $X^G$ is a $\ZZ/p_i$-cohomology manifold of dimension $\geq s$, it follows that $\nu(G)\geq s$.
This implies that $V:=(\RR^n)^G$ satisfies $\dim V\geq s$.
Now, the definition of $V$ implies that
$$\xi(G)\leq \{A\in\O(n,\RR)\mid A|_V=\Id_V\}\simeq\O(V^{\perp}).$$
Since $\dim V\geq s$, $\dim V^{\perp}=n-\dim V\leq n-s$, so $\O(V^{\perp})$,
and consequently $\xi(G)\simeq G$, is isomorphic to a subgroup of $\O(n-s,\RR)<\GL(n-s,\RR)$.
\end{proof}

\begin{lemma}
\label{lemma:cota-m-s}
Let $r$ be equal either to an odd prime or to $4$.
If there exists a faithful representation $\xi:(\ZZ/r)^t\to\GL(u,\RR)$ for some natural numbers $t,u$,
then $t\leq u/2$.
\end{lemma}
\begin{proof}
Suppose first that $r$ is equal to and odd prime $p$.
Let $G:=(\ZZ/p)^t$. Since $G$ is finite abelian,
the complex representation $\xi\otimes\CC:G\to\GL(u,\CC)$
splits as a direct sum of irreducible complex $1$-dimensional characters.
Since $p$ is odd, the only
$1$-dimensional complex representation of $G$ that is isomorphic to its conjugate is the trivial representation. Consequently, we may write
$\xi\otimes\CC\simeq \eta_1\oplus\ov{\eta_1}\oplus\dots\oplus
\eta_k\oplus\ov{\eta_k}\oplus {\mathbf 1}^l$, where $\eta_i:G\to \CC^*$ is nontrivial for
every $i$ and ${\mathbf 1}:G\to\CC^*$ denotes the trivial $1$-dimensional representation.
Clearly, $2k+l=u$. Since $\xi$ is injective, so is $\eta_1\oplus\ov{\eta_1}\oplus\dots\oplus
\eta_k\oplus\ov{\eta_k}\oplus {\mathbf 1}^l$. The latter factors through the representation
$\eta_1\oplus\dots\oplus\eta_k$, so $\eta_1\oplus\dots\oplus\eta_k$ is necessarily injective. Since $\Ker\eta_i\simeq(\ZZ/p)^{t-1}$ for every $i$ (as each $\eta_i$ is $1$-dimensional), in order for the equality $\bigcap_i\Ker\eta_i=\{0\}$ to hold true we must have $k\geq t$. It follows that
$2t\leq 2k\leq u$, so $t\leq u/2$.

Now suppose that $r=4$, and let $G:=(\ZZ/4)^t$. In this case,
if $\eta:G\to\CC^*$ is a character isomorphic to its conjugate,
then $\eta$ is not necessarily trivial, but nevertheless $\eta|_{2G}$ is trivial (we use here additive notation on $G$), because $\eta$ takes values in $\{1,-1\}$. Consequently, we may write
$$\xi\otimes\CC\simeq \eta_1\oplus\ov{\eta_1}\oplus\dots\oplus
\eta_k\oplus\ov{\eta_k}\oplus\nu_1\oplus\dots\oplus\nu_v\oplus{\mathbf 1}^l,$$
where $\nu_j\simeq\ov{\nu_j}$ for every $j$. Restricting to $2G$, we obtain
$$(\xi\otimes\CC)|_{2G}\simeq
\eta_1|_{2G}\oplus\ov{\eta_1}|_{2G}\oplus\dots\oplus
\eta_k|_{2G}\oplus\ov{\eta_k}|_{2G}\oplus{\mathbf 1}^{l+v}.$$
Arguing at this point as in the case of odd primes, with $G$ replaced by $2G\simeq(\ZZ/2)^t$, we obtain again $t\leq u/2$.
\end{proof}

The existence of an injective
map $H^*(T^k)\to H^*(X)$ given in item (2) above implies that $s\leq k\leq\tau(X)$.
Combining Lemmas \ref{lemma:embedding-in-GL(n-s)} and \ref{lemma:cota-m-s} we obtain $m-s\leq (n-s)/2$. Combining the two inequalities we get:
$$m=(m-s)+s\leq \frac{n-s}{2}+s=\frac{n+s}{2}\leq\frac{n+\tau(X)}{2}.$$
This proves the first part of Theorem \ref{thm:main-disc-deg-sym} when
$\pP$ is infinite. As for the second part, note that if $m=n$
then $\tau(X)\geq n$, so $\tau(X)=n$ and hence $X$ is rationally hypertoral in the sense of \cite{Mu2021}. Hence,
applying \cite[Theorem 1.3(2)]{Mu2021}, the proof of Theorem \ref{thm:main-disc-deg-sym}
is concluded in the present case.

\noindent{\bf Case 2.}
If $\pP$ is finite, there is some $p\in\pP$ and a sequence of naturals $e_i\to\infty$ such that,
for each $i$, $p^{e_i}$ divides $m_j$ for some $j$. Arguing as in Case 1,
we get an effective action of $(\ZZ/p^{e_i})^m$ on $X$ for each $i$.
By Lemma \ref{lemma:Minkowski} we may assume, replacing each $e_i$ (resp. $(\ZZ/p^{e_i})^m$)
by $e_i-C$ (resp. $(p^C\ZZ/p^{e_i})^m\simeq (\ZZ/p^{e_i-C})^m$)
for some suitable constant $C$ independent of $i$,
that the action of $(\ZZ/p^{e_i})^m$ on $X$ is $H^*$-trivial.

Let $\rho_i:(\ZZ/p^{e_i})^m\to T_H$ be the rotation morphism.
We are going to use the following result, which is \cite[Theorem 13.1]{Birkhoff}:

\begin{lemma}
Let $G=(\ZZ/p^d)^m$, and let $H\leq G$ be any subgroup.
There is an isomorphism $\phi:G\to G$ and integers $d_1\geq d_2\geq\dots\geq d_m$
such that
$$\phi(H)=\{(g_1,\dots,g_m)\in (\ZZ/p^d)^m\mid g_j\in p^{d-d_j}\ZZ/p^d\},$$
so that $\phi(H)\simeq \ZZ/p^{d_1}\times\dots\times\ZZ/p^{d_m}$.
\end{lemma}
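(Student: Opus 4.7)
The plan is to reduce the statement to the classical Smith normal form theorem over $\ZZ$ by lifting everything from $(\ZZ/p^d)^m$ to $\ZZ^m$. First I would consider the quotient map $\pi:\ZZ^m\to (\ZZ/p^d)^m=G$ and set $L:=\pi^{-1}(H)\subseteq\ZZ^m$. Since $L$ contains $p^d\ZZ^m$, it is a subgroup of full rank $m$, hence itself a free $\ZZ$-module of rank $m$.

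Next I would invoke Smith normal form for the inclusion $L\hookrightarrow\ZZ^m$: there exists a $\ZZ$-basis $e_1',\dots,e_m'$ of $\ZZ^m$ and positive integers $\lambda_1\mid\lambda_2\mid\dots\mid\lambda_m$ such that $\lambda_1e_1',\dots,\lambda_me_m'$ is a $\ZZ$-basis of $L$. Because $p^d\ZZ^m\subseteq L$, each $p^de_i'$ lies in $L$, which forces $\lambda_i\mid p^d$. Thus $\lambda_i=p^{a_i}$ with $0\leq a_1\leq a_2\leq\dots\leq a_m\leq d$. Setting $d_j:=d-a_j$ produces the desired integers $d\geq d_1\geq d_2\geq\dots\geq d_m\geq 0$.

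The final step is to realize the automorphism $\phi$ of $G$. The change-of-basis matrix $U\in\GL(m,\ZZ)$ that sends $e_i'$ to the standard basis vector $e_i$ of $\ZZ^m$ reduces modulo $p^d$ to an element of $\GL(m,\ZZ/p^d)$, and this defines an automorphism $\phi:G\to G$. Under $\phi$, the subgroup $\phi(H)=\phi(\pi(L))$ is generated by the images of the basis elements $\lambda_ie_i'=p^{a_i}e_i'$, i.e., by $p^{a_i}e_i=p^{d-d_i}e_i$ modulo $p^d$. This is precisely the subgroup $\{(g_1,\dots,g_m)\mid g_j\in p^{d-d_j}\ZZ/p^d\}$ appearing in the statement, and it is manifestly isomorphic to $\ZZ/p^{d_1}\times\dots\times\ZZ/p^{d_m}$.

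The argument is essentially bookkeeping on top of Smith normal form over the PID $\ZZ$, so there is no real obstacle. The only things worth double-checking are that $L$ has full rank (automatic from the inclusion $p^d\ZZ^m\subseteq L$) and that the divisibility chain $\lambda_1\mid\dots\mid\lambda_m$ produced by Smith normal form translates to the correct monotonicity $d_1\geq\dots\geq d_m$ after setting $d_j=d-a_j$. Everything else is purely formal.
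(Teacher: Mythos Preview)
Your proof is correct. The paper does not actually prove this lemma; it simply quotes it as \cite[Theorem 13.1]{Birkhoff} and moves on. Your argument via lifting to $\ZZ^m$ and applying the stacked-bases form of the Smith normal form theorem is the standard route to this result, and every step checks out: the full-rank claim for $L$, the divisibility $\lambda_i\mid p^d$, the descent of $U\in\GL(m,\ZZ)$ to an automorphism of $(\ZZ/p^d)^m$, and the identification of $\phi(H)$ with the product subgroup. In effect you have supplied a self-contained proof where the paper only gives a citation, and your argument is essentially what underlies Birkhoff's theorem specialized to the homocyclic $p$-group $(\ZZ/p^d)^m$.
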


For each $i$ the action of $(\ZZ/p^{e_i})^m$ on $X$ is given by a group morphism
$(\ZZ/p^{e_i})^m\to\Homeo(X)$. By the previous lemma, pre-composing this morphism
by a suitable isomorphism of $(\ZZ/p^{e_i})^m$, we may assume that
$$\Ker\rho_i=\{(g_1,\dots,g_m)\in (\ZZ/p^{e_i})^m\mid g_j\in p^{e_i-d_{j,i}}\ZZ/p^{e_i}\}$$
for some integers $d_{1,i}\geq d_{2,i}\geq\dots\geq d_{m,i}$. Let
$$t=\max\{j\mid d_{j,i}\text{ is unbounded as $i\to\infty$}\}.$$

Let $G_i:=\{(g_1,\dots,g_m)\in (\ZZ/p^{e_i})^m\mid g_1=\dots=g_t=0\}$.
Then $G_i\simeq(\ZZ/p^{e_i})^s$, where $s=m-t$, and $(\Ker \rho_i)\cap G_i$ is bounded.
We may thus apply Theorem \ref{thm:main-rotation-morphism} to the action of $G_i$ on
$X$ and conclude, picking $i$ big enough,
the existence of a continuous map $X\to T^k$, for some $k\geq s$, inducing an injective
map $H^*(T^k)\to H^*(X)$. It follows that $s\leq\tau(X)$.

Replacing, if necessary, the sequence of actions of $(\ZZ/p^{e_i})^m$ by a subsequence and relabelling,
we may assume that $d_{t,i}\to\infty$ as $i\to\infty$. Define $\delta_i:=d_{t,i}$. Then
$\Ker\rho_i$ has a subgroup $K_i$ isomorphic to $(\ZZ/p^{\delta_i})^t$, and $\delta_i\to\infty$.
Let $C'$ be the constant given by applying Theorem \ref{thm:fixed-points-kernel-rotation-morphism} to $X$.
Let $c:=\lceil\log_p (C')^{2/n}\rceil$, and define $\epsilon_i:=2\delta_in^{-1}-c$ for each $i$.
By Theorem \ref{thm:fixed-points-kernel-rotation-morphism}, any
subgroup of $K_i$ isomorphic to $\ZZ/p^{\delta_i}$ has a subgroup isomorphic
to $\ZZ/p^{\epsilon_i}$ which fixes at least one point of $X$. It follows that
$K_i':=p^{\delta_i-\epsilon_i}K_i\simeq (\ZZ/p^{\epsilon_i})^t$ has the weak
fixed point property (see the second paragraph in Subsection \ref{ss:proof-thm:main-big-prime-p}).

Let $C_F$ be the number defined in Subsection \ref{ss:proof-thm:main-big-prime-p}
using \cite[Theorem 1.6]{Mu2022}. For each $i$ there is a subgroup $K_i''\leq K_i$ satisfying
$[K_i':K_i'']\leq C_F$ and such that $K_i''$ fixes at least one point in $X$.
Choosing $i$ such that $p^{\epsilon_i}\geq p^2C_F$, we may assume that
$K_i''$ contains a subgroup isomorphic to $(\ZZ/p^2)^t$.
Arguing as in the proof of Case 1, using Lemma \ref{lemma:embedding-in-GL(n-s)},
we may deduce that
$K_i''$ is isomorphic to a subgroup of $\GL(n-s,\RR)$.
Since $K_i''$ has a subgroup isomorphic to $(\ZZ/p^2)^t$, Lemma \ref{lemma:cota-m-s}
implies that $t\leq (n-s)/2$. Now the proof of Case 2 can be concluded as in Case 1.

The previous arguments also imply the following result, which generalizes most of
Theorem \ref{thm:main-big-prime-p} to actions of groups of the form $(\ZZ/p^f)^r$.

\begin{theorem}
Let $X$ be an $n$-dimensional WLS manifold.
For any natural number $C_0$ there exists a number $C_1$ such that:
for any prime power $p^f\geq C_1$, and any effective and $H^*$-trivial
action of $G:=(\ZZ/p^f)^r$ on $X$ with rotation morphism $\rho:G\to T_H$
(where $H=H^1(X)$), there exist subgroups
$G_0\simeq(\ZZ/p^e)^{r-s}$ and $G_1\simeq (\ZZ/p^e)^s$ of $G$ satisfying:
\begin{enumerate}
\item $p^e\geq C_0$,
\item $\rho|_{G_1}$ is injective,
\item $X^{G_0}\neq\emptyset$, and each connected component of $X^{G_0}$
has dimension $\geq (n-s)/2$.
\end{enumerate}
\end{theorem}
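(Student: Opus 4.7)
The plan is to adapt the argument of Case 2 in the proof of Theorem \ref{thm:main-disc-deg-sym} to the setting of a single action of $G=(\ZZ/p^f)^r$ with $p^f$ sufficiently large, rather than a limiting sequence. By Birkhoff's structure theorem for subgroups of abelian $p$-groups (as cited in that proof), after pre-composing the action with an automorphism of $G$ we may assume that
$$\Ker\rho=\{(g_1,\dots,g_r)\in(\ZZ/p^f)^r\mid g_j\in p^{f-d_j}\ZZ/p^f\}$$
for integers $f\geq d_1\geq d_2\geq\dots\geq d_r\geq 0$. Fix a threshold $D$, to be chosen in terms of $C_0$, $n$, and the constants $C_3$ from Theorem \ref{thm:fixed-points-kernel-rotation-morphism}, $C_F$ from \cite[Theorem 1.6]{Mu2022}, and $C_M$ from Lemma \ref{lemma:Minkowski}. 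Let $t$ be the number of indices $j$ with $d_j\geq D$ and set $s:=r-t$.

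For the ``injective'' direction, the subgroup $\{g\in G\mid g_j=0 \text{ for } j\leq t\}\simeq(\ZZ/p^f)^s$ has intersection with $\Ker\rho$ of cardinality at most $p^{(D-1)s}$, so \cite[Lemma 2.3]{Mu2021} produces inside it a subgroup $G_1\simeq(\ZZ/p^{e_1})^s$ with $\rho|_{G_1}$ injective, where $p^{e_1}$ is at least $p^f/p^{(D-1)s}$ up to a constant. For the ``fixing'' direction, set $K:=\{g\mid g_j\in p^{f-D}\ZZ/p^f \text{ for } j\leq t,\ g_j=0 \text{ for } j>t\}\simeq(\ZZ/p^D)^t\leq\Ker\rho$. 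Applying Theorem \ref{thm:fixed-points-kernel-rotation-morphism} to cyclic subgroups of $K$ exactly as in Case 2 of the proof of Theorem \ref{thm:main-disc-deg-sym}, one checks that for a suitable $e_0\leq D$ the subgroup $p^{D-e_0}K\simeq(\ZZ/p^{e_0})^t$ has the weak fixed point property; then \cite[Theorem 1.6]{Mu2022} yields a subgroup of index at most $C_F$ with a common fixed point, and another application of \cite[Lemma 2.3]{Mu2021} extracts $G_0\simeq(\ZZ/p^{e_0'})^{r-s}$ with $p^{e_0'}\geq p^{e_0}/C_F$ up to a constant. Setting $e:=\min(e_0',e_1)$ and replacing $G_0,G_1$ by their appropriate $p$-power subgroups gives the desired uniform exponent, with $p^e\geq C_0$ provided $p^f$ is large enough.

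Smith theory (Borel's theorem) shows that each component of $X^{G_0}$ is a $\ZZ/p$-cohomology manifold. The dimension lower bound is obtained by the Smith-theoretic argument appearing in the final step of the proof of Theorem \ref{thm:main-big-prime-p}: one transfers the $G_0$-action to the abelian cover $X_K\to X$ associated with a subgroup $K\leq H^1(X)$ coming from $\rho(G_1)$, and analyzes the Serre spectral sequence of the quotient of the $G_0$-fixed locus in $X_K$ by a finite-index subgroup of $K^*\simeq\ZZ^k$ acting trivially on its mod-$p$ cohomology; the faithful linear action of a $(\ZZ/p^2)^{r-s}$ on the normal bundle at a fixed point then supplies the remaining dimension bound via the embedding into $\GL(n-s,\RR)$, as in Case 2 of the proof of Theorem \ref{thm:main-disc-deg-sym}. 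The main obstacle is the bookkeeping that coordinates the threshold $D$ so that \emph{both} $e_1$ and $e_0'$ exceed $\log_p C_0$: a large $D$ is needed to secure the weak fixed point property for $G_0$, while a small $D$ (relative to $f$) is needed to preserve the exponent of $G_1$; these two constraints are simultaneously satisfiable whenever $p^f$ exceeds an explicit polynomial in $C_0$ and the fixed constants attached to $X$, which furnishes the required $C_1$.
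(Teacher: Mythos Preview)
Your outline follows Case~2 of the proof of Theorem~\ref{thm:main-disc-deg-sym}, which is precisely what the paper intends (it states only that ``the previous arguments also imply'' this theorem). Birkhoff's normal form for $\Ker\rho$, the threshold~$D$ separating the coordinates, the weak fixed point property via Theorem~\ref{thm:fixed-points-kernel-rotation-morphism} combined with \cite[Theorem~1.6]{Mu2022}, and the extraction lemmas \cite[Lemma~2.3]{Mu2021} all match the paper's argument.

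The gap is in your treatment of item~(3), where you conflate two distinct steps. The Smith-theoretic cover argument at the end of the proof of Theorem~\ref{thm:main-big-prime-p} shows that each component $F\subseteq X^{G_0}$ satisfies $\dim F\geq k=\dim T_K\geq s$; for this you must explicitly invoke Theorem~\ref{thm:main-rotation-morphism}, applied to $G_1$ with $C_1=1$ (after extracting the subgroup on which $\rho$ is injective, since your precursor $\{g:g_j=0\text{ for }j\le t\}$ meets $\Ker\rho$ in a group of size $\le p^{(D-1)s}$, which is not a fixed constant). Without that theorem you do not know $H^*(X_K)$ is finitely generated, and the Smith inequality $\dim H^*(X_K^{G_0};\ZZ/p)<\infty$ is unavailable. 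The embedding $G_0\hookrightarrow\GL(n-s,\RR)$ is then a \emph{consequence} of $\dim F\geq s$ (the normal to $F$ has dimension at most $n-s$), and it yields the rank inequality $r-s\leq(n-s)/2$; it does not itself produce a further lower bound on $\dim F$. Your sentence ``the faithful linear action \dots\ supplies the remaining dimension bound'' inverts cause and effect here.
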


Note that properties (2) and (3) imply that $G_0\cap G_1=0$.

\end{document}